\documentclass
{amsart}

\usepackage[latin1]{inputenc}

\usepackage{amsfonts}
\usepackage{latexsym}
\usepackage{color}
\textwidth 495 pt 
\textheight 670pt 
\topmargin0 pt
\oddsidemargin -0.5 pt
\evensidemargin -0.5 pt
\marginparwidth 32 pt
\theoremstyle{plain}
\newtheorem{theorem}[subsection]{Theorem}
\newtheorem{definition}[subsection]{Definition}
\newtheorem{Pro}[subsection]{Proposition}
\newtheorem{remark}[subsection]{Remark}
\newtheorem{Cor}[subsection]{Corollary}

\newtheorem{lemma}[subsection]{Lemma}

\numberwithin{equation}{subsection}
\begin{document}
\author[X. Dom\'{\i}nguez,  V. Tarieladze]{\sc X. Dom\'{\i}nguez, V. Tarieladze}
 \footnotetext{
 \noindent 2000 Mathematics Subject Classification: 22A05, 46A11. \\
 Key words and phrases: topological group, metrizable
group, NSS group, multipliable sequence, summable sequence, topological vector group.\\
 Partially supported by MEC of Spain BFM2006- 
03036 and Gobierno de Navarra; the second named author was supported also by  GNSF/ST08/3-384. }

\title{Metrizable  TAP, HTAP and  STAP groups}

\begin{abstract}
In a recent paper by D. Shakhmatov and  J. Sp\v ev\'ak [Group-valued continuous functions with the topology of pointwise convergence, Topology 
and its Applications (2009), doi:10.1016/j.topol.2009.06.022] the concept of a ${\rm TAP}$ group is introduced and it is shown in particular that ${\rm NSS}$ groups are ${\rm TAP}$. We prove that conversely, Weil complete metrizable ${\rm TAP}$ groups are ${\rm NSS}$. We define also  the narrower class of ${\rm STAP}$ groups,  show that the ${\rm NSS}$ groups are in fact ${\rm STAP}$ and that the converse statement is true in metrizable case. A remarkable characterization of pseudocompact spaces obtained in  the paper by D. Shakhmatov and  J. Sp\v ev\'ak asserts: a Tychonoff space $X$ is pseudocompact if and only if $C_p(X,\mathbb R)$ has the  ${\rm TAP}$ property. We show that for no infinite  Tychonoff space $X$, the group $C_p(X,\mathbb R)$ has the  ${\rm STAP}$ property. We also show that a metrizable locally balanced  topological vector group is ${\rm STAP}$ iff it does not contain a subgroup topologically isomorphic to $\mathbb Z^{(\mathbb N)}$.
\end{abstract}
\maketitle
\section{Introduction}
In \cite[Definition 4.1]{SS} a subset $A$ of of a topological group $G$ is called {\it absolutely productive} in $G$ provided that, for every injection $a:\mathbb N\to A$ and each mapping $z:\mathbb N\to \mathbb Z$ the sequence
 $$
 \left( \prod_{k=1}^n a(k)^{z(k)} \right)_{n\in \mathbb N}
 $$
converges to  some $g\in G$.
\par
In \cite[Definition 4.5]{SS} a topological group $G$ is called ${\rm TAP}$ (an abbreviation for ``trivially absolutely productive") if every absolutely productive set in $G$ is finite.
\par
For a topological group $G$ we call a (not necessarily injective) $a:\mathbb N\to G$
\begin{itemize}
\item[(h-m)]
 {\it hyper-multipliable} in $G$ provided that for each mapping $z:\mathbb N\to \mathbb Z$ the sequence
 $
 \left( \prod_{k=1}^n a(k)^{z(k)} \right)_{n\in \mathbb N}
 $
converges to  some $g\in G$.
\item[(h-c)]
{\it hyper-converging} in $G$ provided that for each mapping $z:\mathbb N\to \mathbb Z$ the sequence $
 \left( a(n)^{z(n)} \right)_{n\in \mathbb N}
 $
converges to  the neutral element $e$ of  $G$.
\end{itemize}

We call a topological group $G$ 
\begin{itemize}
\item[(I)]  ${\rm HTAP}$ (an abbreviation for ``Hyper ${\rm TAP}$") iff no injective $a:\mathbb N\to G$ is  hyper-multipliable in $G$.
\item[(II)]  ${\rm STAP}$ (an abbreviation for ``Strictly ${\rm TAP}$") iff no injective $a:\mathbb N\to G$ is hyper-converging  in $G$.
\end{itemize}
\par
It is clear that if a topological group $G$ is ${\rm STAP}$ then it is ${\rm HTAP}$ as well, while the converse statement may fail even for a precompact countable metrizable abelian group (see Lemma \ref{co3}$(b)$).
\par
It is clear also that if  $G$ is ${\rm HTAP}$ then it is ${\rm TAP}$; the converse is true provided $G$ is {\it abelian} (see Lemma \ref{co3}). We do not know whether a non-abelian  ${\rm TAP}$ group is necessarily ${\rm HTAP}$.  
 
\par
In \cite[Theorem 4.9]{SS} it is proved  that  ${\rm NSS}$ groups are ${\rm TAP}$ and that the converse statement fails in general. We show that ${\rm NSS}$ groups are in fact ${\rm STAP}$ and that the converse statement is true in metrizable case (Theorem \ref{shakh2}). We prove also that for Weil-complete metrizable groups ${\rm NSS}$ and ${\rm TAP}$ properties are equivalent (Theorem \ref{shakh3}). A proof of this result contained in a previous version of this paper (see \cite {tapstap}) is not complete (see Remark \ref{ars-sh}).
\par
A remarkable characterization of pseudocompact spaces obtained in  \cite[Theorem 4.9]{SS} says: a Tychonoff space $X$ is pseudocompact if and only if $C_p(X,\mathbb R)$ has the  ${\rm TAP}$ property. We show that for no  infinite  Tychonoff space $X$, the group $C_p(X,\mathbb R)$ has the  ${\rm STAP}$ property (Theorem \ref{pseudo2m}).
\par
In Section \ref{hyp-hyp} the hyper-multipliable, hyper-converging  and related sequences are discussed. 
\par
Section \ref{stap} deals with general ${\rm TAP}$,  ${\rm STAP}$ and ${\rm NSS}$ groups.
\par
In Section \ref{cepe} the case of $C_p(X)$ is considered.
\par
In   Section \ref{tvp}, along the same lines as \cite{BPR}, we characterize the (complete) metrizable topological vector groups over $\mathbb R$ which are 
({\rm TAP}$)  \,\,  {\rm STAP}$. \\
\\
By $\mathbb N$ we denote the set $\{1,2,\dots\}$ of natural numbers.
\par
As in \cite{SS} $G$ will stand for a Hausdorff topological group with internal operation $\cdot$ and with the neutral element $e$. 
In case of abelian groups with the internal operation $+$, the neutral element will be denoted by $0$.
\par
We denote by $\mathcal N(G)$ the set of all neighborhoods of $e$ in $G$.
\par
We denote by $\langle g\rangle$  the cyclic subgroup of $G$ generated by $g$, for an element $g\in G$.
\par
The group $G$ is a  {\rm{NSS} group if $G$ has an open neighborhood of $e$ containing no nontrivial subgroups of $G$.
\par
For a nonempty family $(G_i)_{i\in I}$ of topological groups the product group $\prod_{i\in I}G_i$ is endowed with the product topology. We write:
$$
{\rm Supp}(g)=\{i\in I:g(i)\ne e_{G_i}\,\},\quad g\in \prod_{i\in \mathbb N}G_i
$$
and
$$
\Pi_{i\in I}^{({\rm r})}G_i=\{g\in \prod_{i\in \mathbb N}G_i: {\rm Card}({\rm Supp}(g))<\infty\}.
$$ 
When $G_i=G$ for every $i\in I$,
$$G^I:=\prod_{i\in I}G_i,\quad G^{(I)}:=\Pi_{i\in I}^{({\rm r})}G_i\,.$$
 
\section{Hyper-multipliable and related sequences.}\label{hyp-hyp}
We call a sequence $(g_n)_{n\in \mathbb N}$ extracted from $G$:
\begin{itemize}
\item
{\it eventually neutral} if the set $\{n \in \mathbb N:g_n\ne e\}$ is {\em finite}.
\item
 {\it hyper-multipliable} in $G$ 
  if for {\em every} sequence $(m_n)_{n\in \mathbb N}$ extracted from $\mathbb Z$ the sequence
 $
 \left( \Pi_{k=1}^n g_k^{m_k} \right)_{n\in \mathbb N}
 $
converges to  some $g\in G$.
\item
 {\it super-multipliable} (or {\it hereditarily multipliable}) in $G$  if for {\em every} sequence $(m_n)_{n\in \mathbb N}$ extracted from $\{0,1\}\subset \mathbb Z$ the sequence
 $
 \left( \Pi_{k=1}^n g_k^{m_k} \right)_{n\in \mathbb N}
 $
 converges to  some $g\in G$.
\item
 {\it hyper-converging} if for {\em every} sequence $(m_n)_{n\in \mathbb N}$ extracted from $\mathbb Z$ the sequence
 $
 \left( g_n^{m_n} \right)_{n\in \mathbb N}
 $
converges to $e$.
\end{itemize}
In case of abelian groups with the internal operation $+$, instead of the terms ``eventually neutral", ``hyper-multipliable", ``super-multipliable",   
we shall use the terms: ``eventually zero", ``hyper-summable", ``super-summable".

\begin{lemma}\label{co1}
Let $(g_n)_{n\in \mathbb N}$ be a sequence extracted from $G$.
\begin{itemize}
\item[(a)]  $(g_n)_{n\in \mathbb N}$ is eventually neutral \,\,$\Longrightarrow$\,\,  $(g_n)_{n\in \mathbb N}$ is hyper-multipliable.
\\ The converse implication fails  e.~g.~
when $G=\prod_{i\in \mathbb N}G_i$, where $(G_i)_{i\in \mathbb N }$ is a sequence of nontrivial Hausdorff topological groups {\em (cf. \cite [Lemma 4.4]{SS})}.
\item[(b)] $(g_n)_{n\in \mathbb N}$ is hyper-multipliable \,\,$\Longrightarrow$\,\,  $(g_n)_{n\in \mathbb N}$ is hyper-converging.\\ 
The converse implication fails  e.g.,
when $H=\prod_{i\in \mathbb N}^{({\rm r})}G_i$ with induced from $\prod_{i\in \mathbb N}G_i$ topology, where $(G_i)_{i\in \mathbb N }$ is a sequence of nontrivial Hausdorff topological groups . 
\\
In particular, 
the converse implication may fail for a countable precompact metrizable abelian group.
\item[(c)]  $(g_n)_{n\in \mathbb N}$ is hyper-multipliable \,\,$\Longrightarrow$\,\, $(g_n)_{n\in \mathbb N}$ is super-multipliable \,\,$\Longrightarrow$ $(g_n)_{n\in \mathbb N}$ is convergent to $e$. The converse implications are not true.

\end{itemize}
\end{lemma}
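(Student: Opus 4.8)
The plan is to dispatch each forward implication with a short direct argument and to settle the three converse statements by exhibiting explicit sequences, almost all of them living inside products or restricted products, where convergence is coordinatewise. For (a), if $(g_n)_{n\in\mathbb N}$ is eventually neutral I choose $N$ with $g_n=e$ for $n>N$; then for every $(m_n)_{n\in\mathbb N}$ extracted from $\mathbb Z$ the partial products $\prod_{k=1}^n g_k^{m_k}$ are constant for $n\ge N$, hence convergent, so $(g_n)_{n\in\mathbb N}$ is hyper-multipliable. To defeat the converse I take $G=\prod_{i\in\mathbb N}G_i$ with each $G_i$ nontrivial, fix $a_i\in G_i\setminus\{e_{G_i}\}$, and let $g_n$ be the element whose $n$-th coordinate is $a_n$ and whose other coordinates are $e$. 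This is not eventually neutral, yet for every $(m_n)_{n\in\mathbb N}$ the partial products converge coordinatewise (in the $j$-th coordinate they stabilize at $a_j^{m_j}$ once $n\ge j$), hence converge in the product topology of $G$.

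For (b) the forward implication is a telescoping computation: writing $s_n=\prod_{k=1}^n g_k^{m_k}$ and $s_0=e$, one has $g_n^{m_n}=s_{n-1}^{-1}s_n$, so if $s_n\to g$ then also $s_{n-1}\to g$, and continuity of the group operations yields $g_n^{m_n}\to g^{-1}g=e$ for every $(m_n)_{n\in\mathbb N}$. For the converse I use $H=\prod_{i\in\mathbb N}^{({\rm r})}G_i$ with the topology induced from $\prod_{i\in\mathbb N}G_i$ and the same coordinate-supported $g_n$, which lies in $H$. The sequence is hyper-converging because $g_n^{m_n}\to e$ coordinatewise (each fixed coordinate is eventually $e$); but for the admissible choice $m_n\equiv 1$ the partial products converge in the full product to the element $(a_j)_{j\in\mathbb N}$ of infinite support, which is not in $H$. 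Since $H$ carries the subspace topology and limits in the Hausdorff group $\prod_{i\in\mathbb N}G_i$ are unique, the partial products have no limit in $H$, so $(g_n)_{n\in\mathbb N}$ is not hyper-multipliable. Taking each $G_i$ a nontrivial finite group, say $G_i=\mathbb Z(2)$, realizes $H$ as a countable dense subgroup of the compact metrizable group $\prod_{i\in\mathbb N}G_i$, hence a countable precompact metrizable abelian group, which gives the final assertion of (b).

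For (c), hyper-multipliability trivially implies super-multipliability because $\{0,1\}\subset\mathbb Z$, and super-multipliability implies convergence to $e$ by applying the telescoping identity of (b) to the admissible constant choice $m_n\equiv 1$, for which the partial products converge. To see that neither converse holds I pass to the additive group $G=\mathbb R$: the sequence $g_k=2^{-k}$ is super-summable, since for $A\subset\mathbb N$ the partial sums $\sum_{k\le n,\,k\in A}2^{-k}$ are increasing and bounded by $1$, but it is not hyper-summable, as the admissible integer choice $m_k=2^k$ gives $\sum_{k=1}^n m_k g_k=n\to\infty$; and the sequence $g_k=1/k$ converges to $0$ yet is not super-summable, since $m_k\equiv 1$ produces the divergent harmonic series.

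The forward implications and the $\mathbb R$-examples are routine. The one point requiring genuine care is the converse in (b): non-convergence in $H$ must be deduced from uniqueness of limits in the ambient product $\prod_{i\in\mathbb N}G_i$ rather than from any intrinsic oscillation of the partial products, and one must simultaneously verify that the finite-group realization is countable, precompact and metrizable.
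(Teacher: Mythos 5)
Your proposal is correct and follows essentially the same route as the paper: the failure of the converses in (a) and (b) is established by exactly the same coordinate-supported sequence in $\prod_{i\in\mathbb N}G_i$ and its restricted product $H$, with the finite-group choice giving the countable precompact metrizable abelian example. The extra material you supply (the telescoping identity for the forward implication in (b), the uniqueness-of-limits argument in $H$, and the $\mathbb R$-examples $2^{-k}$ and $1/k$ for (c)) correctly fills in precisely the steps the paper declares ``evident'' or leaves to the reader.
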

\begin{proof}
\begin{itemize}
\item[(a)] The first part is evident.\\
 Clearly $e:\mathbb N\to \cup_{i\in \mathbb N}G_i$ with $e(i)=e_{G_i},\,i=1,2,\dots$ is the neutral element of $G=\prod_{i\in \mathbb N}G_i$. Now for each $i\in \mathbb N$ choose some $x_i\in G_i\setminus \{e_{G_i}\}$ and define a sequence  $(g_n)_{n\in \mathbb N}$ as follows: for a fixed  $n\in \mathbb N$, set $g_n(n)=x_n$ and $g_n(i)=e_{G_i}$ when $i\in \mathbb N\setminus \{n\}$. Clearly thus obtained sequence $(g_n)_{n\in \mathbb N}$ is not eventually neutral in $G$. Let us see that $(g_n)_{n\in \mathbb N}$ is hyper-multipliable in $G$. In fact, extract from $\mathbb Z$ a sequence $(m_n)_{n\in \mathbb N}$ and consider $g\in G$ with $g(i)=x_i^{m_i},\,\,i=1,2,\dots$ Then the sequence $ \left( \Pi_{k=1}^n g_k^{m_k} \right)_{n\in \mathbb N}$
 converges in $G$ to $g$.
\item[(b)] The first part is evident.\\
    Let $G$ and the sequence $(g_n)_{n\in \mathbb N}$ be as in the proof of (a). 
     Then $g_n\in H,\,n=1,2,\dots$; $(g_n)_{n\in \mathbb N}$ hyper-converges to $e$ in $H$,  but $(g_n)_{n\in \mathbb N}$ is not hyper-multipliable in $H$.\\
If the groups $G_i,\,i=1,2,\dots$ are finite discrete  abelian, then $\prod_{i\in \mathbb N}G_i$ is compact metrizable abelian and hence, $H$ is countable precompact metrizable abelian.
\item[(c)] The proof is left to the reader.
\end{itemize}
\end{proof}
\begin{remark}\label{meore}{\em In connection with  Lemma \ref{co1}$(b)$ we note that in  a sequentially complete abelian locally quasi-convex $G$ all hyper-converging to $e$ sequences are hyper-multipliable (see \cite{dtslender}, where it is shown also that a similar statement may not be true for a complete metrizable abelian non locally quasi-convex $G$). 

}
\end{remark}
\begin{lemma}\label{vigo1}
Let $(g_n)_{n\in \mathbb N}$ be a sequence extracted from $G$.
\begin{itemize}
\item[(a)] If $(g_n)_{n\in \mathbb N}$ is hyper-multipliable in $G$ and $(r_n)_{n\in \mathbb N}$ is {\it a strictly increasing sequence of natural numbers}, then the sequence $(g_{r_n})_{n\in \mathbb N}$ is hyper-multipliable in $G$.
\item[(b)] If $(g_n)_{n\in \mathbb N}$ is super-multipliable in $G$ and $(r_n)_{n\in \mathbb N}$ is {\it a strictly increasing sequence of natural numbers}, then the sequence $(g_{r_n})_{n\in \mathbb N}$ is super-multipliable in $G$.
\end{itemize}
\end{lemma}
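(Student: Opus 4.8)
The plan is to use a ``padding with zeros'' trick that reduces the convergence of the partial products of the subsequence to the convergence of partial products of the original sequence. I will treat (a) and (b) by the same argument, since the only difference between them is whether the exponents are drawn from $\mathbb Z$ or from $\{0,1\}$.

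First I would fix an arbitrary exponent sequence $(\tilde m_n)_{n\in\mathbb N}$ extracted from $\mathbb Z$ (respectively from $\{0,1\}$), since this is what must be controlled to test hyper-multipliability (respectively super-multipliability) of $(g_{r_n})_{n\in\mathbb N}$. The idea is to manufacture an exponent sequence $(m_j)_{j\in\mathbb N}$ for the original sequence $(g_n)_{n\in\mathbb N}$ that agrees with $(\tilde m_n)_{n\in\mathbb N}$ along the indices $r_k$ and vanishes elsewhere: set $m_{r_k}=\tilde m_k$ for each $k$, and $m_j=0$ for every $j$ that is not of the form $r_k$. This is well defined precisely because $(r_n)_{n\in\mathbb N}$ is strictly increasing, hence injective. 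I note that if the $\tilde m_k$ lie in $\{0,1\}$ then so do the $m_j$, which is exactly what lets the same construction serve part (b).

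By hypothesis $(g_n)_{n\in\mathbb N}$ is hyper-multipliable (respectively super-multipliable), so the sequence $\left(\prod_{j=1}^N g_j^{m_j}\right)_{N\in\mathbb N}$ converges to some $g\in G$. The key computation is that, for each $n$, the $r_n$-th partial product collapses onto the one I actually want:
$$\prod_{j=1}^{r_n} g_j^{m_j} = \prod_{k=1}^{n} g_{r_k}^{\tilde m_k},$$
because every factor with index $j\notin\{r_1,\dots,r_n\}$ equals $e$ (its exponent being $0$), while the surviving factors occur in the increasing order $r_1<\dots<r_n$. The one place demanding a little care is the non-abelian setting, but deleting trivial factors never alters a product and the surviving factors retain their original relative order, so the displayed identity holds verbatim.

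Finally, $\left(\prod_{j=1}^{r_n} g_j^{m_j}\right)_{n\in\mathbb N}$ is a subsequence of the convergent sequence $\left(\prod_{j=1}^{N} g_j^{m_j}\right)_{N\in\mathbb N}$, hence converges to the same limit $g$; combined with the displayed identity this shows that $\left(\prod_{k=1}^{n} g_{r_k}^{\tilde m_k}\right)_{n\in\mathbb N}$ converges to $g$. As $(\tilde m_n)_{n\in\mathbb N}$ was arbitrary, $(g_{r_n})_{n\in\mathbb N}$ is hyper-multipliable (respectively super-multipliable). I do not expect a genuine obstacle here; the only subtlety is the order-of-multiplication bookkeeping in the non-abelian case, and the zero-padding construction disposes of it automatically.
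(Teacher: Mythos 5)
Your proof is correct and follows essentially the same argument as the paper: the zero-padding construction (with the roles of $m$ and $\tilde m$ swapped in notation), the identity $\prod_{j=1}^{r_n} g_j^{m_j} = \prod_{k=1}^{n} g_{r_k}^{\tilde m_k}$, and passage to a subsequence of a convergent sequence. Your explicit observation that the padded exponents stay in $\{0,1\}$ is exactly the reason the paper can dismiss part (b) with ``can be proved in a similar way.''
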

\begin{proof}
$(a)$ In fact, fix a sequence $m:\mathbb N\to \mathbb Z$ and define $\tilde{m}:\mathbb N\to \mathbb Z$ 
as follows:
$\tilde{m}_{r_n}=m_n,\,n=1,2,\dots$ and $\tilde{m}_j=0,\forall j\in \mathbb N\setminus\{r_1,r_2,\dots\}$. Then we will have:
$$
\Pi_{k=1}^{r_n} g_k^{\tilde{m}_k}=\Pi_{k=1}^{n} g_{r_k}^{m_k},\,\,n=1,2,\dots
$$
Since $(g_n)_{n\in \mathbb N}$ is hyper-multipliable in $G$, the sequence $\left( \Pi_{k=1}^{r_n} g_k^{\tilde{m}_k}\right)_{n\in \mathbb N}$ converges to some $g\in G$. From this and the above equality we get that the sequence  $\left(\Pi_{k=1}^{n} g_{r_k}^{m_k} \right)_{n\in \mathbb N}$ converges to $g\in G$ as well. Since $m:\mathbb N\to \mathbb Z$ is arbitrary, we proved that the sequence $(g_{r_n})_{n\in \mathbb N}$ is hyper-multipliable in $G$.
\par
$(b)$ can be proved in a similar way.
\end{proof}
\begin{lemma}\label{vigo2}
Let $G$ be {\em abelian} and $(g_n)_{n\in \mathbb N}$ be a sequence extracted from $G$.
\begin{itemize}
\item[(a)] If  $(g_n)_{n\in \mathbb N}$ is super-multipliable in $G$ and $\pi:\mathbb N\to \mathbb N$ is a bijection, then   the sequence 
 $$
 \left( \Pi_{k=1}^n g_{\pi(k)} \right)_{n\in \mathbb N}
 $$
 converges to  some $g_{\pi}\in G$ (moreover, then $g_{\pi}$ does not depend on $\pi$).

\item[(b)] If  $(g_n)_{n\in \mathbb N}$ is hyper-multipliable in $G$ and $(k_n)_{n\in \mathbb N}$ is {\it an injective  sequence of natural numbers}, then the sequence $(g_{k_n})_{n\in \mathbb N}$ is hyper-multipliable in $G$.

\end{itemize}

\end{lemma}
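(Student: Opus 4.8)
The plan is to isolate one Cauchy-type tail estimate for super-multipliable sequences and to deduce both parts from it. \emph{Key estimate:} if $(g_n)_{n\in\mathbb N}$ is super-multipliable in the abelian group $G$, then for every $U\in\mathcal N(G)$ there is an $N\in\mathbb N$ such that $\Pi_{k\in F}g_k\in U$ for \emph{every} finite $F\subseteq\{k\in\mathbb N:k>N\}$. I would prove this by contradiction: if it failed for some $U$, then I could construct pairwise disjoint finite blocks $F_1,F_2,\dots$ with $\max F_i<\min F_{i+1}$ and $\Pi_{k\in F_i}g_k\notin U$. Putting $A=\bigcup_iF_i$ and applying super-multipliability to the $\{0,1\}$-sequence that is the indicator of $A$, the partial products $s_m=\Pi_{k\le m,\,k\in A}g_k$ converge, hence form a Cauchy sequence. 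Since the blocks are separated and $\min F_i\to\infty$, for large $i$ one has $\Pi_{k\in F_i}g_k=s_{\max F_i}\,s_{\min F_i-1}^{-1}$, which must then lie in $U$, contradicting the choice of $F_i$. This estimate is where the real work sits; everything else is bookkeeping.

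For $(a)$ I would first record that the identity-ordered product $\Pi_{k=1}^ng_k$ converges to some $g\in G$ (take all exponents equal to $1$). Given $U\in\mathcal N(G)$, choose a symmetric $V\in\mathcal N(G)$ with $VV\subseteq U$, apply the key estimate to get $N$ for $V$, and enlarge $N$ so that also $\Pi_{j=1}^ng_j\in gV$ for $n\ge N$. For a bijection $\pi$ pick $M$ with $\{1,\dots,N\}\subseteq\{\pi(1),\dots,\pi(M)\}$. Then for $n\ge M$ the index set $\{\pi(1),\dots,\pi(n)\}$ splits as $\{1,\dots,N\}\sqcup B$ with $B\subseteq\{k\in\mathbb N:k>N\}$, so by commutativity $\Pi_{k=1}^ng_{\pi(k)}=\bigl(\Pi_{j=1}^Ng_j\bigr)\bigl(\Pi_{k\in B}g_k\bigr)\in gV\cdot V\subseteq gU$. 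Hence $\Pi_{k=1}^ng_{\pi(k)}\to g$; this simultaneously yields convergence and shows the limit is the $\pi$-independent value $g$. Note that no completeness of $G$ is needed, since the limit $g$ is supplied in advance by the identity ordering.

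For $(b)$, fix a map $w:\mathbb N\to\mathbb Z$. Define $z:\mathbb N\to\mathbb Z$ by $z(k_n)=w(n)$ and $z(j)=0$ for $j\notin\{k_1,k_2,\dots\}$, and put $h_k=g_k^{z(k)}$. The sequence $(h_k)_{k\in\mathbb N}$ is super-multipliable: for every sequence $(\mu_k)$ in $\{0,1\}$ one has $\Pi_{k=1}^nh_k^{\mu_k}=\Pi_{k=1}^ng_k^{z(k)\mu_k}$, which converges because $z(k)\mu_k\in\mathbb Z$ and $(g_n)$ is hyper-multipliable. Let $K=\{k_1,k_2,\dots\}$ with its increasing enumeration; by Lemma \ref{vigo1}$(b)$ the corresponding subsequence of $(h_k)$ is again super-multipliable, and $(k_n)_{n\in\mathbb N}$ is merely a reordering of that enumeration by a bijection of $\mathbb N$. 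Applying part $(a)$ to this super-multipliable subsequence and that bijection shows $\Pi_{j=1}^ng_{k_j}^{w(j)}=\Pi_{j=1}^nh_{k_j}$ converges. Since $w$ was arbitrary, $(g_{k_n})_{n\in\mathbb N}$ is hyper-multipliable. Thus the whole lemma reduces to the key tail estimate, which remains the only genuine obstacle.
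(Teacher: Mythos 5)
Your proposal is correct and follows essentially the same route as the paper: your ``key estimate'' is exactly the paper's notion of a \emph{pre-multipliable} sequence (Lemma \ref{alo5}, proved there by the same disjoint-block/Cauchy contradiction), and your part $(b)$ mirrors the paper's reduction via the increasing rearrangement, Lemma \ref{vigo1}, and part $(a)$. The only difference is cosmetic: for $(a)$ the paper factors the argument through the notion of a family multipliable in the unordered (finite-subset) sense (Lemmas \ref{alo4} and \ref{alo2} of the Appendix), whereas you fuse those two steps into one direct $VV\subseteq U$ estimate, bypassing that intermediate definition.
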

\begin{proof}
$(a)$ will be proved in the appendix.\\
$(b)$ Fix a sequence $m:\mathbb N\to \mathbb Z$, find a bijection $\sigma:\mathbb N\to \mathbb N$ such that the sequence $r_n:=k_{\sigma(n)},\,n=1,2,\dots$ is strictly increasing and define   $m':\mathbb N\to \mathbb Z$ 
as follows: $m'_n=m_{\sigma(n)},\,n=1,2,\dots$. Then, by Lemma \ref{vigo1}$(a)$ the sequence $(g_{r_n})_{n\in \mathbb N}$ is hyper-multipliable in $G$. This implies that the sequence $(g_{r_n}^{m'_n})_{n\in \mathbb N}$ is hyper-multipliable in $G$ too. In particular, the sequence $(g_{r_n}^{m'_n})_{n\in \mathbb N}$ is super-multipliable in $G$.  Write: $h_n:= g_{r_n}^{m'_n},\,n=1,2,\dots$ Since the sequence $(h_n)_{n\in\mathbb N}$ is super-multipliable in $G$, according to $(a)$, the sequence 
$$
\left( \Pi_{j=1}^n h_{\sigma^{-1}(j)} \right)_{n\in \mathbb N}
$$
converges to some $h\in G$. Since
$$
h_{\sigma^{-1}(j)}=g_{k_j}^{m_j},\,j=1,2,\dots\,,
$$
we obtained that the sequence
$$
\left( \Pi_{j=1}^n  g_{k_j}^{m_j}\right)_{n\in \mathbb N}
$$
converges to  $h\in G$. Since $m:\mathbb N\to \mathbb Z$ is arbitrary, we proved that the sequence $(g_{k_n})_{n\in \mathbb N}$ is hyper-multipliable in $G$.
\end{proof}
\begin{remark}\label{preg}{\em We do not know whether Lemma \ref{vigo2}$(a)$ (of course without its `moreover' part) remains true for a non-abelian $G$ as well.
}
\end{remark}

\begin{Pro}\label{agv14}
Let $(D_i)_{i\in \mathbb N}$ be a sequence of  non-trivial discrete groups. Put $G=\prod_{i\in \mathbb N}D_i$ and let $H$ be $\prod_{i\in \mathbb N}^{({\rm r})}D_i$ with the topology induced by $G$.
 Then the complete metrizable non-discrete  topological group $G$  and the non-complete  metrizable non-discrete  topological group $H$ have the following properties: 
\begin{itemize}
\item[(1)] {\em Every}
sequence of elements of $G$ which converges to $e$ is hyper-multipliable in $G$.
In particular, {\em every} sequence of elements of $G$ which converges to $e$ is hyper-converging.
\item[(2)] If a sequence  $(h_n)_{n\in \mathbb N}$ of elements of $H$ is super-multipliable in $H$, then $(h_n)_{n\in \mathbb N}$ is a eventually neutral  sequence.
\item[(3)] If a sequence  $(h_n)_{n\in \mathbb N}$ of elements of $H$ is hyper-multipliable  in $H$, then $(h_n)_{n\in \mathbb N}$ is a eventually neutral  sequence.
\end{itemize}
\end{Pro}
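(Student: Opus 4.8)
The plan is to reduce all three parts to the coordinatewise description of the topology. Since each $D_{i}$ is discrete, a sequence in $G=\prod_{i\in\mathbb N}D_{i}$ converges to an element precisely when it converges in each coordinate, and convergence in $H$ means convergence in $G$ to a point that happens to lie in $H$ (recall $H$ carries the induced topology and is a subgroup, since supports of products and inverses stay finite). In particular $g_{n}\to e$ in $G$ iff for every $i$ one has $g_{n}(i)=e_{D_{i}}$ for all large $n$ (the threshold depending on $i$), and the group operation is coordinatewise, so $\bigl(\Pi_{k=1}^{n}g_{k}^{m_{k}}\bigr)(i)=\Pi_{k=1}^{n}g_{k}(i)^{m_{k}}$, the right-hand product taken in $D_{i}$. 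For $(1)$, fix $g_{n}\to e$ and any $m:\mathbb N\to\mathbb Z$. For each $i$ pick $N_{i}$ with $g_{k}(i)=e_{D_{i}}$ for $k>N_{i}$; then in $D_{i}$ every factor $g_{k}(i)^{m_{k}}$ with $k>N_{i}$ equals $e_{D_{i}}$, so the partial product $\Pi_{k=1}^{n}g_{k}(i)^{m_{k}}$ is constant for $n\ge N_{i}$. Thus each coordinate of the partial products is eventually constant, so they converge coordinatewise, i.e.\ converge in $G$; this gives hyper-multipliability, and the ``in particular'' clause is then Lemma \ref{co1}$(b)$. The argument is insensitive to non-commutativity because in every coordinate only finitely many factors differ from $e_{D_{i}}$.

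For $(2)$ I argue by contraposition. Suppose $(h_{n})$ is super-multipliable in $H$ while $S:=\{n:h_{n}\ne e\}$ is infinite. Super-multipliability forces $h_{n}\to e$: taking $m_{n}\equiv1$ makes $Q_{n}:=\Pi_{k\le n}h_{k}$ convergent, whence $h_{n}=Q_{n-1}^{-1}Q_{n}\to e$ (alternatively invoke Lemma \ref{co1}$(c)$). So the coordinatewise description applies. I then build inductively a strictly increasing $n_{1}<n_{2}<\cdots$ in $S$ whose supports lie in disjoint blocks: having chosen $n_{1},\dots,n_{j}$, set $M_{j}=\max{\rm Supp}(h_{n_{j}})$; since $h_{n}\to e$ there is $N_{j}$ with $h_{n}(i)=e_{D_{i}}$ for all $i\le M_{j}$ whenever $n>N_{j}$, and as $S$ is infinite I may pick $n_{j+1}\in S$ with $n_{j+1}>N_{j}$. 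Then ${\rm Supp}(h_{n_{j+1}})\cap\{1,\dots,M_{j}\}=\emptyset$ while ${\rm Supp}(h_{n_{j+1}})\ne\emptyset$, so the sets ${\rm Supp}(h_{n_{j}})$ are pairwise disjoint and nonempty.

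Now define $m_{n}=1$ for $n\in\{n_{1},n_{2},\dots\}$ and $m_{n}=0$ otherwise. By disjointness of the chosen supports, in each coordinate $i$ at most one factor of $\Pi_{k\le N}h_{k}^{m_{k}}$ is nontrivial, so each coordinate is eventually constant and the partial products converge in $G$ to the element $g$ given by $g(i)=h_{n_{j}}(i)$ when $i\in{\rm Supp}(h_{n_{j}})$ and $g(i)=e_{D_{i}}$ otherwise. But ${\rm Supp}(g)=\bigcup_{j}{\rm Supp}(h_{n_{j}})$ is infinite, so $g\notin H$; since $G$ is Hausdorff the limit is unique, hence the partial products do not converge in $H$, contradicting super-multipliability. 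Thus $S$ is finite, proving $(2)$. Part $(3)$ is then immediate: by Lemma \ref{co1}$(c)$ a hyper-multipliable sequence is super-multipliable, so $(2)$ applies. (The asserted metrizability, completeness of $G$, and non-discreteness are standard, while $H$ is a dense proper subgroup of $G$ and hence metrizable, non-discrete and non-complete.)

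The main obstacle is the inductive block-disjoint construction in $(2)$: one must force the surviving coordinates to accumulate so that the limit acquires infinite support and thereby escapes $H$. The disjointness achieved there does double duty, since it simultaneously guarantees coordinatewise convergence of the partial products and removes any interference coming from non-commutativity of the $D_{i}$.
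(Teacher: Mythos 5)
Your proof is correct and follows essentially the same route as the paper's: a coordinatewise/open-subgroup argument for (1), the same disjoint-support subsequence construction with a $\{0,1\}$-valued exponent sequence whose partial products converge in $G$ to an element of infinite support (hence outside $H$) for (2), and the reduction of (3) to (2) via Lemma \ref{co1}$(c)$. The only cosmetic differences are that you verify convergence coordinate-by-coordinate instead of citing sequential completeness of $G$, and that you make explicit the step (left implicit in the paper) that super-multipliability forces $h_n\to e$.
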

\begin{proof}\begin{itemize}
\item[(1)] Observe that, since $D_{i},\,i=1,2,\dots$ are discrete, the open subgroups
$$
U_m=\{g\in G: g(i)=e_{D_i},\,i=1,\dots,m\},\,\,m=1,2,\dots
$$
form a basis for $\mathcal N(G)$. From this observation and from the sequential completeness of $G$ it is standard to deduce that (1) is true.\item[(2)] Fix a not eventually neutral  sequence $(h_n)_{n\in \mathbb N}$ of elements of $H$, which converges to $e$ in $H$ and let us see that 
 $(h_n)_{n\in \mathbb N}$ is not super-multipliable in $H$.
 
 We can assume without loss of generality that 
 \begin{equation}\label{15.0}
 {\rm Supp}(h_n)\ne \emptyset,\,\,n=1,2,\dots
 \end{equation}
 A little reflection shows that since $(h_n)_{n\in \mathbb N}$ tends to $e$ in $H$ {\it and the groups $D_i$'s are discrete}, we have
 \begin{equation}\label{15.1}
 \min\left( {\rm Supp}(h_n) \right)\longrightarrow \infty\,\,(n\to \infty)\,.
  \end{equation}
  Construct now a strictly increasing sequence $(q_n)_{n\in \mathbb N}$ of natural numbers as follows.\\
  Take $q_1=1$. From (\ref{15.1}) we can find $q_2\in \mathbb N$ such that $q_2>q_1$ and
  $$
  \min\left( {\rm Supp}(h_{q_2})\right)>\max\left( {\rm Supp}(h_{q_1})\right)
  $$
  Continuing in this manner, we obtain a strictly increasing sequence $(q_n)_{n\in \mathbb N}$ of natural numbers such that
  $$
  \min\left( {\rm Supp}(h_{q_{n+1}})\right)>\max\left( {\rm Supp}(h_{q_n})\right),\,\,n=1,2,\dots
  $$
  From the last relation we get
  \begin{equation}\label{15.2}
  n',n''\in \mathbb N,\,n'\ne n''\,\Longrightarrow\,{\rm Supp}(h_{q_{n'}})\cap {\rm Supp}(h_{q_{n''}})=\emptyset\,.
  \end{equation}
  Now, since the sequence $(h_{q_n})_{n\in \mathbb N}$ tends to $e$ in $H$, it tends to $e$  in $G$ too. Then, according to (1) there is $g\in G$ such that
  $$
  \lim_n\prod_{k=1}^nh_{q_k}=g\,
  $$
  From (\ref{15.2}) it follows that
  \begin{equation}\label{15.3}
    {\rm Supp}(h_{q_n})\subset {\rm Supp}(g),\,\,n=1,2,\dots
  \end{equation}
    From (\ref{15.2}) and (\ref{15.0}) we conclude that $ {\rm Supp}(g)$ {\em is not} a finite set. Consequently, $g\not\in H$ and $(h_n)_{n\in \mathbb N}$ is not super-multipliable in $H$.\\
   \item[(3)] follows from (2) by the first implication of Lemma \ref{co1}$(c)$.   
   \end{itemize}
\end{proof}

\section{Absolutely productive sets}
In terms of hyper-multipliable sequences the definition of an absolutely productive set can be formulated as follows.

\begin{definition}\label{abspro} {\em (cf.~\cite[Definition 4.1]{SS}) A subset $A$ of $G$ is {\it absolutely productive in} $G$ if every sequence $(g_n)_{n\in \mathbb N}$ {\em of pairwise distinct elements} extracted from $A$ is hyper-multipliable in $G$.
}
\end{definition}

\begin{lemma}\label{amoc} Let $(g_n)_{n\in \mathbb N}$ be a sequence of pairwise distinct elements of $G$ and $A:=\{g_1,g_2,\dots\}$. Consider the statements:
\begin{itemize}
\item[(ap1)] $A$ is absolutely productive in $G$.
\item[(ap2)] For every injection $\sigma:\mathbb N\to \mathbb N$ the sequence $(g_{\sigma(n)})_{n\in \mathbb N}$ is hyper-multipliable in $G$.
\item[(ap3)] For every bijection $\sigma:\mathbb N\to \mathbb N$ the sequence $(g_{\sigma(n)})_{n\in \mathbb N}$ is hyper-multipliable in $G$.
\item[(ap4)] $(g_n)_{n\in \mathbb N}$ is hyper-multipliable in $G$.
\end{itemize}
Then the following statements are valid:
\begin{itemize}
\item[(a)]
 (ap1), (ap2) and (ap3) {\it are equivalent and they imply} (ap4).
\item[(b)]  {\em If $G$ is abelian}, then (ap1), (ap2), (ap3)  and (ap4)  are equivalent.
 \end{itemize}
 
\end{lemma}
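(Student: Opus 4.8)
The plan is to establish the cycle of equivalences in (a) through the implications (ap1)$\Leftrightarrow$(ap2), then (ap2)$\Rightarrow$(ap3)$\Rightarrow$(ap4), with the only substantial step being (ap3)$\Rightarrow$(ap2); part (b) will then follow by adjoining the single implication (ap4)$\Rightarrow$(ap2). First I would note that (ap1)$\Leftrightarrow$(ap2) holds essentially by definition: since the $g_n$ are pairwise distinct, a sequence of pairwise distinct elements extracted from $A=\{g_1,g_2,\dots\}$ is precisely one of the form $(g_{\sigma(n)})_{n\in\mathbb N}$ with $\sigma:\mathbb N\to\mathbb N$ injective, so Definition \ref{abspro} reads verbatim as (ap2). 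The implication (ap2)$\Rightarrow$(ap3) is trivial because every bijection is an injection, and either of (ap2), (ap3) yields (ap4) by taking $\sigma=\mathrm{id}_{\mathbb N}$.

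The heart of part (a) is (ap3)$\Rightarrow$(ap2). Given an injection $\sigma:\mathbb N\to\mathbb N$ with image $S:=\sigma(\mathbb N)$, I would build a bijection $\rho:\mathbb N\to\mathbb N$ along which the values $\sigma(1),\sigma(2),\dots$ occur in this order as a subsequence, with the remaining indices $\mathbb N\setminus S$ inserted in between: if $\mathbb N\setminus S$ is infinite one interleaves, setting $\rho(2n-1)=\sigma(n)$ and $\rho(2n)=t_n$ where $\{t_1<t_2<\dots\}=\mathbb N\setminus S$, while if it is finite one lists the finitely many missing indices first and then $\sigma(1),\sigma(2),\dots$ afterwards. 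Fixing an arbitrary $m:\mathbb N\to\mathbb Z$, define $\tilde m:\mathbb N\to\mathbb Z$ by assigning to the position carrying $\sigma(j)$ the exponent $m_j$ and to every position carrying an index of $\mathbb N\setminus S$ the exponent $0$. The crucial point, and the step that makes the argument work even without commutativity, is that inserting factors $g^{0}=e$ does not change a product regardless of where they sit, so for each $J$ there is $N(J)$ with $\prod_{k=1}^{N(J)}g_{\rho(k)}^{\tilde m_k}=\prod_{j=1}^{J}g_{\sigma(j)}^{m_j}$. By (ap3) the sequence $(g_{\rho(n)})_{n\in\mathbb N}$ is hyper-multipliable, so $(\prod_{k=1}^{N}g_{\rho(k)}^{\tilde m_k})_{N\in\mathbb N}$ converges; the partial products of $(g_{\sigma(n)})$ with exponents $m$ form a subsequence of this convergent sequence and hence converge as well. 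As $m$ was arbitrary, $(g_{\sigma(n)})$ is hyper-multipliable, which is (ap2).

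For part (b) it remains to prove (ap4)$\Rightarrow$(ap2) under the assumption that $G$ is abelian, and this is exactly Lemma \ref{vigo2}(b): if $(g_n)_{n\in\mathbb N}$ is hyper-multipliable and $\sigma:\mathbb N\to\mathbb N$ is injective, then $(g_{\sigma(n)})_{n\in\mathbb N}$ is hyper-multipliable. Combined with part (a) this closes the chain and yields the equivalence of (ap1)--(ap4). I expect the main obstacle to lie entirely in the non-abelian bookkeeping of (ap3)$\Rightarrow$(ap2): one must check carefully that the padding map $\rho$ really is a bijection in both the finite and infinite cases and that the $\sigma$-partial products appear exactly as a subsequence of the $\rho$-partial products. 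The commutative case (b), by contrast, is immediate once Lemma \ref{vigo2}(b) is invoked.
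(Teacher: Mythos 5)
Your proposal is correct and follows essentially the same route as the paper: the equivalence (ap1)$\Leftrightarrow$(ap2) by definition, the reduction of (ap3) to the injective case by building an auxiliary bijection and padding with exponent $0$ so that the injection's partial products appear as a subsequence of the bijection's partial products (the paper fixes the complementary positions and reorders within the increasing enumeration $r_n=k_{\sigma(n)}$, you interleave, but the zero-padding idea is identical and equally valid in the non-abelian setting), and part (b) via Lemma \ref{vigo2}(b). No gaps.
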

\begin{proof}
$(a)$\\
The equivalence (ap1)$\Longleftrightarrow$ (ap2) follows directly from the definitions.\\
The implication (ap2)$\Longrightarrow$ (ap3) is evident.\\ 
(ap3)$\Longrightarrow$ (ap1).\\
Consider an injection $a:\mathbb N\to A$,  a mapping $z:\mathbb N\to \mathbb Z$ and let us see that  the sequence
\begin{equation}\label{bolo2}
 \left( \Pi_{k=1}^n a(k)^{z(k)} \right)_{n\in \mathbb N}
 \end{equation}
converges to  some $g\in G$.
\par
For every $n\in \mathbb N$ we can find and fix some $k_n\in \mathbb N$ such that $a(n)=g_{k_n}$. Since $a$ is injective, thus obtained sequence $(k_n)_{n\in \mathbb N}$ is injective as well.\\
Find a bijection $\sigma:\mathbb N\to \mathbb N$ such that the sequence $r_n:=k_{\sigma(n)},\,n=1,2,\dots$ is strictly increasing.\\
Define now the mappings $\tilde \sigma:\mathbb N\to \mathbb N$ and $\tilde m:\mathbb N\to \mathbb Z$ as follows:\\
$\tilde\sigma(r_n)=k_n,\,n=1,2,\dots,\,\,\,\tilde \sigma(j)=j,\,\forall j\in \mathbb N\setminus \{r_1,r_2,\dots\}$;\\
$\tilde m_{r_n}=z(n),\,n=1,2,\dots,\,\,\,\tilde m_j=0,\,\forall j\in\mathbb N\setminus \{r_1,r_2,\dots\}$.
\par
Clearly, $\tilde \sigma$ is a bijection. Hence, since (ap3) is satisfied, we have that the sequence 
$$
\left( \prod_{i=1}^ng_{\tilde\sigma(i)} ^{\tilde m_i}\right)_{n\in \mathbb N}
$$
converges to some $g\in G$. Hence,
$$
\lim_n \left( \prod_{i=1}^{r_n}g_{\tilde\sigma(i)} ^{\tilde m_i}\right)=g\,.
$$
Observe that
$$
\prod_{i=1}^{r_n}g_{\tilde\sigma(i)} ^{\tilde m_i}=\prod_{k=1}^n a(k)^{z(k)},\,\,n=1,2,\dots
$$
The last two equalities imply that
$$
\lim_n\left(\prod_{k=1}^n a(k)^{z(k)}\right)=\lim_n \left( \prod_{i=1}^{r_n}g_{\tilde\sigma(i)} ^{\tilde m_i}\right)=g\,.
$$
Therefore, the sequence (\ref{bolo2}) converges to $g$.
 Since the injection $a:\mathbb N\to A$ is arbitrary and the mapping   $z:\mathbb N\to \mathbb Z$ is arbitrary too, we proved that $A$ is absolutely productive in $G$.\\
The implication (ap3)$\Longrightarrow$ (ap4) is evident.
\par
$(b)$ Since $(a)$ is proved, we need to prove only that (ap4) implies (ap2). Since $G$ is abelian, this implication is true by Lemma \ref{vigo2}$(b)$.
\end{proof}

\section{${\rm TAP}$ and ${\rm STAP}$ groups: the first observations}\label{stap}

Note that, according to Definition \ref{abspro}, every {\it finite} subset $A$ of $G$ is { absolutely productive in} $G$.
\begin{definition}{\em (\cite[Definition 4.5]{SS}) We say that  $G$ is $\rm{TAP}$ (an abbreviation for ``trivially absolutely productive") if every absolutely productive set in $G$ is finite.
 }
\end{definition}
Motivated from  the concept of a  $\rm{TAP}$ group, we propose the following definitions.

\begin{definition}{\em  We say that  $G$ is 
\begin{itemize}
\item[(I)]  ${\rm HTAP}$ (an abbreviation for ``Hyper ${\rm TAP}$") if no sequence {\em of pairwise distinct elements} extracted from $G$ is hyper-multipliable.
 \item[(II)]  ${\rm STAP}$ (an abbreviation for ``Strictly ${\rm TAP}$")  no sequence $(g_n)_{n\in \mathbb N}$ {\em of pairwise distinct elements} extracted from $G$ is hyper-convergent  in $G$.
\end{itemize}
}
\end{definition}
The following lemmas are easy to prove.
\begin{lemma}\label{co20}
For a topological group $G$ the following conditions are equivalent:
\begin{itemize}
\item[(i)] $G$ is ${\rm HTAP}$.
\item[(ii)] Every sequence extracted from $G$ which is hyper-multipliable  in  $G$ is eventually neutral.
\end{itemize}
\end{lemma}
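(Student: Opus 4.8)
The plan is to prove the two implications separately, treating (i)$\Rightarrow$(ii) by contraposition and (ii)$\Rightarrow$(i) directly.

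For (ii)$\Rightarrow$(i), I would argue as follows. Suppose every hyper-multipliable sequence is eventually neutral, and let $(g_n)_{n\in\mathbb N}$ be any sequence of pairwise distinct elements that happens to be hyper-multipliable. By (ii) it is eventually neutral, so $g_n=e$ for all but finitely many $n$; in particular $g_n=e$ holds for infinitely many indices. But pairwise distinctness allows $g_n=e$ for at most one $n$, a contradiction. Hence no sequence of pairwise distinct elements can be hyper-multipliable, i.e.\ $G$ is ${\rm HTAP}$.

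For (i)$\Rightarrow$(ii), I would assume $G$ is ${\rm HTAP}$ and take a hyper-multipliable sequence $(g_n)_{n\in\mathbb N}$ that is \emph{not} eventually neutral, aiming at a contradiction. The key idea is to manufacture from $(g_n)$ a strictly increasing subsequence consisting of \emph{pairwise distinct} terms: Lemma \ref{vigo1}$(a)$ then guarantees this subsequence is again hyper-multipliable, and being indexed by pairwise distinct elements it contradicts the ${\rm HTAP}$ property.

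The main obstacle is precisely this extraction, since a priori the non-neutral terms of $(g_n)$ could all coincide. Here is how I would dispose of that difficulty. By Lemma \ref{co1}$(c)$ the sequence $(g_n)$ converges to $e$. Since $(g_n)$ is not eventually neutral, the set $S=\{n\in\mathbb N:g_n\neq e\}$ is infinite. I then claim that each value $x\neq e$ can occur only finitely often among the $g_n$: if some $x\neq e$ occurred infinitely often, the corresponding constant subsequence would converge to $x$ and, being a subsequence of $(g_n)$, also to $e$, contradicting the Hausdorff property of $G$. Consequently $\{g_n:n\in S\}$ is an infinite set, and a routine greedy construction yields indices $r_1<r_2<\cdots$ in $S$ with $g_{r_1},g_{r_2},\dots$ pairwise distinct. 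Applying Lemma \ref{vigo1}$(a)$ to the strictly increasing $(r_n)_{n\in\mathbb N}$ produces the desired hyper-multipliable sequence of pairwise distinct elements, completing the contradiction and hence the proof.
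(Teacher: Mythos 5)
Your proof is correct. There is nothing in the paper to compare it against: the authors state this lemma without proof, under the remark ``The following lemmas are easy to prove.'' Your (ii)$\Rightarrow$(i) direction is the trivial counting argument (an infinite sequence of pairwise distinct elements cannot take the value $e$ more than once, hence cannot be eventually neutral), and in (i)$\Rightarrow$(ii) you correctly isolate the one non-obvious point: a hyper-multipliable, not eventually neutral sequence need not be injective, so an injective hyper-multipliable subsequence must be manufactured before the ${\rm HTAP}$ hypothesis can be invoked. Your resolution of this is sound and uses only the paper's own tools in the right order: Lemma \ref{co1}$(c)$ gives $g_n\to e$; Hausdorffness then forbids any value $x\neq e$ from recurring infinitely often; hence the set of non-neutral values is infinite and a greedy choice produces strictly increasing indices $r_1<r_2<\cdots$ with $g_{r_1},g_{r_2},\dots$ pairwise distinct; and Lemma \ref{vigo1}$(a)$ --- which requires exactly a strictly increasing index sequence, as you have arranged --- preserves hyper-multipliability along this subsequence, giving the desired contradiction with ${\rm HTAP}$.
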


\begin{lemma}\label{co2}
For a topological group $G$ the following conditions are equivalent:
\begin{itemize}
\item[(i)] $G$ is ${\rm STAP}$.
\item[(ii)] Every hyper-convergent  sequence extracted from $G$ is eventually neutral. 
\end{itemize}
\end{lemma}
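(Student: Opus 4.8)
The plan is to unwind both conditions and reduce everything to the elementary observation that a hyper-converging sequence automatically converges to $e$ (just take the constant sequence $m_n\equiv 1$), so that in a Hausdorff group no non-neutral value can be repeated infinitely often along such a sequence. With that observation in hand, both implications become short.

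For the implication (ii)$\Rightarrow$(i) I would argue by contradiction. Suppose $G$ were not ${\rm STAP}$, so that some sequence $(g_n)_{n\in\mathbb N}$ of pairwise distinct elements is hyper-converging in $G$. By (ii) this sequence would have to be eventually neutral, i.e.\ $g_n=e$ for all $n$ beyond some index $N$; but then $g_N=g_{N+1}=e$, contradicting that the $g_n$ are pairwise distinct. Hence no hyper-converging sequence of pairwise distinct elements can exist, which is exactly the assertion that $G$ is ${\rm STAP}$.

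For the converse (i)$\Rightarrow$(ii) let $(g_n)_{n\in\mathbb N}$ be hyper-converging and assume, for contradiction, that it is \emph{not} eventually neutral, so that $\{n:g_n\ne e\}$ is infinite. Taking $m_n\equiv 1$ shows that $(g_n)_{n\in\mathbb N}$ converges to $e$; since $G$ is Hausdorff, any fixed value $x\ne e$ can occur only finitely often in the sequence (otherwise a constant subsequence would converge to $x\ne e$ while converging to $e$). Consequently the set of distinct non-neutral values $\{g_n:g_n\ne e\}$ is infinite, and a straightforward greedy selection yields a strictly increasing sequence of indices $(n_k)_{k\in\mathbb N}$ along which the $g_{n_k}$ are non-neutral and pairwise distinct.

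It then remains to verify that this subsequence is still hyper-converging, which will contradict (i). Given any $(\mu_k)_{k\in\mathbb N}$ extracted from $\mathbb Z$, define $\tilde m_j=\mu_k$ when $j=n_k$ and $\tilde m_j=0$ otherwise; then $(g_j^{\tilde m_j})_{j\in\mathbb N}$ converges to $e$ by the hyper-convergence of $(g_n)_{n\in\mathbb N}$, and $(g_{n_k}^{\mu_k})_{k\in\mathbb N}$, being a subsequence of it, converges to $e$ as well. Thus $(g_{n_k})_{k\in\mathbb N}$ is a hyper-converging sequence of pairwise distinct elements, contradicting that $G$ is ${\rm STAP}$. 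I expect the only genuinely delicate point to be this last stability of hyper-convergence under passage to subsequences, combined with the repeated-value argument that guarantees infinitely many \emph{distinct} non-neutral terms; the rest is a direct translation of the definitions, which is why the statement is flagged as easy.
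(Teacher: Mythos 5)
Your proof is correct. The paper gives no proof of Lemma \ref{co2} --- it is introduced only with ``The following lemmas are easy to prove'' --- and your argument supplies exactly the details left to the reader: the direction (ii)$\Rightarrow$(i) is immediate from pairwise distinctness, while in (i)$\Rightarrow$(ii) you correctly isolate the two genuine points, namely that Hausdorffness forces each non-neutral value to occur only finitely often (so a pairwise distinct non-neutral subsequence can be extracted from a hyper-converging sequence that is not eventually neutral), and that hyper-convergence is inherited by such subsequences via zero-padding of the exponent sequence, the same device the paper itself uses in the proof of Lemma \ref{vigo1}.
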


The following lemma contains, in particular, some examples of ${\rm TAP}$, ${\rm HTAP}$ and ${\rm STAP}$ groups.

\begin{lemma}\label{co3}
We have:
\begin{itemize}
\item[(a)] $G\in {\rm STAP}\Longrightarrow G\in {\rm HTAP}\Longrightarrow G\in {\rm TAP}$.
\item[(a-bis)] If $G$ is {\em abelian} then $G\in {\rm HTAP}\Longleftrightarrow G\in {\rm TAP}$.
\item[(b)] Let $(D_i)_{i\in \mathbb N}$ be a sequence of  non-trivial discrete groups and   $H$ be $\prod_{i\in \mathbb N}^{({\rm r})}D_i$ with the topology induced by $\prod_{i\in \mathbb N}D_i$. Then  $H\in {\rm HTAP}$, but  $H\not\in {\rm STAP}$.\\
In particular, the implication $H\in {\rm TAP}\Longrightarrow H\in {\rm STAP}$ may fail even for an abelian countable metrizable precompact  $H$.
\end{itemize}
\end{lemma}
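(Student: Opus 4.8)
The plan is to derive all three parts from results already established earlier in the paper, so the work is mostly a matter of assembling the right implications. For part (a) I would handle the two arrows separately. For ${\rm STAP}\Rightarrow{\rm HTAP}$, suppose toward a contradiction that some sequence $(g_n)_{n\in\mathbb N}$ of pairwise distinct elements is hyper-multipliable; by Lemma \ref{co1}(b) it is then hyper-converging, contradicting the ${\rm STAP}$ hypothesis. Hence no sequence of pairwise distinct elements is hyper-multipliable, i.e.\ $G\in{\rm HTAP}$. For ${\rm HTAP}\Rightarrow{\rm TAP}$ I would argue contrapositively: if $G\notin{\rm TAP}$ there is an infinite absolutely productive set $A$, and extracting from $A$ any sequence $(g_n)_{n\in\mathbb N}$ of pairwise distinct elements, Definition \ref{abspro} tells us this sequence is hyper-multipliable, contradicting ${\rm HTAP}$.

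For part (a-bis) the implication ${\rm HTAP}\Rightarrow{\rm TAP}$ is already contained in (a), so only ${\rm TAP}\Rightarrow{\rm HTAP}$ for abelian $G$ requires argument. Here I would invoke Lemma \ref{amoc}(b): if $G$ fails to be ${\rm HTAP}$, there is a sequence $(g_n)_{n\in\mathbb N}$ of pairwise distinct elements that is hyper-multipliable, which is precisely statement (ap4) for the set $A=\{g_1,g_2,\dots\}$. Since $G$ is abelian, (ap4) implies (ap1), so $A$ is an infinite absolutely productive set and $G\notin{\rm TAP}$; contraposition yields the claim.

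For part (b), the assertion $H\in{\rm HTAP}$ is immediate from Proposition \ref{agv14}(3) combined with Lemma \ref{co20}: the proposition states that every hyper-multipliable sequence in $H$ is eventually neutral, which is exactly condition (ii) of Lemma \ref{co20}. To see $H\notin{\rm STAP}$ I would reuse the sequence appearing in the proof of Lemma \ref{co1}(b): fixing $x_i\in D_i\setminus\{e_{D_i}\}$ and letting $g_n$ be supported at the single coordinate $n$ with value $x_n$, the elements $g_n$ lie in $H$, are pairwise distinct because their supports differ, and the sequence hyper-converges to $e$ while clearly not being eventually neutral; by Lemma \ref{co2} this shows $H\notin{\rm STAP}$. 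Finally, taking each $D_i$ to be finite discrete abelian makes $\prod_{i\in\mathbb N}D_i$ compact metrizable abelian and $H$ a countable dense subgroup, hence a countable precompact metrizable abelian group; since $H\in{\rm HTAP}$ it is ${\rm TAP}$ by (a), which furnishes the promised example where ${\rm TAP}$ holds but ${\rm STAP}$ fails.

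I expect no genuine obstacle, as each part collapses to an already-proven statement. The only point demanding care is the bookkeeping that translates the ``pairwise distinct'' formulations in the definitions of ${\rm HTAP}$ and ${\rm STAP}$ into the ``eventually neutral'' characterizations of Lemmas \ref{co20} and \ref{co2} and into Proposition \ref{agv14}(3); in particular, in part (b) one must verify explicitly that the witnessing sequence consists of pairwise distinct elements of $H$ and is not eventually neutral, so that it genuinely certifies the failure of ${\rm STAP}$.
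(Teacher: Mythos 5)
Your proposal is correct and follows essentially the same route as the paper: Lemma \ref{co1}(b) plus the definitions for part (a), Lemma \ref{amoc}(b) for part (a-bis), and Proposition \ref{agv14}(3) for $H\in{\rm HTAP}$ in part (b). The only cosmetic difference is that for $H\notin{\rm STAP}$ you exhibit the explicit coordinatewise-supported sequence from the proof of Lemma \ref{co1}, whereas the paper takes an arbitrary non-eventually-neutral sequence converging to $e$ (available since $H$ is metrizable and non-discrete) and applies Proposition \ref{agv14}(1); both arguments rest on the same facts and are sound.
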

\begin{proof}
(a). The implication $G\in {\rm STAP}\Longrightarrow G\in {\rm HTAP}$ is true in view of Lemma \ref{co1}$(b)$.\\
The implication $G\in {\rm HTAP}\Longrightarrow G\in {\rm TAP}$ follows directly from the definitions.
\\
(a-bis) is true because of Lemma \ref{amoc}$(b)$.\\
(b) $H\in {\rm HTAP}$ by Proposition \ref{agv14}(3). $H\not\in {\rm STAP}$ because, since $H$ is metrizable and non-discrete, we can extract from $H$ a not eventually neutral sequence which converges to $e$ and  by Proposition \ref{agv14}(1) every such sequence hyper-converges to $e$.
\par
It follows that if the groups $D_i,i=1,2,\dots$ are finite and abelian, then $H$ is an example  of an  abelian countable metrizable precompact group, for which 
the implication $H\in {\rm TAP}\Longrightarrow H\in {\rm STAP}$ fails.
\end{proof}

We shall see below that the ${\rm TAP}$ and the ${\rm STAP}$ properties are equivalent for a {\it metrizable} Weil complete group (see Theorem \ref{shakh3}).

\section{  ${\rm NSS}$, ${\rm STAP}$ and ${\rm TAP}$ groups}\label{stap2}
\begin{theorem}\label{shakh}
We have: 
\begin{itemize}
\item[(a)] $G\in {\rm NSS}\Longrightarrow G\in {\rm STAP}$ {\em (see Theorem \ref{shakh2} for a converse)}.
\item[(b)] {\em (\cite[Theorem 4.9]{SS})} $G\in {\rm NSS}\Longrightarrow G\in {\rm TAP}$.
\item[(c)] In general, $G\in {\rm HTAP}\not\Longrightarrow G\in {\rm NSS}$ even for an abelian countable metrizable precompact $G$.
\end{itemize}
\end{theorem}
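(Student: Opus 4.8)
The plan is to handle the three parts separately, putting essentially all the work into part~(a); parts~(b) and~(c) will then fall out of part~(a) together with the lemmas already proved. For part~(a) I would argue through Lemma~\ref{co2}, so that it suffices to show that every hyper-converging sequence in an ${\rm NSS}$ group $G$ is eventually neutral. Let $U$ be an open neighborhood of $e$ containing no nontrivial subgroup of $G$, and suppose toward a contradiction that $(g_n)_{n\in\mathbb N}$ is hyper-converging but \emph{not} eventually neutral, so $g_n\ne e$ for infinitely many $n$. The crucial point is that for each such $n$ the cyclic subgroup $\langle g_n\rangle$ is nontrivial, whence by the ${\rm NSS}$ property $\langle g_n\rangle\not\subset U$, so there is an integer $m_n$ with $g_n^{m_n}\notin U$; for the remaining indices (those with $g_n=e$) put $m_n=0$. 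With this single choice of $(m_n)_{n\in\mathbb N}$ the sequence $(g_n^{m_n})_{n\in\mathbb N}$ has infinitely many terms outside $U$ and therefore does not converge to $e$, contradicting hyper-convergence. Hence $G$ is ${\rm STAP}$.

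Part~(b) I would not prove from scratch: combining part~(a) with the implications ${\rm STAP}\Rightarrow{\rm HTAP}\Rightarrow{\rm TAP}$ of Lemma~\ref{co3}$(a)$ gives ${\rm NSS}\Rightarrow{\rm STAP}\Rightarrow{\rm TAP}$ at once. For part~(c) I would exhibit a concrete counterexample from the groups already studied: take $(D_i)_{i\in\mathbb N}$ a sequence of nontrivial \emph{finite} abelian discrete groups and let $H=\prod_{i\in\mathbb N}^{({\rm r})}D_i$ carry the topology induced from $\prod_{i\in\mathbb N}D_i$. By Lemma~\ref{co3}$(b)$ this $H$ is an abelian countable metrizable precompact group in ${\rm HTAP}$, so it only remains to check that $H\notin{\rm NSS}$. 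This is immediate: every neighborhood of $e$ in $H$ contains one of the sets $U_m\cap H=\{g\in H:g(i)=e_{D_i},\ i=1,\dots,m\}$, each of which is itself a nontrivial subgroup of $H$ (nontrivial because $D_{m+1}\ne\{e_{D_{m+1}}\}$). Thus no neighborhood of $e$ avoids nontrivial subgroups, and ${\rm HTAP}\not\Rightarrow{\rm NSS}$.

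The only delicate point is the escape step in part~(a). Hyper-convergence asserts that $(g_n^{m_n})_{n\in\mathbb N}$ tends to $e$ for \emph{every} integer sequence $(m_n)$, so to refute it one needs only a \emph{single} bad sequence, and that such a sequence is available is exactly what the ${\rm NSS}$ hypothesis buys us: the distinguished neighborhood $U$ cannot contain any of the nontrivial cyclic groups $\langle g_n\rangle$, so an escaping power $g_n^{m_n}\notin U$ always exists. Once this is understood, parts~(b) and~(c) involve no new difficulty, being mere bookkeeping on top of Lemma~\ref{co3}.
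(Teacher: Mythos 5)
Your proposal is correct and follows essentially the same route as the paper: part (a) via Lemma \ref{co2} with the escaping-power argument (the NSS neighborhood $U$ cannot contain the nontrivial cyclic group $\langle g_n\rangle$, so some $g_n^{m_n}\notin U$), part (b) by combining (a) with Lemma \ref{co3}$(a)$, and part (c) via the same example $H=\prod_{i\in\mathbb N}^{({\rm r})}D_i$ with finite nontrivial abelian $D_i$, which fails NSS because the basic neighborhoods $U_m\cap H$ are themselves nontrivial subgroups. The only cosmetic difference is that the paper phrases (a) for a sequence of pairwise distinct elements with $g_n\ne e$ assumed WLOG, whereas you set $m_n=0$ at indices where $g_n=e$; these are equivalent.
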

\begin{proof}
\begin{itemize}
\item[(a)] Fix a sequence  $(g_n)_{n\in \mathbb N}$ {\em of pairwise distinct elements} extracted from $G$. By Lemma \ref{co2} (ii)$\Longrightarrow$(i) it is sufficient to show that $(g_n)_{n\in \mathbb N}$ is not hyper-convergent. 
 We can assume without loss of generality that $g_n\ne e,\,n=1,2,\dots$
Since $G$ is NSS, we can find and fix an open symmetric neighborhood $U$ of $e$ containing no nontrivial subgroups of $G$. Fix $n\in \mathbb N$. Since
$g_n\ne e$, the cyclic group $\langle g_n\rangle$ is non-trivial, so, $\langle g_n\rangle\not\subset U$; hence there exists $m_n\in \mathbb Z$ such that $g_n^{m_n}\not\in U$. Consequenty, we have constructed a sequence $(m_n)_{n\in \mathbb N}$ such that
$$
g_n^{m_n}\not\in U,\,\,n=1,2,\dots
$$
This means that the sequence $(g_n)_{n\in \mathbb N}$ is not hyper-convergent to $e$.
\item[(b)] follows from (a) by Lemma \ref{co3}$(a)$.
\item[(c)] Let $(D_i)_{i\in I}$ be a sequence of  non-trivial finite discrete abelian groups and   $H$ be $\prod_{i\in \mathbb N}^{({\rm r})}D_i$ with the topology induced by $\prod_{i\in \mathbb N}D_i$. Then $H$ is an  abelian countable metrizable precompact group, $H\in {\rm HTAP}$ (by Proposition \ref{agv14}(3)), but  $H\not\in {\rm NSS}$ (because $\mathcal N(H)$  has a countable basis consisting of open subgroups).
\end{itemize}
 \end{proof}
\begin{lemma}\label{co14.1}
If  $G\not\in {\rm NSS}$ and $d:G\times G\to \mathbb R_+$ is a continuous mapping with $d(e,e)=0$, then there exists a sequence $(g_n)_{n\in \mathbb N}$ of elements of $G\setminus\{e\}$ such that 
 \
\begin{equation}\label{bepe}
\sum_{n\in \mathbb N}d(e, g_n^{m_n})<\infty,\quad \forall (m_n)_{n\in \mathbb N}\in \mathbb Z^\mathbb N\,.
\end{equation}

\end{lemma}
\begin{proof}
 Write:
$$
U_n=\{g\in G: d(e,g)<\frac{1}{2^n}\},\,\,n=1,2,\dots
$$
Since $d$ is continuous on $G\times G$, we have
$$
U_n\in \mathcal N(G),\,\,n=1,2,\dots
$$
Since $G\not\in {\rm NSS}$, there is a sequence $(H_n)_{n\in \mathbb N}$ of {\it non-trivial} subgroups of $G$ such that
$$
H_n\subset U_n,\,\,\,n=1,2,\dots
$$
Hence there exists a sequence $(g_n)_{n\in \mathbb N}$ of elements of $G$ such that
\begin{equation}\label{bepe1}
g_n\ne e,\quad g_n\in H_n\subset U_n,\,\,\,n=1,2,\dots
\end{equation}
Fix a sequence $(m_n)_{n\in \mathbb N}$ of integers. From (\ref {bepe1}), as $H_n,\,n=1,2,\dots$ are subgroups of $G$, we get:
\begin{equation}\label{bepe2}
 g_n^{m_n}\in H_n\subset U_n,\,\,\,n=1,2,\dots
\end{equation}
Hence,
\begin{equation}\label{bepe3}
d(e, g_n^{m_n})<\frac{1}{2^n},\,\,n=1,2,\dots
\end{equation}
From (\ref {bepe3}) it follows that (\ref {bepe}) is true.
\end{proof}
From Theorem \ref{shakh}(a) and Lemma \ref{co14.1} we deduce:
\begin{theorem}\label{shakh2}
For a metrizable group $G$ TFAE:
\begin{itemize}
\item[(i)] $G\in {\rm NSS}$.
\item[(ii)] $ G\in {\rm STAP}$.
\end{itemize}
\end{theorem}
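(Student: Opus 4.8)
The plan is to prove the two implications separately, observing at once that (i)$\Rightarrow$(ii) is already in hand and uses no metrizability, so that all the real work lies in the converse.

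The implication (i)$\Rightarrow$(ii) is exactly Theorem \ref{shakh}(a), namely $G\in{\rm NSS}\Rightarrow G\in{\rm STAP}$, and can be quoted verbatim. For (ii)$\Rightarrow$(i) I would argue by contraposition: assuming $G\notin{\rm NSS}$, I will exhibit a sequence witnessing $G\notin{\rm STAP}$. Since $G$ is metrizable, the first step is to fix a metric $d$ on $G$ compatible with its topology; then $d\colon G\times G\to\mathbb R_+$ is continuous and $d(e,e)=0$, so the hypotheses of Lemma \ref{co14.1} are satisfied. Applying that lemma produces a sequence $(g_n)_{n\in\mathbb N}$ of elements of $G\setminus\{e\}$ with $\sum_{n}d(e,g_n^{m_n})<\infty$ for every $(m_n)_{n\in\mathbb N}\in\mathbb Z^{\mathbb N}$.

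The key observation is then that convergence of the series forces its general term to vanish: for each fixed $(m_n)$ one has $d(e,g_n^{m_n})\to 0$, and since $d$ generates the topology of $G$ this says precisely that $g_n^{m_n}\to e$. As $(m_n)$ was arbitrary, the sequence $(g_n)$ is hyper-converging to $e$; moreover every $g_n\ne e$, so it is not eventually neutral. By the equivalence recorded in Lemma \ref{co2}, the existence of a hyper-convergent sequence that is not eventually neutral means exactly $G\notin{\rm STAP}$, which finishes the contrapositive.

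The genuine content of the argument is entirely absorbed into Lemma \ref{co14.1}, which turns the failure of ${\rm NSS}$ into nontrivial elements sitting inside arbitrarily small subgroups, so that all their integer powers remain small simultaneously; once that lemma is available, the deduction above is routine. I therefore do not anticipate any real obstacle beyond two bookkeeping points: checking that an arbitrary compatible metric is a legitimate input for Lemma \ref{co14.1}, and invoking Lemma \ref{co2} so that I need only a \emph{not eventually neutral} hyper-convergent sequence rather than one with pairwise distinct terms.
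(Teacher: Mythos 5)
Your proposal is correct and follows essentially the same route as the paper: quote Theorem \ref{shakh}(a) for (i)$\Rightarrow$(ii), and for the contrapositive of (ii)$\Rightarrow$(i) feed a compatible metric into Lemma \ref{co14.1}, note that summability of $\sum_n d(e,g_n^{m_n})$ forces $g_n^{m_n}\to e$ for every choice of $(m_n)$, and conclude via Lemma \ref{co2} that the resulting not eventually neutral hyper-convergent sequence witnesses $G\notin{\rm STAP}$. This matches the paper's own proof step for step, including the reduction to Lemma \ref{co2} that lets you work with a not eventually neutral sequence rather than an injective one.
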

\begin{proof}
$(i)\Longrightarrow (ii)$ by Theorem \ref{shakh}$(a)$.\\
$(ii)\Longrightarrow (i)$. Suppose that $G\not \in {\rm NSS}$. Take some metric $d$ which metrizes  $G$. Then by Lemma \ref{co14.1} there exists a not eventually neutral sequence   $(g_n)_{n\in \mathbb N}$ of elements of $G$ for which (\ref {bepe}) is satisfied for  $d$.
Since $d$ metrizes  $G$, from (\ref {bepe}) it follows that $(g_n)_{n\in \mathbb N}$ hyper-converges to $e$. Consequently, $G\not\in {\rm STAP}$.
 \end{proof}
We will derive the following result from  Theorem \ref{shakh2} and Lemma \ref{co14.1}.

\begin{theorem}\label{shakh3}
For a {\em Weil-complete} metrizable group $G$ TFAE:
\begin{itemize}
\item[(i)] $G\in {\rm NSS}$.
\item[(ii)] $G\in {\rm STAP}$.
\item[(iii)] $G\in {\rm HTAP}$.
\item[(iv)] $ G\in {\rm TAP}$.
\end{itemize}
\end{theorem}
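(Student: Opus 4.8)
The plan is to close the cycle of implications using results already in hand, so that almost nothing new has to be proved. By Theorem \ref{shakh2} we have $(i)\Leftrightarrow(ii)$ for every metrizable $G$, and by Lemma \ref{co3}$(a)$ we have $(ii)\Rightarrow(iii)\Rightarrow(iv)$; none of these use completeness. Hence the entire content of the theorem is the single remaining implication $(iv)\Rightarrow(i)$, which I would establish in contrapositive form: assuming $G\not\in{\rm NSS}$, I will exhibit an \emph{infinite} absolutely productive set in $G$, so that $G\not\in{\rm TAP}$. This is the only place where Weil-completeness is used.

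First I would fix a left-invariant metric $d$ inducing the topology of $G$ (available by Birkhoff--Kakutani), so that $d(x,y)=d(e,x^{-1}y)$ and $(G,d)$ is complete precisely because $G$ is Weil-complete. Regarding $d$ as a continuous map $G\times G\to\mathbb{R}_+$ with $d(e,e)=0$, Lemma \ref{co14.1} produces a sequence $(g_n)_{n\in\mathbb N}$ in $G\setminus\{e\}$ with $\sum_n d(e,g_n^{m_n})<\infty$ for every $(m_n)\in\mathbb{Z}^{\mathbb N}$; in fact its proof yields the uniform estimate $d(e,g_n^{m_n})<2^{-n}$, valid for all $m_n\in\mathbb Z$.

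The key step is to show that $(g_n)$ is hyper-multipliable and remains so after any injective reindexing. For a fixed $(m_n)$ put $P_n=\prod_{k=1}^n g_k^{m_k}$; using left-invariance together with the triangle inequality repeatedly gives $d(P_n,P_{n+p})=d\bigl(e,\prod_{j=n+1}^{n+p}g_j^{m_j}\bigr)\le\sum_{j=n+1}^{n+p}d(e,g_j^{m_j})$, which is a tail of a convergent series and therefore tends to $0$. Thus $(P_n)$ is $d$-Cauchy, and by Weil-completeness it converges, so $(g_n)$ is hyper-multipliable. For an injection $\sigma:\mathbb N\to\mathbb N$ the same estimate applies to $\prod_{k=1}^n g_{\sigma(k)}^{m_k}$, the relevant series being controlled by $\sum_j d(e,g_{\sigma(j)}^{m_j})\le\sum_j 2^{-\sigma(j)}<\infty$ (or, avoiding the uniform bound, by applying Lemma \ref{co14.1} to the exponent sequence equal to $m_{\sigma^{-1}(i)}$ on the range of $\sigma$ and $0$ elsewhere). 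Hence every injective reindexing of $(g_n)$ is hyper-multipliable; note that abelianness is never used, since only $d(e,\cdot)$ of the factors enters the bound.

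Finally, since $g_n\ne e$ and $d(e,g_n)<2^{-n}\to 0$, the $g_n$ converge to $e$ and assume infinitely many distinct values; passing to a subsequence I may therefore assume the $g_n$ pairwise distinct, and since any injective reindexing of a subsequence is an injective reindexing of $(g_n)$, the property just established persists. Thus $A=\{g_n:n\in\mathbb N\}$ is infinite and satisfies condition (ap2) of Lemma \ref{amoc}, whence by part $(a)$ of that lemma $A$ is absolutely productive, so $G\not\in{\rm TAP}$, which is the contrapositive of $(iv)\Rightarrow(i)$. I expect the only genuine subtlety to be the Cauchy estimate via left-invariance and the recognition that Weil-completeness, not mere metrizability, is exactly what converts Cauchy partial products into an actual limit; this is consistent with Lemma \ref{co3}$(b)$, where the non-complete group $\prod_{i}^{({\rm r})}D_i$ is ${\rm HTAP}$ but not ${\rm STAP}$. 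The extraction of a distinct subsequence and the injection bookkeeping are routine.
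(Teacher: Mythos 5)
Your proposal is correct and follows essentially the same route as the paper: both reduce everything to $(iv)\Rightarrow(i)$ in contrapositive, invoke Lemma \ref{co14.1} with a left-invariant (Birkhoff--Kakutani) metric, and convert the summability estimate into $d$-Cauchy partial products via left-invariance, with Weil-completeness supplying the limit. The only differences are organizational: the paper verifies absolute productivity of the range directly from the definition (handling the injection bookkeeping with an extended exponent sequence, exactly as in your parenthetical alternative), whereas you first prove hyper-multipliability of all injective reindexings and then cite Lemma \ref{amoc}$(a)$ -- a purely cosmetic repackaging.
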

\begin{proof}
The equivalence (i)$\Longleftrightarrow$(ii) follows from Theorem \ref{shakh2}.\\
The implications (ii)$\Longrightarrow$(iii)$\Longrightarrow$ (iv) is evident.\\
 (iv)$\Longrightarrow$(i).\\
Suppose that $G\not\in {\rm NSS}$. Take some {\it left invariant} metric $d$ which metrizes  $G$
 (such a metric exists by Birkhoff-Kakutani's theorem). Since $G\not\in {\rm NSS}$, by Lemma \ref{co14.1} there exists a {\it not eventually neutral sequence} $(g_n)_{n\in \mathbb N}$ of elements of $G$ satisfying (\ref{bepe}). Clearly,  the range $A:=\{g\in G: \exists n\in \mathbb N,\,g=g_n\}$ of $(g_n)_{n\in \mathbb N}$ is infinite.
 \par
 Now we will show that the infinite  set $A$ is absolutely productive in $G$ and this will show that $G\not\in  {\rm TAP}$.
 \par 
  Consider an injection $a:\mathbb N\to A$,  a mapping $z:\mathbb N\to \mathbb Z$ and let us see that  the sequence
 $$
 \left( \Pi_{k=1}^n a(k)^{z(k)} \right)_{n\in \mathbb N}
 $$
converges to  some $g\in G$.
\par
For every $n\in \mathbb N$ we can find and fix some $k_n\in \mathbb N$ such that $a(n)=g_{k_n}$. Since $a$ is injective, thus obtained sequence $(k_n)_{n\in \mathbb N}$ consists of pairwise distinct natural numbers. Let $(m_n)_{n\in \mathbb N}$ be a sequence of integers defined as follows:
$m_{k_n}=z(n),\,n=1,2,\dots$ and $m_j=0,\forall j\in \mathbb N\setminus\{k_1,k_2,\dots\}$.
\par
Observe that
$$
\sum_{j\in \mathbb N}d(e, g_j^{m_j})=\sum_{n\in \mathbb N}d(e, a(n)^{z(n)})\,.
$$
From this and (\ref{bepe}) we get:
\begin{equation}\label{14gan1}
\sum_{n\in \mathbb N}d(e, a(n)^{z(n)})<\infty.
\end{equation}
  
 Write:
$$
b_n=\prod_{k=1}^n a(k)^{z(k)},\,n=1,2,\dots
$$
Since $d$ is left-invariant, we have:
$$
d(b_{n-1},b_n)=d(b_{n-1},b_{n-1}  a(n)^{z(n)})=d(e, a(n)^{z(n)}),\,\,n=2,3,\dots
$$
This and (\ref{14gan1}) imply:
\begin{equation}\label{16.1}
\sum_{n=2}^{\infty}d(b_{n-1},b_n)<\infty
\end{equation}
From (\ref{16.1}) we get that $(b_n)_{n\in \mathbb N}$ is a $d$-Cauchy sequence. From this, since $d$ is left-invariant and metrizes $G$, we get:
\begin{equation}\label{16.2}
\forall V\in\mathcal N(G)\,\exists n_V\in \mathbb N,\,\,[n',n''\in \mathbb N,\,\min(n',n'')\ge n_V\,\Longrightarrow\,b_{n'}^{-1}b_{n''}\in V]\,.
\end{equation}
From (\ref{16.2}) we conclude that $(b_n)_{n\in \mathbb N}$ is a Cauchy sequence in the left uniformity of $G$. From this, since $G$ is Weil complete, we get that 
$(b_n)_{n\in \mathbb N}$ converges to some $g\in G$. Since $a:\mathbb N\to A$ and  $z:\mathbb N\to \mathbb Z$ are arbitrary, we have proved that 
the infinite set $A$ is absolutely productive in $G$.\end{proof}
\begin{remark}\label{ars-sh}{\em (1) The validitity of the implication $(iv)\Longrightarrow (i)$ of Theorem \ref{shakh3} is stated in \cite {tapstap},  however it is proved therein  only the weaker implication $(iii)\Longrightarrow (i)$. This gap was observed and was pointed out to us by professor D. Dikranjan. A different proof of the implication\\
$G$ is Weil complete metrizable \,$+$\,$G\in {\rm TAP}$\,$\Longrightarrow$\,$G\in {\rm NSS}$\\
is contained in the proof of \cite[Corollary 4.6]{DSS}.
\par 
 (2)
The implication $G\in {\rm HTAP}\Longrightarrow G\in {\rm NSS}$ of Theorem \ref{shakh3} fails drastically for a non-complete metrizable abelian $G$ (see Theorem \ref{shakh}(c)), as well as for a {\it Raikov complete} metrizable $G$ 
\cite[Remark 5.9]{DSS}.
\par
(3) In \cite[Theorem 9.3]{DSS} it is proved
that a $\sigma$-compact complete abelian TAP group need not be NSS. Therefore, the implication $(iv)\Longrightarrow (i)$ of Theorem \ref{shakh3} may fail without the metrizability assumption.
}
\end{remark}
\par
From Theorem \ref{shakh3}, since locally compact groups are Weil-complete, we get
\begin{Cor}\label{shakh4}
For a {\em locally compact} metrizable group $G$ we have that $G\in {\rm NSS}\Longleftrightarrow G\in {\rm TAP}$.
\end{Cor}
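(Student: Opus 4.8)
The plan is to observe that this Corollary is an immediate consequence of Theorem \ref{shakh3}, whose only hypothesis beyond metrizability is Weil-completeness. Thus the entire task reduces to verifying that a locally compact (Hausdorff) group is Weil-complete, i.e.\ complete in its left uniformity; once this is in hand, the equivalence $(i)\Longleftrightarrow(iv)$ of Theorem \ref{shakh3} yields exactly ${\rm NSS}\Longleftrightarrow{\rm TAP}$, and metrizability is assumed outright.

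To establish Weil-completeness, first I would fix a compact neighborhood $U\in\mathcal N(G)$, which exists by local compactness, and take an arbitrary left-Cauchy net $(x_\alpha)$ in $G$; recall that left-Cauchy means that for each $V\in\mathcal N(G)$ one has $x_\alpha^{-1}x_\beta\in V$ eventually. Applying this with $V=U$ produces an index $\alpha_0$ such that $x_{\alpha_0}^{-1}x_\beta\in U$ for all $\beta\ge\alpha_0$, whence $x_\beta\in x_{\alpha_0}U$ for every such $\beta$. Since $x_{\alpha_0}U$ is compact, the tail of the net lies in a compact set and therefore admits a convergent subnet.

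Finally I would invoke the standard fact that a Cauchy net possessing a convergent subnet is itself convergent to the same limit, which applies here because the left uniformity is a genuine uniformity. This shows $(x_\alpha)$ converges, so $G$ is Weil-complete, and Theorem \ref{shakh3} then finishes the proof. I do not expect a serious obstacle: the metrizability is inherited trivially and the only real content is the compact-neighborhood argument above, which is entirely classical. The one point demanding slight care is the direction of the uniformity, namely checking that the left-Cauchy condition indeed traps the tail inside a \emph{left} translate $x_{\alpha_0}U$ rather than a right translate; this is resolved simply by keeping the convention $x^{-1}y\in V$ for the left uniformity consistent throughout.
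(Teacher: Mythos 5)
Your proposal is correct and follows exactly the paper's route: the paper also deduces the Corollary from Theorem \ref{shakh3} by citing the fact that locally compact groups are Weil-complete, which it states without proof. Your elaboration of that classical fact (trapping the tail of a left-Cauchy net in the compact set $x_{\alpha_0}U$ and using that a Cauchy net with a convergent subnet converges) is a sound filling-in of the detail the paper omits.
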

\begin{remark}\label{no-mert}{\em
Corollary \ref{shakh4} remains true without metrizability:   it has been proved recently in  \cite[Theorem 10.8]{DSS} that a locally compact $\rm{TAP}$ group is $\rm{NSS}$.
}

\end{remark}
 

\section{$C_p(X)$ and ${\rm STAP}$}\label{cepe}

In this subsection $X$ will be a Tychonoff space and $C_p(X,G)$ will stand for the group of all continuous mappings $f:X\to G$ endowed with the topology of pointwise convergence.
\begin{theorem}\label{pseudo}{\em \cite[Theorem 5.3]{SS}}
A space $X$ is pseudocompact if and only if $C_p(X,\mathbb R)$ has the ${\rm TAP}$ property.
\end{theorem}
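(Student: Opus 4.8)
The plan is to exploit that $C_p(X,\mathbb R)$ is abelian, so by Lemma \ref{co3}(a-bis) together with Lemma \ref{co20} the ${\rm TAP}$ property is equivalent to the statement that \emph{every hyper-summable sequence $(f_n)_{n\in\mathbb N}$ in $C_p(X,\mathbb R)$ is eventually zero} (recall that convergence in $C_p(X,\mathbb R)$ means pointwise convergence to a \emph{continuous} limit). First I would record the basic ``point-finiteness'' observation: if $(f_n)_{n\in\mathbb N}$ is hyper-summable, then for every $x\in X$ the set $\{n:f_n(x)\neq 0\}$ is finite. Indeed, hyper-summability forces $m_nf_n(x)\to 0$ for each $x$ and every $(m_n)_{n\in\mathbb N}\in\mathbb Z^{\mathbb N}$, which is impossible if $f_n(x)\neq 0$ for infinitely many $n$ (choose $m_n$ with $|m_nf_n(x)|\ge 1$). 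Thus each point lies in only finitely many cozero sets $\mathrm{coz}(f_n)$, and at every $x$ the sum $\sum_n m_nf_n(x)$ is a genuine finite sum.

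For the implication $X$ \emph{not} pseudocompact $\Longrightarrow$ $C_p(X,\mathbb R)\notin{\rm TAP}$, I would start from an unbounded $h\in C(X)$ and choose pairwise disjoint bounded intervals $(a_n,b_n)$ escaping to $\infty$ and meeting values of $|h|$, forming the nonempty cozero sets $U_n=\{x:a_n<|h(x)|<b_n\}$. These are \emph{locally finite}: a point $y$ has a neighbourhood $h^{-1}(J)$, with $J$ a small interval around $|h(y)|$, meeting only finitely many $(a_n,b_n)$. Picking $f_n\in C(X)$ with $\mathrm{coz}(f_n)=U_n$, local finiteness makes every integer combination $\sum_n m_nf_n$ continuous (locally a finite sum) while the partial sums stabilise at each point, so $(f_n)_{n\in\mathbb N}$ is hyper-summable and not eventually zero. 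Hence $C_p(X,\mathbb R)$ is not ${\rm TAP}$.

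For the converse, $X$ pseudocompact $\Longrightarrow$ $C_p(X,\mathbb R)\in{\rm TAP}$, suppose toward a contradiction that $(f_n)_{n\in\mathbb N}$ is hyper-summable but not eventually zero; passing to the subsequence of nonzero terms (still hyper-summable by Lemma \ref{vigo1}(a)) I may assume $f_n\neq 0$ for all $n$. Using point-finiteness I would extract inductively indices $n_1<n_2<\cdots$ and points $x_j$ with $f_{n_j}(x_j)\neq 0$, choosing each $n_{j+1}$ strictly beyond the (finite) support of $(f_k(x_j))_k$, so that $f_{n_{j'}}(x_j)=0$ whenever $j'>j$. This triangularity is the key device. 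Then I would choose integers $c_j$ by back-substitution: having fixed $c_1,\dots,c_{j-1}$, the quantity $\sum_{j'<j}c_{j'}f_{n_{j'}}(x_j)$ is already determined, and since $f_{n_j}(x_j)\neq 0$ I can pick $c_j$ so that $\bigl|\sum_{j'\le j}c_{j'}f_{n_{j'}}(x_j)\bigr|\ge j$. Setting $m_{n_j}=c_j$ and $m_k=0$ otherwise, the hyper-summable series defines $S\in C_p(X,\mathbb R)$ with $|S(x_j)|\ge j$ for all $j$, i.e.\ an \emph{unbounded} continuous function on $X$, contradicting pseudocompactness.

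The main obstacle is precisely this converse direction, namely controlling the interference of the distinct functions at the chosen test points. The triangular extraction (each new index taken past the finite support at the previous point, which is exactly where point-finiteness is used) together with the inductive choice of the coefficients $c_j$ is what makes the diagonal values blow up in a controlled way; getting this bookkeeping right is the crux, whereas the point-finiteness lemma and the ``not pseudocompact'' direction are routine.
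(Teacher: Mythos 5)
Your proof is correct, but there is nothing in the paper to compare it against: Theorem \ref{pseudo} is stated as a quotation of \cite[Theorem 5.3]{SS} and is never proved in this paper; the closest internal result is its ${\rm STAP}$ analogue, Theorem \ref{pseudo2m}. Measured against that proof, your ``not pseudocompact $\Rightarrow$ not ${\rm TAP}$'' direction is a genuine refinement rather than a repetition: the paper's construction (bump functions on the pairwise disjoint open sets supplied by Lemma \ref{Jale}(b)) only yields a \emph{hyper-converging} sequence, since disjointness controls each single term $m_nf_n(x)$ but says nothing about continuity of the pointwise limit of the partial sums $\sum_n m_nf_n$; you correctly see that hyper-summability requires that continuity and extract it from the \emph{local finiteness} of the cozero sets $|h|^{-1}((a_n,b_n))$ attached to an unbounded $h\in C(X)$ --- exactly the point where ${\rm TAP}$ and ${\rm STAP}$ part ways and where non-pseudocompactness, rather than mere infiniteness of $X$, enters. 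The converse, pseudocompact $\Rightarrow$ ${\rm TAP}$, has no counterpart anywhere in the paper, and your treatment of it is sound: the reduction to hyper-summable sequences via Lemmas \ref{co3}(a-bis) and \ref{co20} is legitimate because $C_p(X,\mathbb R)$ is abelian; point-finiteness of $\{n: f_n(x)\ne 0\}$ follows, as you say, from $m_nf_n(x)\to 0$ for every integer choice; the triangularity $f_{n_{j'}}(x_j)=0$ for $j'>j$ does follow from taking each $n_{j+1}$ past the finite support at $x_j$ (this propagates, since the $n_j$ increase); and the inductive choice of integers $c_j$ with $\left|\sum_{j'\le j}c_{j'}f_{n_{j'}}(x_j)\right|\ge j$ is possible precisely because $f_{n_j}(x_j)\ne 0$. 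The limit $S$ is then continuous by hyper-summability and satisfies $|S(x_j)|\ge j$, contradicting pseudocompactness.

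Two cosmetic points only. First, the coefficients $c_j$ are chosen by forward induction; ``back-substitution'' is a misnomer. Second, in the unbounded-$h$ construction you should say explicitly that the intervals $(a_n,b_n)$ are taken centered at attained values $t_n=|h(x_n)|$ chosen so that $t_{n+1}>t_n+2$: this is what guarantees simultaneously that each $U_n$ is nonempty, that the intervals are pairwise disjoint, and that $a_n\to\infty$ (which is what local finiteness uses).
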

To prove a ${\rm STAP}$-version of this result we need a lemma.
\begin{lemma}\label{Jale}
Let $Y$ be an infinite topological space.
\par
(a) If $Y$ is Hausdorff, then there exists $y\in Y$ and a sequence $(y_n)_{n\in \mathbb N}$ of distinct points of $Y\setminus \{y\}$ which does not converge to $y$.
\par
$(b)$ If $Y$ is Hausdorff regular, then there exists a sequence $(V_n)_{n\in \mathbb N}$ of {\em non-empty pairwise disjoint open} subsets of $Y$. 
\end{lemma}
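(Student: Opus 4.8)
The plan is to treat the two parts separately, using only elementary separation arguments and the uniqueness of sequential limits in Hausdorff spaces.

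For part (a), I would first fix a countably infinite set $\{a_1, a_2, \dots\}$ of pairwise distinct points of $Y$ (possible since $Y$ is infinite) and recall that in a Hausdorff space a convergent sequence has a unique limit. I then distinguish two cases. If $(a_n)_{n\ge 2}$ does not converge to $a_1$, I take $y := a_1$ and $y_n := a_{n+1}$; these are distinct points of $Y\setminus\{a_1\}$ not converging to $a_1$, as required. Otherwise $(a_n)_{n\ge 2}$ converges to $a_1$; then its tail $(a_n)_{n\ge 3}$ also converges to $a_1$, and since $a_1\ne a_2$, uniqueness of limits forces $(a_n)_{n\ge 3}$ not to converge to $a_2$. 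Taking $y := a_2$ and $y_n := a_{n+2}$ finishes this case. There is essentially no obstacle here; the only point to keep in mind is the reindexing needed so that the chosen point $y$ is excluded from the sequence.

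For part (b), I would argue by a dichotomy on isolated points. If $Y$ has infinitely many isolated points $x_1, x_2, \dots$, the singletons $V_n := \{x_n\}$ are nonempty, open and pairwise disjoint, and we are done. Otherwise $Y$ has only finitely many isolated points, so, as $Y$ is infinite, it has a non-isolated point $x$; every open neighbourhood of $x$ then contains a point different from $x$. I would exploit this by running a recursive construction in which $x$ plays the role of a permanent reservoir.

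Concretely, I set $R_0 := Y$ and, assuming an open neighbourhood $R_n\ni x$ has been built, I choose (using non-isolation of $x$) a point $z_{n+1}\in R_n$ with $z_{n+1}\ne x$, separate $x$ and $z_{n+1}$ by disjoint open sets $A_{n+1}\ni x$ and $B_{n+1}\ni z_{n+1}$ (Hausdorffness), and put $V_{n+1} := B_{n+1}\cap R_n$ and $R_{n+1} := A_{n+1}\cap R_n$. Each $V_{n+1}$ is nonempty (it contains $z_{n+1}$) and open, and $R_{n+1}$ is again an open neighbourhood of $x$, so the recursion never stalls. Disjointness is the one point that needs checking: one verifies $R_m\subseteq A_k$ for all $k\le m$, whence any later $V_m\subseteq R_{m-1}\subseteq A_k$ is disjoint from every earlier $V_k\subseteq B_k$ because $A_k\cap B_k=\emptyset$. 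The main (and only real) obstacle in a naive approach, namely that repeatedly splitting off open sets might exhaust the ambient set, is precisely what fixing the single non-isolated point $x$ as a perpetual reservoir avoids. I note that this argument uses only the Hausdorff separation of $x$ from the points $z_{n+1}$, so regularity, although assumed in the statement, is not actually needed for part (b); it could alternatively be used to make the reservoir neighbourhoods nested in closures, but this refinement is not required here.
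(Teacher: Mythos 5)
Your proof is correct. Part (a) coincides with the paper's own argument: pick a sequence of distinct points, and if it happens to converge to the designated point, swap that point with one of the sequence terms and invoke uniqueness of limits in a Hausdorff space. For part (b), however, you take a genuinely different route. The paper iterates part (a): at each step it fixes a point $b_i$ together with a sequence of distinct points not converging to it, and uses \emph{regularity} to choose an open neighbourhood $V_i$ of $b_i$ whose \emph{closure} still misses infinitely many terms of that sequence; the construction then continues inside this infinite residual set, so the resulting $V_i$ even have pairwise disjoint closures. You instead split on isolated points: if there are infinitely many, disjoint singletons finish the proof; otherwise a non-isolated point $x$ serves as a permanent reservoir, and Hausdorff separation of $x$ from points $z_{n+1}\in R_n\setminus\{x\}$ peels off nonempty open sets $V_{n+1}=B_{n+1}\cap R_n$ while shrinking the reservoir to $R_{n+1}=A_{n+1}\cap R_n$; your verification that $V_m\subseteq R_{m-1}\subseteq A_k$ for $k<m$, hence $V_m\cap V_k\subseteq A_k\cap B_k=\emptyset$, is exactly the point that needs checking, and it holds. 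Your approach buys economy of hypotheses: it never uses regularity, so it establishes (b) for every infinite Hausdorff space and makes (b) independent of (a). The paper's approach buys a stronger conclusion: its sets have pairwise disjoint \emph{closures} (recorded parenthetically at the end of its proof), which your Hausdorff-only construction does not deliver. Since the only use of this lemma in the paper, Theorem \ref{pseudo2m}, needs nothing more than pairwise disjoint nonempty open sets, your argument would serve equally well there.
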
 
\begin{proof}
$(a)$ 
Pick $y\in Y.$ Choose a sequence $(y_n)_{n\in {\mathbb N}}$  of pairwise distinct  elements of $Y\setminus\{y\}$. If $y_n\not \to y$ we are done.
Otherwise let $y$ and $y_1$ change places. Then, since $Y$ is Hausdorff, the sequence $y,y_2,y_3,\dots$ cannot converge to $y_1\ne y$.
\par
$(b)$ By (a) we can choose and fix a point $b_1\in Y$ and a sequence $(y_{1,n})_{n\in \mathbb N}$ of distinct points of $Y\setminus \{b_1\}$ which does not converge to $b_1$. Let $V_1$ be an open neighbourhood of $b_1$ for which the set $N_1:=\{n\in \mathbb N: y_{1,n}\not\in \overline{V_1}\}$ is infinite (such a $V_1$ exists because $Y$ is regular).\\
Let $Y_1:=\{y_{1,n}: n\in N_1\}$. Apply now (a) to the Hausdorf space $Y_1$ and find $b_2\in Y_1$ and a sequence $(y_{2,n})_{n\in \mathbb N}$ of distinct points of $Y_1\setminus \{b_2\}$ which does not converge to $b_2$. Since $b_2\in Y\setminus \overline{V_1}$, the set $Y\setminus \overline{V_1}$ is open, $Y$ is regular  and $(y_{2,n})_{n\in \mathbb N}$ does not converge to $b_2$, we can find an open neighbourhood $V_2$ of $b_2$ for which  $\overline{V_2}\subset Y\setminus \overline{V_1}$ and  the set $N_2:=\{n\in \mathbb N: y_{2,n}\not\in \overline{V_2}\}$ is infinite.
\par
Clearly, $V_1$ and $V_2$ are open sets with disjoint closures.
\par
Let $Y_2:=\{y_{2,n}: n\in N_2\}$. Apply again  (a) to the Hausdorf space $Y_2$ and find $b_3\in Y_2$ and a sequence $(y_{3,n})_{n\in \mathbb N}$ of distinct points of $Y_2\setminus \{b_3\}$ which does not converge to $b_3$. Since $b_3\in Y\setminus (\overline{V_1}\cup \overline{V_2})$, the set $Y\setminus (\overline{V_1}\cup \overline{V_2})$ is open, $Y$ is regular  and $(y_{3,n})_{n\in \mathbb N}$ does not converge to $b_3$, we can find an open neighbourhood $V_3$ of $b_3$ with   $\overline{V_3}\subset Y\setminus (\overline{V_1}\cup \overline{V_2})$ and for which the set $N_3:=\{n\in \mathbb N: y_{3,n}\not\in \overline{V_3}\}$ is infinite.
\par
Clearly, $V_1$ $V_2$ and $V_3$ are open sets with pairwise disjoint closures.
\par
Inductively, we  thus  obtain a sequence $(V_n)_{n\in \mathbb N}$ of { non-empty pairwise disjoint open} subsets of $Y$ (for which the sequence $(\overline{V_n})_{n\in \mathbb N}$ is pairwise disjoint as well).
\end{proof}
\begin{remark}{\em The statement of Lemma \ref{Jale}$(b)$ is contained in the proof of \cite [p. 233, Lemma 11.7.1]{Jar}. For the sake of self-containedness we have separated this statement and have reproduced its proof as well. Note that from Lemma \ref{Jale}$(b)$ it can be concluded that the cardinality of  the topology of an infinite regular Hausdorff space is greater than the cardinality of continuum.}
\end{remark}

\begin{theorem}\label{pseudo2m}
Let $X$ be a Tychonoff space. Then $C_p(X,\mathbb R)$ has the ${\rm STAP}$ property if and only if $X$ is finite.
\end{theorem}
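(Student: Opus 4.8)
The plan is to prove both implications, treating the finite case as the easy direction and devoting the main effort to showing that an infinite $X$ forces $C_p(X,\mathbb{R})$ to fail the ${\rm STAP}$ property. Recall that $C_p(X,\mathbb{R})$ is abelian with pointwise addition, that convergence in it is pointwise convergence, and that by Lemma \ref{co2} the group is ${\rm STAP}$ precisely when every hyper-convergent sequence is eventually zero; so to defeat ${\rm STAP}$ it suffices to exhibit a hyper-converging sequence that is not eventually zero.

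For the easy direction, suppose $X$ is finite. Being a finite Hausdorff space it is discrete, so every map $X\to\mathbb{R}$ is continuous and $C_p(X,\mathbb{R})$ coincides with $\mathbb{R}^{|X|}$ carrying the usual (Euclidean, equivalently product) topology. A finite-dimensional real vector space is ${\rm NSS}$: the open unit ball about $0$ contains no nontrivial subgroup, since any $v\neq 0$ satisfies $\|mv\|\to\infty$ as $m\to\infty$. Hence $C_p(X,\mathbb{R})$ is ${\rm NSS}$, and by Theorem \ref{shakh}(a) it is ${\rm STAP}$.

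For the converse, suppose $X$ is infinite. Since $X$ is Tychonoff it is in particular regular Hausdorff, so Lemma \ref{Jale}(b) supplies a sequence $(V_n)_{n\in\mathbb{N}}$ of non-empty pairwise disjoint open subsets of $X$. For each $n$ I fix a point $x_n\in V_n$; by complete regularity there is a continuous $f_n:X\to[0,1]$ with $f_n(x_n)=1$ and $f_n\equiv 0$ on $X\setminus V_n$. Thus each $f_n$ is a nonzero element of $C_p(X,\mathbb{R})$ whose set of nonvanishing points $\{x:f_n(x)\neq 0\}$ is contained in $V_n$, and because the $V_n$ are pairwise disjoint these sets are pairwise disjoint as well.

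Finally I check that $(f_n)_{n\in\mathbb{N}}$ witnesses the failure of ${\rm STAP}$ through Lemma \ref{co2}. The functions are pairwise distinct, since $f_n(x_n)=1$ while $f_m(x_n)=0$ for $m\neq n$ (as $x_n\notin V_m$), and the sequence is not eventually zero because every $f_n\neq 0$. The essential point is hyper-convergence: given any $(m_n)\in\mathbb{Z}^{\mathbb{N}}$ and any point $x\in X$, the disjointness of the supports forces $x$ into at most one $V_n$, so $f_n(x)=0$, hence $m_nf_n(x)=0$, for all but at most one index $n$; therefore $(m_nf_n(x))_n$ is eventually $0$ and $(m_nf_n)_n\to 0$ pointwise. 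Since $(m_n)$ was arbitrary, $(f_n)$ is hyper-converging but not eventually zero, and Lemma \ref{co2} yields that $C_p(X,\mathbb{R})$ is not ${\rm STAP}$. The only nontrivial ingredient is the separated-open-set structure of Lemma \ref{Jale}(b) combined with complete regularity to produce continuous bump functions with pairwise disjoint supports; once these are in hand, hyper-convergence for every integer sequence is automatic, so this is where I expect the real content (rather than any genuine obstacle) to lie.
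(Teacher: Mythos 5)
Your proof is correct and follows essentially the same route as the paper: Lemma \ref{Jale}(b) plus complete regularity to build bump functions with pairwise disjoint supports, then the observation that pointwise eventual vanishing makes the sequence hyper-converging for every integer sequence, contradicting ${\rm STAP}$ via Lemma \ref{co2}. Your treatment of the finite case (via ${\rm NSS}$ and Theorem \ref{shakh}(a)) just fills in a detail the paper dismisses as clear, so there is no substantive difference.
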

\begin{proof}
Suppose that $X$ is infinite and let us show that then $C_p(X,\mathbb R)\not\in {\rm STAP}$.\\
Since $X$ is infinite, by Lemma \ref{Jale}$(b)$ there exists a sequence $(V_n)_{n\in \mathbb N}$ of {\it non-empty pairwise disjoint open} subsets of $X$. Fix a sequence $(x_n)_{n\in \mathbb N}$ of (pairwise distinct) elements of $X$ such that
\begin{equation}\label{a25eqS}
x_n\in V_n,\,\,\, n=1,2,\dots
\end{equation}
Using (\ref{a25eqS}) and the complete regularity of $X$ we can find a sequence continuous functions  $f_n:X\to[0,1],\,\,n=1,2,\dots$ such that
\begin{equation}\label{a25eq3}
f_n(x_n)=1,\,\,f_n(x)=0,\,\forall x\in X\setminus V_n,\,\,\,n=1,2,\dots
\end{equation}
From (\ref{a25eq3})  it follows in particular  that $f_n\ne 0,\,\,n=1,2,\dots$, where $0$ stands for the identically zero function: the neutral element of $C_p(X,\mathbb R)$.
Consequently, $(f_n)_{n\in \mathbb N}$ {\it is not an eventually neutral sequence in} $C_p(X,\mathbb R)$. Now we will show that $(f_n)_{n\in \mathbb N}$ is a hyper-null sequence in $C_p(X,\mathbb R)$ and thus $C_p(X,\mathbb R)\not\in {\rm STAP}$.
\par 
Since the sets $V_n,\,n=1,2,\dots$ are pairwise disjoint, we have:
\begin{equation}\label{a25eq2}
{\rm Card}\{n\in \mathbb N:x\in V_n\}\le 1\,\,\forall x\in X\,.
\end{equation}
\par
Fix $x\in X$ and let us derive now from (\ref{a25eq2}) that $(f_n(x))_{n\in \mathbb N}$ {\it is  an eventually neutral sequence in} $\mathbb R$.\\
If $x\in X\setminus \cup_{n\in \mathbb N} V_n$, then from (\ref{a25eq3}) we have: $f_n(x)=0,\,n=1,2,\dots$\\
If $x\in  \cup_{n \in \mathbb N}V_n$, then $x\in V_{n_x}$ for some $n_x\in \mathbb N$.
This, together with (\ref{a25eq3}), gives:
$$
n\in\mathbb N,\,\,n>n_x\,\,\Longrightarrow\,\,f_n(x)=0\,.
$$
Hence $(f_n(x))_{n\in \mathbb N}$ {\it is  an eventually zero sequence in} $\mathbb R$. This clearly implies that for {\em every} sequence $(m_n)_{n\in \mathbb N}$ extracted from $\mathbb Z$ the sequence $(m_nf_n(x))_{n\in \mathbb N}$ is again an  eventually zero sequence in $\mathbb R$. In particular, we have that 
 $\lim_nm_nf_n(x)=0$ for {\em every} sequence $(m_n)_{n\in \mathbb N}$ extracted from $\mathbb Z$ and this by the definition of the topology of $C_p(X,\mathbb R)$ means that $(f_n)_{n\in \mathbb N}$ is a hypernull sequence in $C_p(X,\mathbb R)$.
 \par
 So, we have proved that if $C_p(X,\mathbb R)$ has the ${\rm STAP}$ property, then $X$ is finite. Conversely, if $X$ is finite, then clearly $C_p(X,\mathbb R)=\mathbb R^X \in {\rm STAP}$. \end{proof}

\section{The case of topological vector groups}\label{tvp}
In this section we are going to demonstrate that the methods of \cite{BPR} can be used to characterize the (complete) metrizable real topological vector spaces and topological vector groups which are $\rm{STAP}$ ($\rm{TAP}$). 
\par
A nonempty subset $A$ of a real vector space $E$ is called {\it balanced} if
$$
t\in \mathbb R,\,|t|\le 1\,\Longrightarrow\,tA:=\{ta:\,a\in A\}\subset A\,.
$$
If $A$ is balanced, then $0\in A$ and $A$ is symmetric. If  $0\in A$ and $A$ is symmetric and $A$ is convex, then $A$ is balanced (the converse is not true when ${\rm{Dim}}(E)>1$).
\begin{lemma}\label{1seq.1}
Let $A$ be a balanced subset of a real vector space $E$ and $a\in A$ be such that
$$
\langle a\rangle=\{ma:\,m\in \mathbb Z\} \subset A\,.
$$
Then 
$$
\mathbb Ra:=\{ta:\,t\in \mathbb R\}\subset A\,.
$$
\end{lemma}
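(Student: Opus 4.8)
The plan is to reduce every real multiple of $a$ to a scalar multiple, by a factor of modulus at most $1$, of a suitable \emph{integer} multiple of $a$, and then to invoke the balancedness of $A$. First I would fix an arbitrary $t\in\mathbb R$ and dispose of the trivial case $t=0$: since $A$ is balanced we have $0\in A$, and $0\cdot a=0$, so $0\cdot a\in A$ (alternatively, $0\in\langle a\rangle\subset A$).

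For $t\neq 0$ the key step is to choose a nonzero integer $m$ with $|m|\ge|t|$ (for instance $m=\lceil|t|\rceil$ carrying the sign of $t$, so that $m\neq 0$). Then $ma\in\langle a\rangle\subset A$ by hypothesis, and $ta=(t/m)(ma)$, where the scalar $t/m$ satisfies $|t/m|=|t|/|m|\le 1$. Since $A$ is balanced, $(t/m)A\subset A$; applying this to the element $ma\in A$ gives $ta=(t/m)(ma)\in A$.

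As $t\in\mathbb R$ was arbitrary, this shows $\mathbb Ra\subset A$. I do not expect any genuine obstacle here: the argument is essentially one line. The only point requiring a little care is that the chosen $m$ must be nonzero so that $t/m$ is well defined, which is guaranteed once the case $t=0$ is separated out. The whole content is that balancedness permits shrinking by any factor in $[-1,1]$, while the hypothesis $\langle a\rangle\subset A$ supplies arbitrarily large integer multiples of $a$ inside $A$ from which to shrink.
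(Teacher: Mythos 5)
Your proof is correct and is essentially the same as the paper's: both write $ta=(t/m)(ma)$ with an integer multiple $ma\in\langle a\rangle\subset A$ and a scalar of modulus at most $1$, then invoke balancedness. The only cosmetic difference is that the paper takes $m$ to be a positive natural number $n\ge |t|$, which makes your separate treatment of $t=0$ (and the sign bookkeeping) unnecessary.
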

\begin{proof}
Fix $t\in \mathbb R$. Find a natural number $n$ such that $|\frac{t}{n}|\le 1$. Take $a\in A$. Then $na\in \langle a\rangle\subset A$. Hence, as $na\in A$ and $A$ is balanced, we get: $ta=\frac{t}{n}(na)\in A$. \end{proof}
\begin{lemma}\label{1seq.2}
Let $E$ be a topological vector space over $\mathbb R$. Then:
\par
$(a)$ For every $t\in \mathbb R\setminus \{0\}$ the map $x\mapsto tx$ is a linear homeomorphism of $E$ onto $E$.
\par
$(b)$ The balanced members of $\mathcal N(E)$ form a basis of $\mathcal N(E)$.
\end{lemma}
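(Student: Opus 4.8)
The plan is to derive both parts directly from the two continuity axioms of a topological vector space over $\mathbb R$, namely that addition $E\times E\to E$ and scalar multiplication $\mathbb R\times E\to E$ are (jointly) continuous, and to use part $(a)$ as a tool inside the proof of part $(b)$.

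For $(a)$, I would first observe that $m_t\colon x\mapsto tx$ is linear for purely algebraic reasons, and that since $t\ne 0$ it is a bijection of $E$ onto itself with two-sided inverse $m_{1/t}\colon x\mapsto t^{-1}x$. Continuity of $m_t$ follows by composing the continuous embedding $x\mapsto (t,x)$ of $E$ into $\mathbb R\times E$ with the continuous scalar multiplication $\mathbb R\times E\to E$; the identical argument applied to the scalar $1/t$ gives continuity of $m_{1/t}$. Since both $m_t$ and its inverse are continuous, $m_t$ is a linear homeomorphism, which is exactly $(a)$.

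For $(b)$, fix an arbitrary $U\in\mathcal N(E)$; the task is to produce a \emph{balanced} $V\in\mathcal N(E)$ with $V\subset U$. Since scalar multiplication sends $(0,0)\in\mathbb R\times E$ to $0$ and is continuous there, there exist $\delta>0$ and $W\in\mathcal N(E)$ such that $sW\subset U$ whenever $|s|\le\delta$. I would then put $W'=\delta W$, which belongs to $\mathcal N(E)$ by part $(a)$, and take $V$ to be its balanced hull $V=\bigcup_{|r|\le 1} rW'$. By construction $V$ is balanced; it is a neighborhood of $0$ because $W'=1\cdot W'\subset V$; and it lies in $U$ because any $v=r\,w'=r\delta w$ with $|r|\le 1$ and $w\in W$ satisfies $|r\delta|\le\delta$, whence $v\in U$. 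This exhibits a balanced neighborhood of $0$ inside an arbitrary one, establishing $(b)$.

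The arguments are essentially routine, and I expect no genuine obstacle beyond correctly invoking the joint continuity of scalar multiplication at the origin to extract the pair $(\delta,W)$. The one point worth flagging is the interplay between the two parts: part $(a)$ is used precisely to guarantee that the rescaled set $\delta W$, and hence its balanced hull $V$, remains a neighborhood of $0$, so that passing to a balanced set absorbing $U$ does not destroy the neighborhood property.
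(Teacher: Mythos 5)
Your proof is correct and follows essentially the same route as the paper: both parts rest on joint continuity of scalar multiplication at $(0,0)$, with part $(a)$ invoked exactly as you flag, to ensure the rescaled neighborhood $\delta W$ stays a neighborhood of $0$. In fact your balanced hull $V=\bigcup_{|r|\le 1} r(\delta W)$ is literally the same set as the paper's $[-\varepsilon,\varepsilon]V=\{tx:\,t\in[-\varepsilon,\varepsilon],\,x\in V\}$, so the two arguments coincide up to notation.
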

\begin{proof}
$(a)$ is easy to see.\\
$(b)$ Fix  $U\in \mathcal N(E)$. Since the mapping $(t,x)\mapsto tx$ is continuous at $(0,0)\in \mathbb R\times E$, we can find $\varepsilon>0$ and $V\in \mathcal N(E)$ such that $t\in [-\varepsilon,\varepsilon],\,x\in V\,\Longrightarrow\,tx\in U$. Equivalently, for $\varepsilon>0$ and $V\in \mathcal N(E)$ we have $$[-\varepsilon,\varepsilon]V:=\{tx:\,t\in [-\varepsilon,\varepsilon],\,x\in V\,\}\subset U\,.$$ It is easy to see that the set 
$[-\varepsilon,\varepsilon]V$ is balanced. From $(a)$ it follows that  $\varepsilon V\in \mathcal N(E)$. Since $\varepsilon V\subset [-\varepsilon,\varepsilon]V$, we get $[-\varepsilon,\varepsilon]V\in \mathcal N(E)$. Consequently, for $U\in \mathcal N(E)$ we have found a balanced 
$[-\varepsilon,\varepsilon]V\in \mathcal N(E)$ such that $[-\varepsilon,\varepsilon]V\subset U$.\end{proof}

A topological abelian group $G$ which is also a vector space over $\mathbb R$ is called {\it a topological vector group over $\mathbb R$} if for every $t\in \mathbb R$ the map $x\mapsto tx$ is continuous.
\par
A topological abelian group $G$ which is also a vector space over $\mathbb R$ is called {\it locally balanced} if the balanced members of $\mathcal N(E)$ form a basis of $\mathcal N(E)$.
\par
A topological abelian group $G$ which is also a vector space over $\mathbb R$ is called {\it locally convex} if the convex symmetric members of $\mathcal N(E)$ form a basis of $\mathcal N(E)$.
\par
\begin{lemma}\label{1seq.3}
Let $G$  be a topological abelian group which is also a vector space over $\mathbb R$. Then
\par
$(a)$ If $G$ is locally balanced, then $G$ is a topological vector group over $G$.
\par
$(b)$ If $G$ is locally convex, then $G$ is a topological vector group over $G$.
\end{lemma}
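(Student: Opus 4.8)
The plan is to first reduce part $(b)$ to part $(a)$. Recall the elementary remark recorded just after the definition of a balanced set: if $0\in A$, $A$ is symmetric and $A$ is convex, then $A$ is balanced. Every convex symmetric member of $\mathcal N(G)$ contains $0$ and is therefore balanced by this remark. Hence, if the convex symmetric members of $\mathcal N(G)$ form a basis of $\mathcal N(G)$, then a fortiori the balanced members of $\mathcal N(G)$ form a basis of $\mathcal N(G)$; that is, $G$ is locally balanced. So $(b)$ will follow at once from $(a)$, and it remains to prove $(a)$.

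For $(a)$, fix $t\in\mathbb R$ and set $\mu_t\colon G\to G$, $\mu_t(x)=tx$. By the vector-space distributive law $t(x+y)=tx+ty$, the map $\mu_t$ is an endomorphism of the abelian group $(G,+)$. Since $G$ is a topological group and $\mu_t$ is a group homomorphism, $\mu_t$ is continuous as soon as it is continuous at the neutral element $0$; this is the standard translation argument. Thus I would reduce the whole statement to showing: \emph{for every $t\in\mathbb R$ and every $U\in\mathcal N(G)$ there is $V\in\mathcal N(G)$ with $tV\subset U$.}

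I would then record two auxiliary facts. First, for any natural number $n$ the map $x\mapsto nx$ is continuous, because in a topological abelian group it is the $n$-fold sum $x\mapsto x+\cdots+x$, a composition of the continuous group operation (together with $x\mapsto -x$ to handle negative integers); so integer scalar multiplications are automatically continuous. Second, the case $|t|\le 1$ is immediate from local balancedness: given $U\in\mathcal N(G)$, choose a balanced $W\in\mathcal N(G)$ with $W\subset U$, and then balancedness gives $tW\subset W\subset U$, so $V=W$ works. To treat an arbitrary $t$, I choose $n\in\mathbb N$ with $\lvert t/n\rvert\le 1$ and use the vector-space identity $tx=n\bigl(\tfrac{t}{n}\,x\bigr)$, i.e. $\mu_t=\mu_n\circ\mu_{t/n}$. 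The inner factor $\mu_{t/n}$ is continuous by the $\lvert\cdot\rvert\le 1$ case, the outer factor $\mu_n$ is continuous by the integer case, and a composition of continuous maps is continuous; hence $\mu_t$ is continuous, completing $(a)$.

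The only genuinely delicate point is the scalar with $\lvert t\rvert>1$: local balancedness by itself yields the contraction $sW\subset W$ only for $\lvert s\rvert\le 1$, so it cannot directly absorb a large scalar. The factorization $t=n\cdot(t/n)$ with $\lvert t/n\rvert\le 1$ is exactly what circumvents this, shifting the "large" part of the scalar onto an integer multiplication, which is free in any topological abelian group. Everything else — the reduction to continuity at $0$ via the homomorphism property, and the identities $t(x+y)=tx+ty$ and $n(\tfrac{t}{n}x)=tx$ — is routine and I would not belabor it.
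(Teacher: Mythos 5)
Your proof is correct, but it handles the crucial case $|t|>1$ by a different decomposition than the paper's. Both arguments share the same skeleton: reduction of the continuity of $\mu_t\colon x\mapsto tx$ to continuity at $0$ via the homomorphism property, the observation that integer multiplications $x\mapsto mx$ are automatically continuous in any topological abelian group, and the use of a balanced basic neighborhood $W\subset U$ to get $tW\subset W\subset U$ when $|t|\le 1$. Where you diverge is in absorbing a large scalar: you factor \emph{multiplicatively}, $t=n\cdot(t/n)$ with $|t/n|\le 1$, and write $\mu_t=\mu_n\circ\mu_{t/n}$ as a composition of continuous maps; the paper instead splits \emph{additively}, $t=s+m$ with $s\in[-1,1]$ and $m\in\mathbb Z$, and writes $\mu_t$ as the pointwise sum $x\mapsto sx+mx$ of two continuous maps, invoking continuity of the group operation $G\times G\to G$. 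The two routes are of equal length and rest on the same two ingredients; yours has the small advantage of reusing exactly the factorization trick of Lemma \ref{1seq.1} (there $ta=\frac{t}{n}(na)$) and of needing only composition of continuous maps, at the cost of checking the vector-space identity $n\bigl(\tfrac{t}{n}x\bigr)=tx$, while the paper's additive split trades that for the (equally routine) fact that a pointwise sum of continuous maps into a topological group is continuous. Your explicit reduction of $(b)$ to $(a)$ --- convex symmetric neighborhoods of $0$ are balanced, hence locally convex implies locally balanced --- is precisely what the paper leaves implicit in ``$(b)$ follows from $(a)$''.
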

\begin{proof}
$(a)$ Fix $t\in \mathbb R$ and $U\in \mathcal N(E)$. If $t\in [-1,1]$, find a balanced $V\in \mathcal N(E)$ with $V\subset U$. Then $tV\subset V\subset U$. Hence the map $x\mapsto tx$ is continuous at $0\in E$ and hence, is continuous. If $|t|>1$, we can write: $t=s+m$ for some $s\in [-1,1]$ and $m\in \mathbb Z$. Then, as we have seen,  the map $x\mapsto sx$ is continuous; since $G$ is a  topological group, the map  $x\mapsto mx$  is continuous as well. Consequently the map  $x\mapsto tx$, as the pointwise sum of two continuous mappings, is continuous as well.
\par
$(b)$ follows from $(a)$. \end{proof}
We will not need much about topological vector groups. For the reader it would be sufficient to believe that the class of locally balanced topological vector groups is wider that the class of topological vector spaces.
\par
We begin with the following easy observation.
\begin{Pro}\label{tvsnss}
For a locally balanced  Hausdorff topological vector group  $G\ne \{0\}$ over $\mathbb R$ TFAE:
\par
$(i)$ $G\not\in \rm{NSS}$.
\par
$(ii)$ Every  $U\in \mathcal N(G)$ contains a $1$-dimensional vector subspace of $G$.
\end{Pro}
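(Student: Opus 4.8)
The plan is to prove the two implications separately, but first to record the correct reading of $G\notin\mathrm{NSS}$. By definition $G\in\mathrm{NSS}$ means that \emph{some} open neighborhood of $0$ contains no nontrivial subgroup of $G$; negating this, and using that every neighborhood contains an open one (so that a nontrivial subgroup trapped in the open subneighborhood is trapped in the given neighborhood too), shows that $G\notin\mathrm{NSS}$ is equivalent to the assertion that \emph{every} $U\in\mathcal N(G)$ contains a nontrivial subgroup of $G$. I will use this reformulation throughout, since it lines up directly with the ``for every $U$'' phrasing of $(ii)$.

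The implication $(ii)\Rightarrow(i)$ is the easy one. If each $U\in\mathcal N(G)$ contains a $1$-dimensional subspace $\mathbb Ra$ with $a\ne 0$, then $\mathbb Ra$ is in particular a nontrivial subgroup of $G$ (nontrivial because $ta=sa$ forces $t=s$ when $a\ne 0$). Hence no neighborhood of $0$ can be free of nontrivial subgroups, and by the reformulation above $G\notin\mathrm{NSS}$. No further work is needed here.

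For $(i)\Rightarrow(ii)$, I fix an arbitrary $U\in\mathcal N(G)$ and exploit local balancedness: I first shrink $U$ to a balanced $V\in\mathcal N(G)$ with $V\subseteq U$. Since $G\notin\mathrm{NSS}$, by the reformulation the neighborhood $V$ contains a nontrivial subgroup $H$; choosing any $a\in H\setminus\{0\}$ yields $\langle a\rangle\subseteq H\subseteq V$. Now I invoke Lemma \ref{1seq.1}: as $V$ is balanced and contains $\langle a\rangle$, it contains the whole line $\mathbb Ra$. Because $a\ne 0$, the set $\mathbb Ra$ is a $1$-dimensional vector subspace of $G$, and $\mathbb Ra\subseteq V\subseteq U$, which is exactly $(ii)$.

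The argument is essentially mechanical once Lemma \ref{1seq.1} is available. The one point requiring care — and the place where the hypothesis is genuinely used — is the passage from an arbitrary neighborhood $U$ to a \emph{balanced} subneighborhood $V$: this is precisely what local balancedness supplies, and it is what upgrades a single cyclic subgroup $\langle a\rangle\subseteq V$ to the full real line $\mathbb Ra\subseteq V$. For a bare topological group (without the balanced-neighborhood basis) this step would break down, so I expect no obstacle beyond correctly bookkeeping the $\mathrm{NSS}$ definition and its negation and remembering to balance $V$ before extracting the subgroup.
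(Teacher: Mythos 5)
Your proof is correct and follows essentially the same route as the paper: for $(i)\Rightarrow(ii)$ you shrink $U$ to a balanced $V$, extract a nontrivial subgroup $H\subset V$, pick $a\in H\setminus\{0\}$, and apply Lemma \ref{1seq.1} to get $\mathbb{R}a\subset V\subset U$; for $(ii)\Rightarrow(i)$ you observe that a $1$-dimensional subspace is a nontrivial subgroup. The only difference is your explicit (and careful) bookkeeping of the negation of NSS, which the paper leaves implicit.
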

\begin{proof}
$(i)\Longrightarrow (ii)$. Fix $U\in \mathcal N(G)$, find a balanced $V\in\mathcal N(E)$ with $V\subset U$. By $(i)$ there is a nontrivial subgroup $H$ of $G$, such that $H\subset V$. Take some $x\in H\setminus \{0\}$. Then $\langle x\rangle\subset H\subset V$. From this and Lemma \ref{1seq.1} we get:
$\mathbb R x\subset V\subset U$.
\par
$(ii)\Longrightarrow (i)$ is true because a $1$-dimensional vector subspace of $G$ is also a nontrivial subgroup of $G$.
\end{proof}
It is clear that if $X$ is an infinite set,  then $\mathbb R^X\not\in \rm{NSS}$. This follows also from the following statement, which is not evident at once.
\begin{Pro}\label{tvnss2}
Let $X$ be a non-empty set and $E\ne \{0\}$ be a vector subspace of $\mathbb R^X$ endowed with the induced topology.
\par
If $E\in \rm{NSS}$, then $\rm{Dim}(E)<\infty$.
\end{Pro}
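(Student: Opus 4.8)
The plan is to prove the contrapositive: assuming ${\rm Dim}(E)=\infty$, I will show that $E\notin{\rm NSS}$. First I note that $E$, carrying the topology induced from $\mathbb R^X$ (the product/pointwise topology), is a Hausdorff locally convex topological vector group over $\mathbb R$; indeed $\mathbb R^X$ is locally convex and Hausdorff and these properties pass to subspaces. Since convex symmetric sets containing $0$ are balanced, $E$ is locally balanced, and by Lemma \ref{1seq.3}$(b)$ it is genuinely a topological vector group. As $E\ne\{0\}$, Proposition \ref{tvsnss} is applicable, so it suffices to produce, inside an arbitrary $U\in\mathcal N(E)$, a $1$-dimensional vector subspace of $E$.

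So fix $U\in\mathcal N(E)$. By the definition of the product topology on $\mathbb R^X$, there are a \emph{finite} set $F\subset X$ and an $\varepsilon>0$ with $E\cap W\subset U$, where $W=\{f\in\mathbb R^X:\ |f(i)|<\varepsilon,\ \forall i\in F\}$. The crux is to examine the coordinate-restriction map $\pi_F:E\to\mathbb R^F$, $f\mapsto (f(i))_{i\in F}$. This is $\mathbb R$-linear into a space of finite dimension $|F|$, so its kernel $\ker\pi_F=\{f\in E:\ f(i)=0,\ \forall i\in F\}$ has codimension at most $|F|$ in $E$; since ${\rm Dim}(E)=\infty$, the subspace $\ker\pi_F$ is infinite-dimensional, in particular nonzero.

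Now pick any $x\in\ker\pi_F\setminus\{0\}$. Because $x(i)=0$ for every $i\in F$, we also have $(tx)(i)=0$ for all $t\in\mathbb R$ and all $i\in F$, whence $\mathbb R x\subset W$; and $\mathbb R x\subset E$ gives $\mathbb R x\subset E\cap W\subset U$. Thus $U$ contains the $1$-dimensional subspace $\mathbb R x$ of $E$. Since $U\in\mathcal N(E)$ was arbitrary, the implication $(ii)\Longrightarrow(i)$ of Proposition \ref{tvsnss} yields $E\notin{\rm NSS}$, which is the desired contrapositive.

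I do not anticipate a serious obstacle here; the only point requiring a little care is the reduction to a single basic neighborhood $E\cap W$ with $F$ finite (a basic product-topology neighborhood of $0$ constrains only finitely many coordinates), after which the finite-codimensionality of $\ker\pi_F$ does all the work. Morally, the proposition just records that in $\mathbb R^X$ every $0$-neighborhood already swallows a full line through $0$ supported on the coordinates it leaves unconstrained, so an infinite-dimensional subspace can never be ${\rm NSS}$.
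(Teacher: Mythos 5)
Your proof is correct and is essentially the paper's own argument run in the contrapositive direction: both hinge on Proposition \ref{tvsnss} together with the fact that a basic neighborhood of $0$ in the induced product topology constrains only finitely many coordinates, and both exploit the coordinate-restriction map to that finite set. The paper phrases it directly ($E\in{\rm NSS}$ forces the restriction map $u:E\to\mathbb R^{\Delta}$ to be injective, whence $\dim E\le{\rm Card}(\Delta)$), while you observe that infinite dimension forces that map to have nontrivial kernel, whose nonzero elements span lines inside the given neighborhood --- the same computation, read backwards.
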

\begin{proof}
For a non-empty subset $\Delta\subset X$, $\varepsilon >0$  and write:
$$
V_{\Delta,\varepsilon}:=\{f\in E: f(\Delta)\subset [-\varepsilon,\varepsilon]\}.
$$
The sets $V_{\Delta,\varepsilon},\,\,\,\varepsilon>0$ when $\Delta$ runs over all finite subsets of $X$ form a basis for $\mathcal N(E)$.
\par
As $E\in \rm{NSS}$, by Proposition  \ref{tvsnss} there exists a finite non-empty $\Delta\subset X$ and $\varepsilon >0$, such that $V_{\Delta,\varepsilon }$ does not contain any $1$-dimensional vector subspace of $E$. Consider now the  mapping $u:E\to \mathbb R^{\Delta}$ defined by: $u(f)=f|_{\Delta},\,\,f\in E$. Clearly $u$ is linear. Let us see that $u$ is injective too. Take $f\in E$ with $u(f)=0\in \mathbb R^{\Delta}$. This means that $f(\Delta)=\{0\}$. Then $tf(\Delta)=\{0\},\,\,\forall t\in \mathbb R$. Hence, $tf \in V_{\Delta,\varepsilon},\,\,\forall t\in \mathbb R $. From this, since $V_{\Delta,\varepsilon}$ does not contain any $1$-dimensional vector subspace of $E$, we get that $f=0$. Therefore $u$ is injective and so, it is a vector space isomorphism between $E$ and $u(E)\subset \mathbb R^{\Delta}$. Consequently,  $\rm{Dim}(E)=\rm{Dim}(u(E))\le \rm{Dim}(\mathbb R^{\Delta})=\rm{Card}(\Delta)<\infty$.
 \end{proof}
 The formulation of the following statement for the case of complete metrizable topological vector spaces is contained in the proof of  \cite[Theorem 9]{BPR}.
 \begin{lemma}\label{1seq.4}
 Let $G\ne \{0\}$  be a locally balanced  Hausdorff topological vector group   over $\mathbb R$   and $d$ be a continuous metric on $G$. 
 \par
 If $G\not\in \rm{NSS}$, then there exists a sequence $(g_n)_{n\in \mathbb N}$ of  elements of $G$ such that
 
 \begin{equation}\label{1segan.1}
 0<\sup_{t\in \mathbb R}d(0,tg_1)<1,\,\, \sup_{t\in \mathbb R}d(0,tg_{n+1})<\frac{1}{4}\sup_{t\in \mathbb R}d(0,tg_n),\, n=1,2,\dots
 \end{equation}
  
 \end{lemma}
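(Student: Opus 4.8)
The plan is to isolate the quantity $s(g):=\sup_{t\in\mathbb R}d(0,tg)$, the $d$-diameter of the line $\mathbb Rg$ as seen from the origin, and to show it can be made positive but arbitrarily small; the sequence is then produced by a routine induction forcing the $s(g_n)$ to decay at least geometrically with ratio $\tfrac14$.

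First I would observe that the sublevel sets $U_\varepsilon:=\{g\in G:d(0,g)<\varepsilon\}$ are open neighbourhoods of $0$. Indeed $d(0,0)=0<\varepsilon$, so $0\in U_\varepsilon$, and $g\mapsto d(0,g)$ is continuous because $d$ is, so $U_\varepsilon$ is open; hence $U_\varepsilon\in\mathcal N(G)$ for every $\varepsilon>0$. Since $G\ne\{0\}$ is a locally balanced Hausdorff topological vector group over $\mathbb R$ and $G\notin\mathrm{NSS}$, Proposition \ref{tvsnss} applies and tells us that every member of $\mathcal N(G)$, in particular each $U_\varepsilon$, contains a $1$-dimensional vector subspace $\mathbb Rx$ of $G$.

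The key consequence I would record is this: for every $\varepsilon>0$ there is an $x\in G\setminus\{0\}$ with $0<s(x)\le\varepsilon$. To see it, pick a line $\mathbb Rx\subset U_\varepsilon$ with $x\ne0$; then $d(0,tx)<\varepsilon$ for all $t\in\mathbb R$, so $s(x)=\sup_{t}d(0,tx)\le\varepsilon$, while $s(x)\ge d(0,1\cdot x)=d(0,x)>0$ because $x\ne0$ and $d$ is a metric. With this at hand the construction is immediate by induction. For the first term choose any $\varepsilon_1\in(0,1)$ and let $g_1$ be the corresponding $x$, so that $0<s(g_1)\le\varepsilon_1<1$, which is the first inequality in (\ref{1segan.1}). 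Given $g_n$ with $s(g_n)>0$, apply the recorded consequence with $\varepsilon_{n+1}$ chosen strictly between $0$ and $\tfrac14 s(g_n)$ (for instance $\varepsilon_{n+1}=\tfrac15 s(g_n)$) to obtain $g_{n+1}$ with $0<s(g_{n+1})\le\varepsilon_{n+1}<\tfrac14 s(g_n)$. This yields the whole chain of inequalities and keeps every $s(g_n)$ strictly positive, so the recursion never stalls.

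The only genuine idea here lies in the first two paragraphs: recognising that the $d$-balls are honest neighbourhoods of $0$ and feeding them into the non-$\mathrm{NSS}$ characterisation of Proposition \ref{tvsnss} in order to trap an entire line inside a prescribed $d$-ball. Once that reduction is in place, the positivity $s(g_n)\ge d(0,g_n)>0$ and the geometric decay are purely mechanical; I expect no serious obstacle, only the minor bookkeeping of taking each $\varepsilon_{n+1}$ a shade below $\tfrac14 s(g_n)$ so that the inequalities come out strict.
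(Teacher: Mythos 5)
Your proof is correct and is essentially the paper's own argument: both exploit the continuity of $d$ to see that the balls $U_\varepsilon=\{g\in G:d(0,g)<\varepsilon\}$ belong to $\mathcal N(G)$, feed them into Proposition \ref{tvsnss} to trap a whole line $\mathbb R x$ (with $x\ne 0$, hence $\sup_{t}d(0,tx)\ge d(0,x)>0$) inside a prescribed ball, and then run the same induction, shrinking the radius to $\tfrac15$ of the previous supremum so that the inequalities in (\ref{1segan.1}) come out strict.
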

 \begin{proof}
 For a real number $r,\,0<r<1$ write:
$$
U_r=\{g\in G: d(0,g)<r\}\,.
$$
Since $d$ is continuous on $G\times G$, we have
$$
U_r\in \mathcal N(G)\,.
$$
Since $G\not\in \rm{NSS}$, by Proposition \ref{tvsnss} we can find and fix $g_1\in U_1\setminus \{0\}$ such that $\mathbb Rg_1\subset U_r$. Set
$$
\delta_1:=\sup_{t\in \mathbb R}d(0,tg_1)\,.
$$
Clearly, $0<\delta_1\le r<1$.
\par
In the same way we can find and fix $g_2\in U_{\frac{\delta_1}{5}}\setminus \{0\}$ such that $\mathbb Rg_2\subset U_{\frac{\delta_1}{5}}$. Set
$$
\delta_2:=\sup_{t\in \mathbb R}d(0,tg_2)\,.
$$
Clearly, $0<\delta_2\le \frac{\delta_1}{5}<\frac{\delta_1}{4}$.
\par
Suppose now that for a natural number $n\ge 2$ the elements $g_2,\dots,g_n$  and the numbers $\delta_2,\dots,\delta_n$ are already constructed such that
$$
\delta_{k}<\frac{1}{4}\delta_{k-1},\,\,\,\delta_k=\sup_{t\in \mathbb R}d(0,tg_k),\, k=2,\dots,n\,.
$$
Since $G\not\in \rm{NSS}$, by Proposition \ref{tvsnss} again we can find and fix $g_{n+1}\in U_{\frac{\delta_n}{5}}\setminus \{0\}$  such that $\mathbb Rg_{n+1}\subset U_{\frac{\delta_n}{5}}$ and set
$$
\delta_{n+1}:=\sup_{t\in \mathbb R}d(0,tg_{n+1})\,.
$$
Then  $0<\delta_{n+1}\le \frac{\delta_n}{5}<\frac{\delta_n}{4}$.
\par
In this way we can construct the sequence satisfying (\ref{1segan.1}).
 \end{proof}
 For a sequence $\mathfrak g:=(g_n)_{n\in \mathbb N}$ of elements of a topological vector group $G$ write:
 $$
 E_{\mathfrak g}:=\{t\in \mathbb R^{\mathbb N}:\,\left(\sum_{i=1}^nt(i)g_i\right )_{n\in \mathbb N}\,\text{converges}\,\,\,\text{in}\,\,\, G\,\}
 $$
 and define a mapping $u_{\mathfrak g}:E_{\mathfrak g}\to G$ by the equality:
 $$
 u_{\mathfrak g}t=\sum_{i=1}^{\infty}t(i)g_i:=\lim_n\sum_{i=1}^nt(i)g_i,\,\, t\in E_{\mathfrak g}\,.
 $$
 This notation will be used in the following  statement which is a kind of converse to Lemma \ref{1seq.4}.
 \begin{Pro}\label{1seqmt1}
 Let $G$  be a locally balanced  metrizable  topological vector group   over $\mathbb R$  and $d$ be an invariant metric which metrizes $G$. 
 \par
 If there exists a sequence  $\mathfrak g:=(g_n)_{n\in \mathbb N}$ of elements of $G$ satisfying (\ref{1segan.1}), then:
  \par
 $(a)$ $E_{\mathfrak g}$ is a vector subspace of $\mathbb R^{\mathbb N}$, $\mathbb R^{(\mathbb N)}\subset E_{\mathfrak g}$ and $u:E_{\mathfrak g}\to G$ is a linear {\em injective} mapping.
 \par
 $(b)$ If $E_{\mathfrak g}$ is endowed by the induced from $\mathbb R^{\mathbb N}$ topology, then $u=u_{\mathfrak g}:E_{\mathfrak g}\to G$ is a {\em continuous} linear  injective mapping for which the inverse mapping $u^{-1}:u(E_{\mathfrak g})\to E_{\mathfrak g}$ {\em is continuous as well}.
 \par
 $(c)$ $G$ contains a vector subspace which is linearly homeomorphic to $\mathbb R^{(\mathbb N)}$ endowed with the topology induced from $\mathbb R^{\mathbb N}$.
 \par
 $(d)$ $G\not\in \rm{NSS}$.
 \par
 $(e)$ If $G$ is complete, then $E_{\mathfrak g}=\mathbb R^{\mathbb N}$ and hence, $G$ contains a vector subspace which is linearly homeomorphic to $\mathbb R^{\mathbb N}$
   \end{Pro}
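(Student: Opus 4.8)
The whole argument turns on one workhorse estimate. Writing $\delta_n:=\sup_{t\in\mathbb R}d(0,tg_n)$, condition (\ref{1segan.1}) gives $\delta_{n+1}<\tfrac14\delta_n$, hence $\sum_{i>k}\delta_i<\tfrac13\delta_k$ for every $k$, and in particular $\sum_i\delta_i<\infty$. Since $d$ is invariant, the triangle inequality yields $d\bigl(0,\sum_{i=m+1}^n s(i)g_i\bigr)\le\sum_{i=m+1}^n d(0,s(i)g_i)\le\sum_{i=m+1}^n\delta_i$ for \emph{every} $s\in\mathbb R^{\mathbb N}$, so the partial sums of $\sum_i s(i)g_i$ are $d$-Cauchy with a tail that is bounded uniformly in $s$; this is the single fact driving everything. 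For $(a)$ I would first observe $\mathbb R^{(\mathbb N)}\subset E_{\mathfrak g}$ (finitely supported $s$ give eventually constant partial sums), and that $E_{\mathfrak g}$ is a subspace with $u$ linear, by passing to the limit in the finite identities for $s+s'$ and $\lambda s$ and using continuity of the group operation and of $x\mapsto\lambda x$. The one genuinely arithmetic point is injectivity, and here is the device I would use: if $us=0$ with $s\ne0$, let $k$ be the least index with $s(k)\ne0$, pick $t^{\ast}$ with $d(0,t^{\ast}g_k)>\tfrac12\delta_k$ (possible since $\delta_k$ is a supremum), and rescale to $s':=(t^{\ast}/s(k))\,s$, so $us'=0$, $s'(k)=t^{\ast}$ and $s'(i)=0$ for $i<k$. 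Then $t^{\ast}g_k=-\sum_{i>k}s'(i)g_i$, whence $d(0,t^{\ast}g_k)\le\sum_{i>k}\delta_i<\tfrac13\delta_k$, contradicting the choice of $t^{\ast}$. The crucial feature is that the tail bound $\sum_{i>k}\delta_i$ is insensitive to the rescaling, because each $\delta_i$ is a supremum over \emph{all} scalars.

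For $(b)$, continuity of $u$ at $0$ is immediate: given $\varepsilon$ choose $N$ with $\sum_{i>N}\delta_i<\varepsilon/2$ and control the first $N$ coordinates by continuity of the scalar maps and of the group operation. The real content is continuity of $u^{-1}$, i.e.\ that $us^{(m)}\to0$ forces $s^{(m)}(k)\to0$ for each $k$, which I would prove by induction on $k$ via the same extremal rescaling. Assuming $|s^{(m)}(k)|\ge\alpha>0$ along a subnet, rescale by $\lambda_m:=t^{\ast}/s^{(m)}(k)$ (bounded by $|t^{\ast}|/\alpha$) so the $k$-th coordinate becomes $t^{\ast}$; local balancedness (Lemma \ref{1seq.3}) makes a bounded multiple of a null net null, so $u(\lambda_m s^{(m)})\to0$, the tail past $k$ stays inside the ball of radius $\tfrac13\delta_k$, and the rescaled initial segment tends to $0$ by the induction hypothesis. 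Passing to the limit gives $d(0,t^{\ast}g_k)\le\tfrac13\delta_k$, the desired contradiction. I expect this to be the main obstacle: because $\sup_t d(0,tg_k)<\infty$, the line $\mathbb Rg_k$ is \emph{metrically} bounded while being far from topologically bounded, so no naive estimate recovers $s^{(m)}(k)$ from $us^{(m)}$; it is exactly local balancedness that legitimizes the rescaling and forces the finitely many already-recovered coordinates (via the one-dimensional maps $t\mapsto tg_j$) to contribute negligibly.

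Parts $(c)$, $(d)$, $(e)$ are then short. For $(c)$ I would simply restrict the homeomorphism of $(b)$ to the subspace $\mathbb R^{(\mathbb N)}\subset E_{\mathfrak g}$. For $(d)$, the subspace produced by $(c)$ is an infinite-dimensional vector subspace of $G$ topologically isomorphic to a subspace of $\mathbb R^{\mathbb N}$, hence not $\mathrm{NSS}$ by Proposition \ref{tvnss2}; and since a topological group with a non-$\mathrm{NSS}$ subgroup is itself non-$\mathrm{NSS}$ (a witnessing neighbourhood in $G$ would restrict to one in the subgroup), we conclude $G\not\in\mathrm{NSS}$. Finally, for $(e)$, when $G$ is complete the uniform Cauchy estimate above shows that the partial sums of $\sum_i s(i)g_i$ converge for \emph{every} $s\in\mathbb R^{\mathbb N}$, so $E_{\mathfrak g}=\mathbb R^{\mathbb N}$ and $(b)$ upgrades the embedding to a linear homeomorphic copy of $\mathbb R^{\mathbb N}$ inside $G$.
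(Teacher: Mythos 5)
Your proposal reproduces the architecture of the paper's own proof: the tail estimate $\sum_{i>k}\delta_i<\frac{1}{3}\delta_k$, continuity of $u$ by splitting off a finite head and bounding the tail uniformly, and recovery of the coordinates of $s$ from $us$ by rescaling a designated coordinate to an extremal scalar $t^{\ast}$. Two of your devices differ genuinely from the paper's, and favourably: your direct injectivity argument in $(a)$ avoids the paper's detour through the implication (\ref{2seq-sag}) and its ``without loss of generality'' replacement of $d$ by a metric with the monotonicity property (\ref{2segan.1}) (a replacement which would in any case require re-checking (\ref{1segan.1}) for the new metric); and in the $u^{-1}$ part of $(b)$ you substitute for that normalization the observation, correct in any locally balanced topological vector group, that a bounded sequence of scalars times a null sequence of vectors is null, which cleanly legitimizes the variable rescaling by $\lambda_m=t^{\ast}/s^{(m)}(k)$.

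There is, however, a genuine gap, and it occurs twice: in your continuity proof for $u$ (``control the first $N$ coordinates by continuity of the scalar maps'') and in the control of the initial segment in your induction for $u^{-1}$ (``via the one-dimensional maps $t\mapsto tg_j$''). Both steps need each map $t\mapsto tg_j$, from $\mathbb R$ with its usual topology into $G$, to be continuous at $0$. Nothing in the hypotheses provides this: a topological vector group is only required to have $x\mapsto tx$ continuous for each \emph{fixed} $t$, and local balancedness does not yield continuity in the scalar variable --- your rescaling lemma multiplies a null sequence of \emph{vectors} by bounded scalars, whereas here the vector $g_j$ is fixed and non-null while the scalar tends to $0$. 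The gap cannot be filled, because under the stated hypotheses parts $(b)$, $(c)$ and the final clause of $(e)$ are false. Let $G=\prod_{i\in\mathbb N}\mathbb R_d$ be the countable power of the \emph{discrete} real line, with the invariant metric $d(x,y)=\sum_{x_i\ne y_i}2^{-i}$, and let $g_n$ have $3n$-th coordinate equal to $1$ and all other coordinates $0$. Then $G$ is a complete metrizable locally balanced topological vector group and $\delta_n=8^{-n}$, so (\ref{1segan.1}) holds; but every one-dimensional vector subspace $\mathbb Rg$ of $G$ is discrete (if $g_{i_0}\ne0$, the open set $\{x:x_{i_0}=0\}$ meets $\mathbb Rg$ only in $0$), hence $t\mapsto tg_1$ is discontinuous, $u$ is not continuous on $E_{\mathfrak g}=\mathbb R^{\mathbb N}$, and $G$ contains no vector subspace linearly homeomorphic to $\mathbb R^{(\mathbb N)}$. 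Your $(a)$ stands, and $(d)$ can be rescued directly (every ball of radius $\varepsilon$ contains the nontrivial subgroup $\mathbb Rg_n$ once $\delta_n<\varepsilon$), but $(b)$, $(c)$ and the end of $(e)$ cannot. For what it is worth, the paper's own proof has the identical gap at the identical two spots (the unjustified ``$\lim_n\sum_{i=1}^{k}t_n(i)g_i=0$'', and again the head terms in the induction for (\ref{2seq-sag})); the proposition really belongs to topological vector \emph{spaces}, the Bessaga--Pelczynski--Rolewicz setting, where joint continuity of scalar multiplication supplies exactly the missing ingredient. (The integer-coefficient analogue needed for Theorem \ref{2seqTh1}$(iii)$ is unaffected: a nonzero integer has modulus at least $1$, so there the problematic initial-segment terms vanish identically.)
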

  \begin{proof}
  Put:
  \begin{equation}\label{1segan.2}
 \delta_n:= \sup_{t\in \mathbb R}d(0,tg_n),\, n=1,2,\dots
 \end{equation}
  
  Since $\delta_{n+1}<\frac{1}{4}\delta_n,\, n=1,2,\dots$, for a fixed $n$ we have: $\delta_{n+k}<\frac{1}{4^k}\delta_n,\, k=1,2,\dots$,      
 consequently, 
  \begin{equation}\label{1segan.3}
   \sum_{i=n+1}^{\infty}\delta_i<\sum_{i=1}^{\infty}\frac{1}{4^{i}}\delta_n=\frac{1}{3}\delta_n, \,\, n=1,2,\dots\,
  \end{equation}
  $(a)$ All statements, except injectivity, are easy to verify and they are true for each sequence  $\mathfrak g=(g_n)_{n\in \mathbb N}$. The injectivity of $u$ will follow from relation (\ref{2seq-sag}), which will be proved below.
  
  \par
  $(b)$  First let us see that $u$ is continuous. Take a sequence $t_n\in E_{\mathfrak g},\,n=1,2,\dots$ with $\lim_nt_n(i)=0,\forall i\in \mathbb N$, fix $\varepsilon>0$. Since $\sum_{i=2}^\infty \delta_i\le \frac{1}{3}\delta_1<\infty$, for some $k\in \mathbb N$ we have that $\sum_{i=k}^\infty \delta_i<\varepsilon$. So, for a fixed $n\in \mathbb N$ we have:
  $$
  d(ut_n,0)=d(\sum_{i=1}^{k}t_n(i)g_i+\sum_{i=k+1}^{\infty}t_n(i)g_i, 0)\le d(\sum_{i=1}^{k}t_n(i)g_i,0)+d(\sum_{i=k+1}^{\infty}t_n(i)g_i, 0)\le
  $$
    $$
  d(\sum_{i=1}^{k}t_n(i)g_i,0)+\sum_{i=k+1}^{\infty}d(t_n(i)g_i, 0)\le d(\sum_{i=1}^{k}t_n(i)g_i,0)+\sum_{i=k+1}^{\infty}\delta_i<d(\sum_{i=1}^{k}t_n(i)g_i,0)+\varepsilon\,.
  $$
  Hence, as $\lim_n\sum_{i=1}^{k}t_n(i)g_i=0$, we obtain:
  $$
  \limsup_nd(ut_n,0)\le \limsup_n\left( d(\sum_{i=1}^{k}t_n(i)g_i,0)+\varepsilon \right)=\lim_nd(\sum_{i=1}^{k}t_n(i)g_i,0)+\varepsilon=\varepsilon\,.
    $$
  From this, since $\varepsilon>0$ is arbitrary, we get: $\lim_nd(ut_n,0)=0$. Hence, $u$ is continuous.
    \par
    Now we want to prove the following implication, from which will follow the injectivity of $u$ together with the  continuity of $u^{-1}:u(E_{\mathfrak g})\to E_{\mathfrak g}$.
    \begin{equation}\label{2seq-sag} 
    t_n\in E_{\mathfrak g},\,n=1,2,\dots;\, \lim_n ut_n=0\,\,\Longrightarrow\, \lim_nt_n(i)=0,\forall i\in \mathbb N\,.
    \end{equation}
     
  To prove (\ref{2seq-sag}), we can assume without loss of generality that the metric $d$ has the following additional property:
  \begin{equation}\label{2segan.1}
  g\in G,\,\alpha_1,\alpha_2\in \mathbb R,\,\, |\alpha_1|\le |\alpha_2|\,\Longrightarrow\,d(\alpha_1\cdot g,0)\le d(\alpha_2\cdot g,0)\,.
  \end{equation}
    Fix a sequence $t_n\in E_{\mathfrak g},\,n=1,2,\dots$ such that
    \begin{equation}\label{2segan.2}
  \lim_n ut_n=\lim_n \sum_{i=1}^{\infty}t_n(i)g_i=0\,.
  \end{equation}
  and derive from (\ref{2segan.2}) that 
  \begin{equation}\label{2segan.3}
  \lim_nt_n(i)=0,\forall i\in \mathbb N\,.
  \end{equation}
   Since $u$ is homogeneus, (\ref{2segan.2}) implies:
  \begin{equation}\label{2segan.4}
  \lim_n u(\alpha\cdot t_n)=\lim_n\sum_{i=1}^{\infty}\alpha \cdot t_n(i)g_i=0\,\,\,\forall\alpha\in \mathbb R\,.
  \end{equation}
  Let us show first that $\lim_nt_n(1)=0$.   
  Suppose that the sequence $(t_n(1))_{n\in \mathbb N}$ does not tend to $0$. Then for some $\eta>0$ and some strictly increasing sequence $(k_n)$ of natural numbers we shall have:
  \begin{equation}\label{2segan.5}
  |t_{k_n}(1)|>\eta>0,\,\,n=1,2,\dots
  \end{equation}
   As 
   $$ \alpha\cdot t_{k_n}(1)g_1= u(\alpha\cdot t_{k_n})-\sum_{i=2}^{\infty}\alpha\cdot t_{k_n}(i)g_i,\,\,n=1,2,\dots;\,\forall\alpha\in \mathbb R\,,$$
      we can write:
  \begin{equation}\label{2segan.6}
  d(\alpha\cdot t_{k_n}(1)g_1,0)\le d(u(\alpha\cdot t_{k_n}),0)+d(\sum_{i=2}^{\infty}\alpha\cdot t_{k_n}(i)g_i,0)<d(u(\alpha\cdot t_{k_n}),0)+\frac{1}{3}\delta_1,\,n=1,2,\dots;\,\forall\alpha\in \mathbb R\,
  \end{equation}
  Now from (\ref{2segan.6}) by using (\ref{2segan.1}) we get:
   \begin{equation}\label{2segan.7}
  d(\alpha\cdot \eta g_1,0)<d(u(\alpha\cdot t_{k_n}),0)+\frac{1}{3}\delta_1,\,n=1,2,\dots;\,\forall\alpha\in \mathbb R\,
  \end{equation}
  As (see (\ref{2segan.4}))
  $$\lim_n d(u(\alpha\cdot t_{k_n}),0)=0\,\,\,\forall\alpha\in \mathbb R\,$$
  from (\ref{2segan.7}) we get:
  \begin{equation}\label{2segan.8}
  d(\alpha\cdot \eta g_1,0)\le \frac{1}{3}\delta_1,\,n=1,2,\dots;\,\forall\alpha\in \mathbb R\,
  \end{equation}
  From (\ref{2segan.8}), as $\eta\ne 0$, we conclude:
  $$
  \delta_1=\sup_{\alpha\in \mathbb R}d(\alpha \cdot \eta g_1,0)\le \frac{1}{3}\delta_1\,.
    $$
  A contradiction. Hence, $\lim_nt_n(1)=0$.
  \par
    Suppose now that for a natural number $q$ we have already proved that $\lim_nt_n(i)=0,\,i=1,\dots,q$ and let us derive from this that $\lim_nt_n(q+1)=0$. Suppose again that this is not so, i.e., the sequence $(t_n(q+1))_{n\in \mathbb N}$ does not tend to $0$. Then for some $\eta>0$ and some strictly increasing sequence $(k_n)$ of natural numbers we shall have:
  \begin{equation}\label{2segan.9}
  |t_{k_n}(q+1)|>\eta>0,\,\,n=1,2,\dots
  \end{equation}
  For a fixed $n$ and $\alpha\in \mathbb R$ we have:
  $$
  \alpha\cdot t_{k_n}(q+1)g_{q+1}=u(\alpha\cdot t_{k_n})-\sum_{i=1}^{q}\alpha\cdot t_{k_n}(i)g_i-\sum_{i=q+2}^\infty \alpha\cdot t_{k_n}(i)g_i\,,
  $$
  from this, by the triangle inequality, we get:
  $$
  d(\alpha\cdot t_{k_n}(q+1)g_{q+1},0)\le d(u(\alpha\cdot t_{k_n}),0) +d(\sum_{i=1}^q\alpha\cdot t_{k_n}(i)g_i,0) +d(\sum_{i=q+2}^\infty \alpha\cdot t_{k_n}(i)g_i,0)\le
  $$
  $$
  d(u(\alpha\cdot t_{k_n}),0) +d(\sum_{i=1}^q\alpha\cdot t_{k_n}(i)g_i,0) +\sum_{i=q+2}^\infty \delta_i\,.
   $$
   Hence,
  \begin{equation}\label{2segan.10}
    d((\alpha\cdot t_{k_n}(q+1)g_{q+1},0)< d(u(\alpha\cdot t_{k_n},0)+ d(\sum_{i=1}^q\alpha\cdot t_{k_n}(i)g_i,0)  +\frac{1}{3}\delta_{q+1}\,,\,n=1,2,\dots;\,\forall\alpha\in \mathbb R\,
  \end{equation}
  Now from (\ref{2segan.10}) by using (\ref{2segan.1}) we get:
   \begin{equation}\label{2segan.11}
  d(\alpha\cdot \eta g_{q+1},0)<d(u(\alpha\cdot t_{k_n},0)+ d(\sum_{i=1}^q\alpha\cdot t_{k_n}(i)g_i,0)  +\frac{1}{3}\delta_{q+1}\,,\,n=1,2,\dots;\,\forall\alpha\in \mathbb R\,
  \end{equation}
  By our assumption we have that  $\lim_nt_n(i)=0,\,i=1,\dots,q$. This gives:
  $$
  \lim_n d(\sum_{i=1}^q\alpha\cdot t_{k_n}(i)g_i,0)=0,\forall\alpha\in \mathbb R\, .
  $$
  We have also    (see (\ref{2segan.4}):
  $$\lim_n d(u(\alpha\cdot t_{k_n}),0)=0\,\,\,\forall\alpha\in \mathbb R\,.$$
  The last two relations together with 
   (\ref{2segan.11}) imply:
  \begin{equation}\label{2segan.12}
  d(\alpha\cdot \eta g_{q+1},0)\le \frac{1}{3}\delta_{q+1},\,\forall\alpha\in \mathbb R\,
  \end{equation}
  From (\ref{2segan.8}), as $\eta\ne 0$, we conclude:
  $$
  \delta_{q+1}=\sup_{\alpha\in \mathbb R}d(\alpha \cdot \eta g_{q+1},0)\le \frac{1}{3}\delta_{q+1}\,.
    $$
  A contradiction. Hence, $\lim_nt_n(q+1)=0$. Consequently,  (\ref{2seq-sag}) is proved and it implies the needed injectivity of $u$ and  the continuity of $u^{-1}$. 
  \par
  $(c)$ follows from $(b)$ and $(d)$ follows from $(c)$.
  \par
  (e)  
   Fix $ t\in \mathbb R^{\mathbb N}$. Clearly,
  $$
  \sum_{n=2}^\infty d(t_ng_n,0)\le \sum_{n=2}^\infty \delta_n <\frac{1}{3}\delta_1<\infty\,.
  $$ 
  From this, since $E$ is complete, we get that the series $\sum_{n=}^\infty t_ng_n$ converges in $G$. Hence, $t\in E_{\mathfrak g}$. The rest follows now from $(b)$.
     \end{proof}
     
      Now we can prove the following characterization theorem, in which $\mathbb Z^{(\mathbb N)}$ and $\mathbb R^{(\mathbb N)}$ are supposed to be endowed with the topologies induced from $\mathbb R^{\mathbb N}$
\begin{theorem}\label{2seqTh1}
For a  metrizable locally balanced topological vector group $G$ over $\mathbb R$ TFAE:
\begin{itemize}
\item[(i)] $G\in \rm{NSS}$.
\item[(ii)]  $G\in \rm{STAP}$.
\item[(iii)]  $G$ does not contain a subgroup  topologically isomorphic to $\mathbb Z^{(\mathbb N)}$.
\item[(iv)]  $G$ does not contain  a vector subspace topologically isomorphic to $\mathbb R^{(\mathbb N)}$.
\end{itemize}
\end{theorem}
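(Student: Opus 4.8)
The plan is to establish the cycle $(i)\Rightarrow(iii)\Rightarrow(iv)\Rightarrow(i)$, noting that $(i)\Leftrightarrow(ii)$ is immediate from Theorem \ref{shakh2} since $G$ is metrizable. For the analytic step I would first fix, once and for all, an invariant metric $d$ metrizing $G$; such a metric exists because $G$ is an abelian metrizable topological group (the left-invariant Birkhoff--Kakutani metric is two-sided invariant in the abelian case), and any such $d$ is in particular continuous. The key structural remark is that the only genuinely hard implication, $(iv)\Rightarrow(i)$, has in effect already been carried out in Lemma \ref{1seq.4} and Proposition \ref{1seqmt1}; the two remaining implications are elementary observations about the group $\mathbb Z^{(\mathbb N)}$.

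For $(iii)\Rightarrow(iv)$ I would argue by contraposition. Since the subspace topology is transitive and $\mathbb Z^{(\mathbb N)}\subset \mathbb R^{(\mathbb N)}\subset \mathbb R^{\mathbb N}$, the group $\mathbb Z^{(\mathbb N)}$ is a topological subgroup of $\mathbb R^{(\mathbb N)}$. Hence any topological isomorphism of $\mathbb R^{(\mathbb N)}$ onto a subgroup of $G$ restricts to a topological isomorphism of $\mathbb Z^{(\mathbb N)}$ onto a subgroup of $G$; thus $\neg(iv)\Rightarrow\neg(iii)$, which is $(iii)\Rightarrow(iv)$.

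For $(i)\Rightarrow(iii)$ I would again argue by contraposition, assuming $G$ contains a subgroup $H$ topologically isomorphic to $\mathbb Z^{(\mathbb N)}$. First I would observe that $\mathbb Z^{(\mathbb N)}$, with the topology induced from $\mathbb R^{\mathbb N}$, fails to be ${\rm NSS}$: a basic neighborhood of $0$ has the form $\{z\in \mathbb Z^{(\mathbb N)}: |z_i|<\varepsilon,\ i\in\Delta\}$ with $\Delta$ finite, and for $\varepsilon\le 1$ this is exactly the open subgroup $\{z: z_i=0,\ i\in\Delta\}$, which is nontrivial (it contains $\langle e_j\rangle\cong\mathbb Z$ for any unit vector $e_j$ with $j\notin\Delta$). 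So every neighborhood of $0$ contains a nontrivial subgroup. Next I would use that the ${\rm NSS}$ property passes to subgroups carrying the induced topology: if an open $U\in\mathcal N(G)$ witnesses ${\rm NSS}$ for $G$, then $U\cap H$ witnesses it for $H$, because a nontrivial subgroup of $H$ contained in $U\cap H$ would be a nontrivial subgroup of $G$ contained in $U$. Consequently $G\in{\rm NSS}$ would force $H\in{\rm NSS}$, a contradiction; hence $G\notin{\rm NSS}$, that is, $\neg(i)$.

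Finally, $(iv)\Rightarrow(i)$ is the implication carrying the real content, and I would prove its contrapositive. Assuming $G\notin{\rm NSS}$, I note that then $G\ne\{0\}$ (the trivial group being ${\rm NSS}$), so Lemma \ref{1seq.4} applies to the fixed metric $d$ and produces a sequence $(g_n)_{n\in\mathbb N}$ satisfying (\ref{1segan.1}); feeding this sequence into Proposition \ref{1seqmt1}$(c)$ yields a vector subspace of $G$ linearly homeomorphic to $\mathbb R^{(\mathbb N)}$, which is exactly $\neg(iv)$. Nominally this is the step I expect to be the main obstacle, but the actual difficulty, constructing the embedded copy of $\mathbb R^{(\mathbb N)}$, is precisely the injectivity-with-continuous-inverse estimate (\ref{2seq-sag}) already established in Proposition \ref{1seqmt1}$(b)$; with that in hand the present theorem is pure assembly, the only points requiring care being the existence of an invariant compatible metric and the automatic exclusion of the trivial group. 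This closes the cycle and completes the proof.
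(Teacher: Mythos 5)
Your proof is correct, and its analytic core coincides with the paper's: both reduce everything to the equivalence $(i)\Leftrightarrow(ii)$ from Theorem \ref{shakh2} and to the implication $(iv)\Rightarrow(i)$, obtained by contraposition from Lemma \ref{1seq.4} fed into Proposition \ref{1seqmt1}$(c)$, with $(iii)\Rightarrow(iv)$ handled by the inclusion $\mathbb Z^{(\mathbb N)}\subset\mathbb R^{(\mathbb N)}$ in both cases. The one genuine divergence is how $\mathbb Z^{(\mathbb N)}$ is excluded: the paper proves $(ii)\Rightarrow(iii)$ by invoking the fact that $\mathbb Z^{(\mathbb N)}$ is not ${\rm STAP}$ --- a fact resting on Lemma \ref{co3}$(b)$, i.e.\ on the analysis of $\Pi^{({\rm r})}_{i}D_i$ for discrete $D_i$ in Proposition \ref{agv14} --- whereas you prove $(i)\Rightarrow(iii)$ by checking directly that $\mathbb Z^{(\mathbb N)}$ is not ${\rm NSS}$ (a basic neighborhood of $0$ with $\varepsilon\le 1$ is itself a nontrivial open subgroup) and that ${\rm NSS}$ is inherited by subgroups with the induced topology. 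Your variant is more elementary and self-contained, bypassing the hyper-convergence machinery of Section \ref{hyp-hyp}; the paper's variant keeps the burden on ${\rm STAP}$, which is natural there since ${\rm STAP}$ is the property being characterized and is manifestly inherited by subgroups. A further small point in your favour: you make explicit that the metric used in Lemma \ref{1seq.4} and in Proposition \ref{1seqmt1} must be one and the same \emph{invariant} metric metrizing $G$, and that $G\ne\{0\}$ may be assumed, details the paper's proof leaves implicit.
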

\begin{proof}
The equivalence $(i)\Longleftrightarrow (ii)$ is contained in Theorem \ref{shakh2}.\\
Implication $(ii)\Longrightarrow (iii)$ is true because $\mathbb Z^{(\mathbb N)}$ is not $\rm{STAP}$.\\
Implication $(iii)\Longrightarrow (iv)$ is evident.\\
$(iv)\Longrightarrow (i)$. Suppose that this implication is not true. Then  $G\not \in \rm{NSS}$. From  this, according to Lemma \ref{1seq.4} and Proposition \ref{1seqmt1}$(c)$, it follows that $G$ contains a subspace isomorphic to $\mathbb R^{(\mathbb N)}$.  
 A contradiction with $(iv)$.
\end{proof}

In complete case we have also,
\begin{theorem}\label{2seqTh2}
For a complete metrizable locally balanced topological vector group $G$ over $\mathbb R$ TFAE:
\begin{itemize}
\item[(i)] $G\in \rm{NSS}$.
\item[(ii)]  $G\in \rm{STAP}$.
 \item[(iii)]  $G\in \rm{TAP}$.
 \item[(iv)]  $G$ does not contain a subgroup   topologically isomorphic to $\mathbb Z^{(\mathbb N)}$.
\item[(v)]  $G$ does not contain a vector subspace  topologically isomorphic to $\mathbb R^{(\mathbb N)}$.
 \item[(vi)] $G$ does not contain a subgroup  topologically isomorphic to $\mathbb Z^\mathbb N$.
\item[(vii)]  $G$ does not contain a vector subspace  topologically isomorphic to $\mathbb R^\mathbb N$.
\end{itemize}
\end{theorem}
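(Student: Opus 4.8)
The plan is to bootstrap from the two equivalence theorems already established and then to graft on the two ``full product'' clauses, isolating the single place where completeness is genuinely needed. Since $G$ is a topological vector group it is abelian, and being abelian and complete metrizable it is Weil-complete (the left and right uniformities of an abelian group coincide, so two-sided completeness is Weil completeness). Thus Theorem \ref{shakh3} applies verbatim and delivers $(i)\Longleftrightarrow(ii)\Longleftrightarrow(iii)$ at once (it in fact also gives the HTAP property, which simply is not among the clauses listed here). On the other hand Theorem \ref{2seqTh1}, which requires only metrizability and local balancedness, yields $(i)\Longleftrightarrow(ii)\Longleftrightarrow(iv)\Longleftrightarrow(v)$. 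Splicing the two at the common clauses $(i)$ and $(ii)$ makes $(i)$--$(v)$ mutually equivalent, so it remains only to weave in $(vi)$ and $(vii)$.

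I would next dispose of the trivial half of the remaining implications by pure containment. Recall that $\mathbb{Z}^{\mathbb{N}}$ and $\mathbb{R}^{\mathbb{N}}$ carry the product topology, $\mathbb{Z}$ being discrete as a subspace of $\mathbb{R}$, so that $\mathbb{Z}^{(\mathbb{N})}$ is a (topological) subgroup of $\mathbb{Z}^{\mathbb{N}}$ and $\mathbb{R}^{(\mathbb{N})}$ is a (topological) vector subspace of $\mathbb{R}^{\mathbb{N}}$. Hence a copy of $\mathbb{R}^{\mathbb{N}}$ sitting inside $G$ would contain a copy of $\mathbb{R}^{(\mathbb{N})}$; contrapositively $(v)\Longrightarrow(vii)$. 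In exactly the same way a copy of $\mathbb{Z}^{\mathbb{N}}$ inside $G$ would contain a copy of $\mathbb{Z}^{(\mathbb{N})}$, giving $(iv)\Longrightarrow(vi)$.

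The substance of the theorem is the reverse direction, and I would prove $(vi)\Longrightarrow(i)$ and $(vii)\Longrightarrow(i)$ simultaneously by contraposition. The case $G=\{0\}$ is vacuous, so assume $G\ne\{0\}$ and $G\notin\mathrm{NSS}$. Fix an invariant metric $d$ metrizing $G$ (Birkhoff--Kakutani, using that $G$ is abelian); such a $d$ is in particular continuous. By Lemma \ref{1seq.4} there is a sequence $\mathfrak g=(g_n)_{n\in\mathbb{N}}$ satisfying $(\ref{1segan.1})$, and then Proposition \ref{1seqmt1}$(e)$ --- this is precisely where completeness enters --- gives $E_{\mathfrak g}=\mathbb{R}^{\mathbb{N}}$ together with a vector subspace of $G$ linearly homeomorphic to the whole of $\mathbb{R}^{\mathbb{N}}$. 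This subspace negates $(vii)$ directly, and, since it contains $\mathbb{Z}^{\mathbb{N}}$ as a subgroup in its induced topology, it negates $(vi)$ as well. Therefore $(vii)\Longrightarrow(i)$ and $(vi)\Longrightarrow(i)$, and combined with the containments $(v)\Longrightarrow(vii)$ and $(iv)\Longrightarrow(vi)$ this pulls $(vi)$ and $(vii)$ into the equivalence class of $(i)$--$(v)$, completing the proof.

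The main obstacle is conceptual rather than computational, and it is exactly what separates this statement from Theorem \ref{2seqTh1}: in the merely metrizable setting one can only extract a copy of the direct sum $\mathbb{R}^{(\mathbb{N})}$, whereas the upgrade to the full product $\mathbb{R}^{\mathbb{N}}$ --- equivalently, the assertion that the convergence domain $E_{\mathfrak g}$ of the formal series $\sum_i t(i)g_i$ is all of $\mathbb{R}^{\mathbb{N}}$ --- truly needs completeness, which is supplied through the absolute-convergence estimate $\sum_{n=2}^{\infty}d(t_n g_n,0)\le\sum_{n=2}^{\infty}\delta_n<\tfrac13\delta_1<\infty$ in Proposition \ref{1seqmt1}$(e)$. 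Everything else in the argument is either quoted from the earlier equivalence theorems or is a one-line subgroup/subspace containment.
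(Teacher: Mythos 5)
Your proof is correct and takes essentially the same route as the paper's: both splice Theorem \ref{shakh3} (giving $(i)\Leftrightarrow(ii)\Leftrightarrow(iii)$, legitimate since a complete metrizable abelian group is Weil-complete) with Theorem \ref{2seqTh1} (giving $(iv)$, $(v)$), and both obtain the full-product clauses from Lemma \ref{1seq.4} together with Proposition \ref{1seqmt1}$(e)$, which is exactly where completeness enters. The only difference is cosmetic wiring of the trivial implications: the paper closes the cycle via $(i)\Rightarrow(vi)$ (because $\mathbb Z^{\mathbb N}\notin \mathrm{NSS}$ and NSS passes to subgroups) and $(vi)\Rightarrow(vii)$, whereas you use the containments $(iv)\Rightarrow(vi)$ and $(v)\Rightarrow(vii)$ and read off both $\neg(vi)$ and $\neg(vii)$ from the same copy of $\mathbb R^{\mathbb N}$.
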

\begin{proof}
The equivalences $(i)\Longleftrightarrow (ii)\Longleftrightarrow (iii)$  are contained in  Theorem \ref{shakh3}.\\
The equivalences $(ii)\Longleftrightarrow (iv)\Longleftrightarrow (v)$ we have by Theorem \ref{2seqTh1}.\\
Implications $(i)\Longrightarrow (vi)$ is true because $\mathbb Z^\mathbb N\not\in \rm{NSS}$.
Implication $(vi)\Longrightarrow (vii)$ is evident.\\
$(vii)\Longrightarrow (i)$. Suppose that this implication is not true. Then  $G\not \in \rm{NSS}$. From  this, according to Lemma \ref{1seq.4} and Proposition \ref{1seqmt1}$(e)$, it follows that $G$ contains a vector subspace topologically isomorphic to $\mathbb R^{\mathbb N}$.  
 A contradiction with (vii).\\
 
\end{proof}
\begin{remark}{\em
The implication $(vi)\Longrightarrow (iii)$ in Theorem \ref{2seqTh2} may fail in general: in  \cite[Example 12.1]{DSS} it is shown that the group $\mathbb Z_p$ of $p$-adic integers is not $\rm{TAP}$, but it does not even contains  a direct product $A\times B$ of two nontrivial subgroups equipped with the product topology.
\par
The equivalences $(i)\Longleftrightarrow (v)$ and $(i)\Longleftrightarrow (iii)$ of Theorem \ref{2seqTh2} in case of complete metrizable topological vector spaces concide with \cite[Theorem 9]{BPR} and with \cite[p. 49, Corollary to Theorem 9]{BPR}. An interesting  proof of the implication $(vii)\Longrightarrow (i)$ of Theorem \ref{2seqTh2} in case of complete metrizable locally convex topological vector spaces 
is contained also in \cite[p. 129, Theorem 7.2.7]{Jar}.
}
\end{remark}
\section{Appendix: multipliability in abelian case}
In this section $G$ will stand for a Hausdorff topological abelian group with internal operation $\cdot$ and  $I$ will be an infinite (not necessarily countable) set, $\mathfrak F(I)$ will denote the collection of all {\it finite} subsets of $I$.
\par
Note that, since $G$ is abelian, for each $\alpha \in \mathfrak F(I)$ and a family $(g_i)_{i\in \alpha}$ of elements of $G$ the product $\prod_{i\in \alpha}g_i$ is well-defined (we agree that if $\alpha=\emptyset$, then  $\prod_{i\in \alpha}g_i:=e$).
\par
Let $(g_i)_{i\in I}$ be a family of elements of $G$ and $g\in G$. We say that the $(g_i)_{i\in I}$ {\it is multipliable to } $g$ if 
for each $U\in \mathcal N(G)$ there is 
$\alpha_U\in  \mathfrak F(I)$ such that 
\begin{equation}\label{9seq1}
\alpha\in  \mathfrak F(I),\,\,\alpha\supset \alpha_U\,\Longrightarrow\, \prod_{i\in \alpha}g_i\in gU\,.
\end{equation}
A family $(g_i)_{i\in I}$ of elements of $G$ will be called {\it multipliable} if there exists an element $g\in G$ such that $(g_i)_{i\in I}$ is multipliable to $g$.
\begin{remark}\label{a22rn1}{\em In case when the internal operation is $+$, instead of "a multipliable family", the term "a summable family" is used in \cite{Bou1}, where this concept is defined by using of the notion of the section filter associated with the directed partially ordered set  $\left(\mathfrak F(I),\subset\right)$.
 Multipliable sequences in (not necessarily abelian) normed algebras are treated in \cite{Bou2}; the case of groups is considered  in  \cite{Const1, Const2}.
}
\end{remark}
\par
The following lemma can be derived easily from the assumption that $G$ is Hausdorff.
\begin{lemma}\label{alo1}
Let $(g_i)_{i\in I}$ be a multipliable family of elements of $G$, $g,h\in G$. If $(g_i)_{i\in I}$ is multipliable to $g$ and  $(g_i)_{i\in I}$ is multipliable to $h$, then $g=h$.
\end{lemma}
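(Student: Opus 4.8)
The plan is to prove uniqueness by contradiction, exploiting the Hausdorff property of $G$ in exactly the way one proves that a net in a Hausdorff space cannot have two distinct limits. Indeed, the object at hand is the net $\left(\prod_{i\in\alpha}g_i\right)_{\alpha\in\mathfrak F(I)}$ indexed by the directed set $(\mathfrak F(I),\subset)$, and multipliability to $g$ (resp.\ to $h$) is precisely convergence of this net to $g$ (resp.\ to $h$). So the content of the lemma is the standard uniqueness-of-limit principle, specialized to this concrete net.

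First I would assume $g\neq h$ and produce a single neighborhood that separates the two candidate limits at the level of the translates appearing in \eqref{9seq1}. Since $G$ is Hausdorff we have $g^{-1}h\neq e$, so there is $V\in\mathcal N(G)$ with $g^{-1}h\notin V$; using continuity of multiplication at $(e,e)$ together with continuity of inversion, I would choose a \emph{symmetric} $U\in\mathcal N(G)$ with $U\cdot U\subset V$. The key claim is then that $gU\cap hU=\emptyset$. Otherwise some element would lie in both, giving $g u_1=h u_2$ with $u_1,u_2\in U$, whence $g^{-1}h=u_1u_2^{-1}\in U\cdot U\subset V$ (using $u_2^{-1}\in U$ by symmetry), contradicting the choice of $V$. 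This separation step is the only place where the Hausdorff hypothesis enters, and it is the technical heart of the argument, although it is entirely routine for topological groups.

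Next I would invoke the two hypotheses with this particular $U$. Applying the definition of ``multipliable to $g$'' yields $\alpha_1\in\mathfrak F(I)$ such that $\prod_{i\in\alpha}g_i\in gU$ for every finite $\alpha\supset\alpha_1$; similarly ``multipliable to $h$'' yields $\alpha_2\in\mathfrak F(I)$ with $\prod_{i\in\alpha}g_i\in hU$ for every finite $\alpha\supset\alpha_2$. Since $\mathfrak F(I)$ is directed by inclusion, the set $\alpha:=\alpha_1\cup\alpha_2$ is again finite and contains both $\alpha_1$ and $\alpha_2$, so the single product $\prod_{i\in\alpha}g_i$ would belong simultaneously to $gU$ and to $hU$. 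This contradicts $gU\cap hU=\emptyset$, and therefore $g=h$.

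Finally I would note that commutativity of $G$ is used only to make each finite product $\prod_{i\in\alpha}g_i$ unambiguously defined, as already agreed in the preamble to this appendix, so that passing to $\alpha_1\cup\alpha_2$ requires no bookkeeping about the order of the factors; beyond Hausdorffness, no further structure is needed. I do not expect any genuine obstacle here: the whole difficulty, such as it is, is confined to extracting the symmetric neighborhood $U$ with $gU\cap hU=\emptyset$, after which the directedness of $(\mathfrak F(I),\subset)$ closes the argument immediately.
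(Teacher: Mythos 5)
Your proof is correct, and it is exactly the standard uniqueness-of-net-limits argument that the paper has in mind: the paper states the lemma without proof, remarking only that it ``can be derived easily from the assumption that $G$ is Hausdorff'', and your separation argument with a symmetric $U$ satisfying $U\cdot U\subset V$, combined with the directedness of $(\mathfrak F(I),\subset)$, is precisely that easy derivation.
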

\begin{lemma}\label{alo2}
Let $(g_i)_{i\in \mathbb N}$ be a sequence of elements of $G$, $g\in G$. If $(g_i)_{i\in \mathbb N}$ is multipliable to $g$ and  $\pi:\mathbb N \to \mathbb N$  is a bijection, then the sequence $(\Pi_{i=1}^ng_{\pi(i)})_{n\in \mathbb N}$ converges to $g$.
\end{lemma}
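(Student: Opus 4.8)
The plan is to unwind the definition of multipliability and transfer the finite-subset condition (\ref{9seq1}) to the ordered partial products by means of the bijection $\pi$, using commutativity of $G$ to identify each ordered partial product with a product over a finite index set.

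First I would fix an arbitrary $V\in\mathcal N(G)$; proving that $\left(\Pi_{i=1}^n g_{\pi(i)}\right)_{n\in\mathbb N}$ converges to $g$ amounts to producing $N\in\mathbb N$ with $\Pi_{i=1}^n g_{\pi(i)}\in gV$ for all $n\ge N$. Since $(g_i)_{i\in\mathbb N}$ is multipliable to $g$, applying the defining property with the neighborhood $V$ furnishes a finite set $\alpha_V\in\mathfrak F(\mathbb N)$ such that every finite $\alpha\supset\alpha_V$ satisfies $\prod_{i\in\alpha}g_i\in gV$.

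Next I would exploit that $\pi$ is a bijection. Writing $\beta_n:=\{\pi(1),\dots,\pi(n)\}$, injectivity of $\pi$ gives ${\rm Card}(\beta_n)=n$, and since $G$ is abelian the partial product does not depend on the order of its factors, so $\Pi_{i=1}^n g_{\pi(i)}=\prod_{j\in\beta_n}g_j$. Because $\alpha_V$ is finite, $\pi^{-1}(\alpha_V)$ is a finite subset of $\mathbb N$; setting $N:=\max\pi^{-1}(\alpha_V)$ (and $N:=1$ when $\alpha_V=\emptyset$) ensures $\beta_n\supset\alpha_V$ whenever $n\ge N$, since each $k\in\alpha_V$ equals $\pi(\pi^{-1}(k))$ with $\pi^{-1}(k)\le N\le n$.

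Finally, for $n\ge N$ the set $\beta_n$ is a finite superset of $\alpha_V$, so (\ref{9seq1}) yields $\prod_{j\in\beta_n}g_j\in gV$, that is, $\Pi_{i=1}^n g_{\pi(i)}\in gV$. As $V\in\mathcal N(G)$ was arbitrary, the sequence converges to $g$. I do not foresee a genuine obstacle here: the only point demanding care is the reindexing identity $\Pi_{i=1}^n g_{\pi(i)}=\prod_{j\in\beta_n}g_j$, which is precisely where commutativity of $G$ enters, together with the purely combinatorial observation that the initial segments $\beta_n$ eventually absorb the fixed finite set $\alpha_V$.
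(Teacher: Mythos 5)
Your proposal is correct and follows essentially the same argument as the paper: extract $\alpha_U$ from the definition of multipliability, set $N=\max\pi^{-1}(\alpha_U)$, note that the initial segments $\pi(\{1,\dots,n\})$ contain $\alpha_U$ for $n\ge N$, and identify the ordered partial product with the product over that finite set using commutativity. Your explicit handling of the case $\alpha_U=\emptyset$ is a small point of care the paper leaves implicit, but otherwise the two proofs coincide.
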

\begin{proof}
Fix a bijection $\pi:\mathbb N \to \mathbb N$ and let us show that the sequence $(\Pi_{i=1}^ng_{\pi(i)})_{n\in \mathbb N}$ converges to $g$.
\par
Fix $U\in \mathcal N(G)$ and let us find $n_U\in \mathbb N$ such that
\begin{equation}\label{kreq01}
 n\in\mathbb N,\,n\ge n_U\,\Longrightarrow\,\prod_{i=1}^ng_{\pi(i)}\in gU\,.
 \end{equation}
 Since $(g_i)_{i\in \mathbb N}$ is multipliable to $g$, there is 
$\alpha_U\in  \mathfrak F(\mathbb N)$ such that 
\begin{equation}\label{kreg02}
\alpha\in  \mathfrak F(\mathbb N),\,\,\alpha\supset \alpha_U\,\Longrightarrow\, \prod_{i\in \alpha}g_i\in gU\,.
\end{equation}
Let $n_U:=\max \pi^{-1}(\alpha_U)$. Take an arbitrary $n\in \mathbb N$ with $n\ge n_U$. Since $\pi$ is a bijection, we have:
 \begin{equation}\label{kreg03}
 \pi(\{1,\dots,n\})\supset \pi(\{1,\dots,n_U\})\supset \alpha_U\,. 
 \end{equation}
 From (\ref{kreg03}) and (\ref{kreg02}) we get:
 \begin{equation}\label{kreg04}
  \prod_{i\in \pi(\{1,\dots,n\}) }g_i\in gU\,.
 \end{equation}
 Clearly,
 \begin{equation}\label{kreg05}
  \prod_{i\in \pi(\{1,\dots,n\}) }g_i=\prod_{i=1}^ng_{\pi(i)}\,.
 \end{equation}
 Now (\ref{kreg04}) and (\ref{kreg05}) imply:
 \begin{equation}\label{kreg06}
  \prod_{i=1}^ng_{\pi(i)}\in gU\,.
 \end{equation}
 Therefore for an arbitrary $U\in \mathcal N(G)$ and we have found  $n_U\in \mathbb N$ for which (\ref{kreq01}) is satisfied. Hence, the sequence $(\Pi_{i=1}^ng_{\pi(i)})_{n\in \mathbb N}$ converges to $g$.  
\end{proof}
\begin{remark}{\em
The following converse to Lemma \ref{alo2} is true:\\
 {\it Let $(g_i)_{i\in \mathbb N}$ be a sequence of elements of $G$ such that for every bijection 
$\pi:\mathbb N \to \mathbb N$  the sequence $(\Pi_{i=1}^ng_{\pi(i)})_{n\in \mathbb N}$ converges, then
 $(g_i)_{i\in \mathbb N}$ is multipliable in $G$} \cite[Ch.III,\S5.7, Proposition 9]{Bou1}.\\
  This statement will not be used below.

}

\end{remark}
A family $(g_i)_{i\in I}$ of elements of $G$ will be called {\it pre-multipliable}  in $G$ if for every $U\in \mathcal N(G)$ there is $\beta_U\in \mathfrak F(\mathbb N)$ such that 
$$
\beta  \in \mathfrak F(I)\,\,,\,\,\beta\cap \beta_U=\emptyset\,\Longrightarrow\, \prod_{i\in \beta}g_i\in U\,.
$$

\begin{lemma}\label{alo4}
Let $(g_i)_{i\in \mathbb N}$ be a pre-multipliable sequence of elements of $G$, $g\in G$. If the sequence  $(\Pi_{i=1}^ng_{i})_{n\in \mathbb N}$ converges to $g$, then $(g_i)_{i\in \mathbb N}$ is multipliable to $g$.
\end{lemma}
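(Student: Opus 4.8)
The plan is to verify the defining condition of multipliability to $g$ directly, exploiting the abelianness of $G$ to split any large finite product into an initial segment, controlled by convergence of the partial products, and a remote tail, controlled by pre-multipliability.

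First I would fix $U\in\mathcal N(G)$ and, using continuity of the group multiplication at $(e,e)$, choose $V\in\mathcal N(G)$ with $VV\subset U$. Applying pre-multipliability to $V$ yields $\beta_V\in\mathfrak F(\mathbb N)$ such that every finite product over a set disjoint from $\beta_V$ lands in $V$; the hypothesis that $\bigl(\prod_{i=1}^n g_i\bigr)_{n\in\mathbb N}$ converges to $g$ yields $n_V\in\mathbb N$ with $\prod_{i=1}^n g_i\in gV$ for all $n\ge n_V$. I would then set $N:=\max(n_V,\max\beta_V)$ and take $\alpha_U:=\{1,\dots,N\}$, so that in particular $\alpha_U\supset\beta_V$.

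The core step is to show $\prod_{i\in\alpha}g_i\in gU$ for every finite $\alpha\supset\alpha_U$. Writing $\beta:=\alpha\setminus\{1,\dots,N\}$, abelianness gives $\prod_{i\in\alpha}g_i=\bigl(\prod_{i=1}^N g_i\bigr)\bigl(\prod_{i\in\beta}g_i\bigr)$. Since $N\ge n_V$, the first factor lies in $gV$; since every element of $\beta$ exceeds $N\ge\max\beta_V$, we have $\beta\cap\beta_V=\emptyset$, so the second factor lies in $V$. Hence the product lies in $gV\cdot V=g(VV)\subset gU$, as required, and since $U\in\mathcal N(G)$ was arbitrary this shows that $(g_i)_{i\in\mathbb N}$ is multipliable to $g$.

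The argument is short; the only point requiring care is the clean decomposition of $\alpha$ into the initial block $\{1,\dots,N\}$ and a tail disjoint from $\beta_V$, which is exactly where both commutativity (to regroup the product into its initial and tail parts) and the choice $N\ge\max\beta_V$ (to guarantee the disjointness $\beta\cap\beta_V=\emptyset$) are used. I expect no serious obstacle beyond correctly matching the two $\mathcal N(G)$-estimates through the relation $VV\subset U$.
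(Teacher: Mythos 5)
Your proof is correct and follows essentially the same route as the paper's: fix $U$, pick $V$ with $VV\subset U$, combine the convergence estimate on the initial segment $\{1,\dots,N\}$ with the pre-multipliability estimate on the tail $\beta=\alpha\setminus\{1,\dots,N\}$, and regroup using commutativity. The only (immaterial) difference is your threshold $N=\max(n_V,\max\beta_V)$ versus the paper's $N_V=n_V+\max\beta_V$; both guarantee $N\ge n_V$ and $\beta\cap\beta_V=\emptyset$.
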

\begin{proof}
Fix $U\in \mathcal N(G)$, find a $V\in \mathcal N(G)$ such that $VV\subset U$.
\par
 Since the sequence  $(\Pi_{i=1}^ng_{i})_{n\in \mathbb N}$ converges to $g$, there is $n_V\in \mathbb N$ such that
 \begin{equation}\label{kreq}
 n\in\mathbb N,\,n\ge n_V\,\Longrightarrow\,\prod_{i=1}^ng_i\in gV\,.
 \end{equation}
 Since  $(g_i)_{i\in \mathbb N}$ is pre-multipliable, there is $\beta_V\in \mathfrak F(\mathbb N)$ such that 
\begin{equation}\label{kreq2}
\beta  \in \mathfrak F(\mathbb N)\,\,,\,\,\beta\cap \beta_V=\emptyset\,\Longrightarrow\, \prod_{i\in \beta}g_i\in V\,.
\end{equation}
Let $N_V:=n_V+\max \beta_V$ and $\alpha_U:=\{1,\dots,N_V\}$ (we agree: $\max\emptyset:=0$). Fix now an arbitrary $\alpha \in \mathfrak F(\mathbb N)$ with $\alpha\supset \alpha_U$ and write: $\beta:= \alpha\setminus \alpha_U$. Observe that
\begin{equation}\label{kreq3}
\prod_{i\in \alpha}g_i=\left(\prod_{i=1}^{N_V}g_i\right)\left(\prod_{i\in \beta}g_i\right)\,. 
\end{equation}
Since $N_V\ge n_V$, from (\ref{kreq}) we conclude:
\begin{equation}\label{kreq4}
\prod_{i=1}^{N_V}g_i\in gV\,.
\end{equation}
Since $\alpha\cap \beta_V=\emptyset$, the relation (\ref{kreq2}) implies:
\begin{equation}\label{kreq5}
\prod_{i\in \beta}g_i\in V\,.
\end{equation}
Now (\ref{kreq4}) and (\ref{kreq5}) together with (\ref{kreq3}) imply:
\begin{equation}\label{kreq6}
\prod_{i\in \alpha}g_i=\left(\prod_{i=1}^{N_V}g_i\right)\left(\prod_{i\in \beta}g_i\right)\in gVV\subset gU\,. 
\end{equation}
Consequently for an arbitrary $U\in \mathcal N(G)$ we have found a $\alpha_U \in \mathfrak F(\mathbb N)$ such that (\ref{kreq6}) is satisfied for every 
$\alpha\in  \mathfrak F(\mathbb N)$ with $\alpha\supset \alpha_U$. Therefore,   $(g_i)_{i\in \mathbb N}$ is multipliable to $g$.
\end{proof}
\begin{lemma}\label{alo5}
Let $(g_i)_{i\in \mathbb N}$ be a super-multipliable sequence of elements of $G$. Then  $(g_i)_{i\in \mathbb N}$ is pre-multipliable.
\end{lemma}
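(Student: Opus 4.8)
The plan is to argue by contradiction, turning the negation of pre-multipliability into a single $\{0,1\}$-valued multiplier that witnesses the failure of super-multipliability. So I would suppose that $(g_i)_{i\in\mathbb N}$ is super-multipliable but \emph{not} pre-multipliable. Unwinding the definition, the latter yields a fixed $U_0\in\mathcal N(G)$ with the property that for \emph{every} $F\in\mathfrak F(\mathbb N)$ there exists some $\beta\in\mathfrak F(\mathbb N)$ with $\beta\cap F=\emptyset$ and $\prod_{i\in\beta}g_i\notin U_0$.

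Next I would produce, by induction, finite blocks $\beta_1,\beta_2,\dots$ that are pairwise disjoint and separated in the sense $\max\beta_j<\min\beta_{j+1}$, each satisfying $\prod_{i\in\beta_j}g_i\notin U_0$: once $\beta_1,\dots,\beta_j$ are chosen, apply the negation above with $F=\{1,\dots,\max\beta_j\}$ to obtain the next block $\beta_{j+1}$. Then I set $S:=\bigcup_j\beta_j$ and let $(m_n)_{n\in\mathbb N}$ be its indicator, i.e. $m_n=1$ for $n\in S$ and $m_n=0$ otherwise. Since $(g_i)_{i\in\mathbb N}$ is super-multipliable, the partial products $P_n:=\prod_{k=1}^n g_k^{m_k}=\prod_{k\le n,\,k\in S}g_k$ converge to some $g\in G$.

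Finally I would extract the contradiction from Cauchyness. A convergent sequence in the Hausdorff group $G$ is Cauchy, so choosing a symmetric $V\in\mathcal N(G)$ with $VV\subseteq U_0$ there is an $N$ such that $P_n^{-1}P_{n'}\in U_0$ whenever $n,n'\ge N$. For $j$ large enough that $a_j:=\min\beta_j-1\ge N$, put $b_j:=\max\beta_j$; because the blocks are ordered and disjoint one has $S\cap\{a_j+1,\dots,b_j\}=\beta_j$ exactly, and since $G$ is abelian $P_{a_j}^{-1}P_{b_j}=\prod_{i\in\beta_j}g_i$, which lies outside $U_0$ by construction. This contradicts the Cauchy estimate, proving that $(g_i)_{i\in\mathbb N}$ must be pre-multipliable.

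The two points requiring care, rather than genuine obstacles, are: guaranteeing the gap condition $\max\beta_j<\min\beta_{j+1}$ so that $S\cap\{a_j+1,\dots,b_j\}$ is \emph{precisely} $\beta_j$ (this is exactly what the choice $F=\{1,\dots,\max\beta_j\}$ secures), and passing from convergence of $(P_n)_{n\in\mathbb N}$ to the uniform Cauchy estimate for the single prescribed $U_0$, which is handled by the standard symmetric-square trick $VV\subseteq U_0$ together with the abelianness of $G$ used to identify $P_{a_j}^{-1}P_{b_j}$ with $\prod_{i\in\beta_j}g_i$.
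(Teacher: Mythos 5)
Your proposal is correct and follows essentially the same route as the paper's own proof: negate pre-multipliability to get a fixed $U_0$, inductively extract pairwise disjoint, increasingly separated blocks $\beta_j$ with $\prod_{i\in\beta_j}g_i\notin U_0$, take the $\{0,1\}$-valued indicator of their union as the multiplier, and observe that the resulting partial products cannot be Cauchy, contradicting super-multipliability. The only cosmetic difference is that you phrase it as a contradiction with an explicit $VV\subseteq U_0$ Cauchy estimate, while the paper states the contrapositive and simply notes the partial products fail to be Cauchy.
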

\begin{proof}
Let us assume that $(g_i)_{i\in \mathbb N}$ is not pre-multipliable and derive from this that then $(g_i)_{i\in \mathbb N}$ is not  a super-multipliable sequence.
\par
Since $(g_i)_{i\in \mathbb N}$ is not pre-multipliable, we can find and fix a $U\in \mathcal N(G)$ such that 
\begin{equation}\label{aloeq1}
\forall n\in \mathbb N\,\,\exists \beta  \in \mathfrak F(\mathbb N),\,\,\,\,\beta\cap \{1,\dots,n\}=\emptyset,\,\,\,\,\, \prod_{i\in \beta}g_i\not \in U\,.
\end{equation}
Using (\ref{aloeq1}) we can construct a sequence $(\beta_n)_{n\in \mathbb N}$ of elements of $\mathfrak F(\mathbb N)$ such that
\begin{equation}\label{aloeq2}
1<\min \beta_n,\, \max\beta_n<\min \beta_{n+1},\,\,\prod_{i\in \beta_n}g_i\not \in U,\,\,\,n=1,2,\dots
\end{equation}
Define now a sequence $m:\mathbb N\to \{0,1\}$ as follows:\\
(1) If $i\in \mathbb N$ is such that $i\in \beta_n$ for some $n\in \mathbb N$, then $m_i=1$.\\
(2) If $i\in \mathbb N\setminus \cup_{n\in \mathbb N}\beta_n$, then $m_i=0$.
\par
Clearly (\ref{aloeq2}) implies:
\begin{equation}\label{aloeq3}
\prod _{i=\min \beta_n}^{\max \beta_n}g_i^{m_i}=\prod_{i\in \beta_n}g_i\not \in U,\,\,\,n=1,2,\dots
\end{equation}
From (\ref{aloeq3}), as $\min\beta_n\to \infty$, we conclude that $(\prod _{i=1}^{\max \beta_n}g_i^{m_i})_{n\in \mathbb N}$ is not a Cauchy sequence. Hence, 
$(\prod _{i=1}^{\max \beta_n}g_i^{m_i})_{n\in \mathbb N}$ is not a converging sequence. Consequently, 
$$\left(\prod _{i=1}^{n}g_i^{m_i}\right)_{n\in \mathbb N}$$
 is not a converging sequence either.
\par
Therefore we have found a sequence $m:\mathbb N\to \{0,1\}$ such that $(\prod _{i=1}^{n}g_i^{m_i})_{n\in \mathbb N}$ is not a converging sequence. This means that 
$(g_i)_{i\in \mathbb N}$ is not  a super-multipliable sequence.
\end{proof}
\begin{remark}{\em The method of proof of Lemma \ref{alo5} is seemingly due to \cite{hild}.  
}
\end{remark}
{\it Proof of Lemma \ref{vigo2}$(a)$.}\\
Since $(g_i)_{i\in \mathbb N}$ is a super-multipliable sequence of elements of $G$, the sequence  $(\Pi_{i=1}^ng_{i})_{n\in \mathbb N}$ converges to to some  $g\in G$. By Lemma \ref{alo5},  $(g_i)_{i\in \mathbb N}$  is pre-multipliable. By Lemma \ref{alo4} $(g_i)_{i\in \mathbb N}$ is multipliable to $g$. Hence, by  Lemma \ref{alo2} for every bijection $\pi:\mathbb N\to \mathbb N$ the sequence $(\Pi_{i=1}^ng_{\pi(i)})_{n\in \mathbb N}$ converges to $g$. $\square$
\par

 {\bf Acknowledgements.} 
 We thank to professor Dmitri Shakhmatov for sending \cite{SS} to us  and to professor Dikran Dikranjan for pointing out a gap in the proof of Theorem \ref{shakh3} in the initial version  of this paper.
 \par
   The paper was written
while the second author was visiting   (August and September, 2009)  Departamento de M\'etodos Matem\'aticos y de Representaci\'on  (University of A Coru\~na, Spain). He is grateful for the invitation and good conditions of work.

\vspace{.5cm}

\noindent X. Dominguez,\\
 Dept. de M{\'e}todos Matem{\'a}ticos y de Representaci{\'o}n \\
 Universidad de A Coru{\~n}a, Campus de Elvi{\~n}a.
E-15071 A
Coru{\~n}a, Spain\\
e-mail: xdominguez@udc.es\\
\\
V. Tarieladze,\\
Muskhelishvili Institute of Computational Mathematics. \\
 0171, Tbilisi,  Georgia\\
e-mail: vajatarieladze@yahoo.com\\

\end{document}